\newtheorem{introtheorem}{Theorem}
\newtheorem{introprop}{Proposition}
\newtheorem{theorem}{Theorem}[section]
\newtheorem{corollary}[theorem]{Corollary}
\newtheorem{proposition}[theorem]{Proposition}
\newtheorem{lemma}[theorem]{Lemma}
\theoremstyle{remark}
\theoremstyle{definition}
\newtheorem{definition}[theorem]{Definition}
\newtheorem{example}[theorem]{Example}
\newtheorem{operation}[theorem]{Operation}
\title{Asymptotic and Assouad-Nagata dimension of finitely generated groups and their subgroups}
\author{Levi Sledd}
\newcommand{\llangle}{\langle \langle}
\newcommand{\rrangle}{\rangle \rangle}
\newcommand{\bd}{\partial}
\DeclareMathOperator{\Ker}{Ker}
\DeclareMathOperator{\Lab}{Lab}
\DeclareMathOperator{\asdim}{asdim}
\DeclareMathOperator{\asdimAN}{asdim_{\textnormal{AN}}}
\DeclareMathOperator{\dimAN}{dim_{\textnormal{AN}}}
\DeclareMathOperator{\diam}{diam}
\DeclareMathOperator{\PS}{PS}
\DeclareMathOperator{\supp}{supp}
\begin{document}
\maketitle

\begin{abstract}
We prove that for all $k,m,n \in \mathbb N \cup \{\infty\}$ with $4 \leq k \leq m \leq n$, there exists a finitely generated group $G$ with a finitely generated subgroup $H$ such that $\asdim(G) = k$, $\asdimAN(G) = m$, and $\asdimAN(H)=n$.  This simultaneously answers two open questions in asymptotic dimension theory.
\end{abstract}

\section{Introduction}
This is the second in a series of two papers on asymptotic dimension and Assouad-Nagata dimension of finitely generated groups: the first is \cite{Sledd}.  Asymptotic dimension ($\asdim$) and asymptotic Assouad-Nagata dimension ($\asdimAN$) are two distinct but related ways of defining the large-scale dimension of a metric space.  Each is invariant under quasi-isometry, and thus can be considered as an invariant of finitely generated groups.  For countable groups with proper left-invariant metrics, \emph{asymptotic} Assouad-Nagata dimension is equivalent to Assouad-Nagata dimension ($\dimAN$).  So from now on we use this shorter term when talking about groups, although we continue to denote it by $\asdimAN$.

Given a way of defining `dimension' for an algebraic structure, it is natural to ask whether it is monotonic with respect to substructures: that is, whether $A \leqslant B$ implies that the dimension of $A$ is no greater than the dimension of $B$.  Is our dimension like that of a vector space, where this natural monotonicity holds, or is it like the rank of a free group, where it fails spectacularly?  Since $\asdim$ is actually a \textit{coarse} invariant, it follows that $\asdim$ is well defined for all countable groups, and if $G$ is a countable group and $H \leqslant G$, then $\asdim(H) \leq \asdim(G)$. In this paper we show that Assouad-Nagata dimension behaves quite differently.  Namely, we prove the following theorem.

\begin{introtheorem}\label{IntroThm1}
For any $k,m,n \in \mathbb N \cup \{\infty\}$ with $4 \leq k \leq m \leq n$, there exist finitely generated, recursively presented groups $G$ and $H$ with $H \leqslant G$, such that
\begin{align*}
\asdim(G) &= k\\
\asdimAN(G) &= m\\
\asdimAN(H) &= n \, .
\end{align*}
\end{introtheorem}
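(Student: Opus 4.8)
The plan is to realize $H$ as a \emph{heavily distorted} finitely generated subgroup of $G$ and to assemble both groups out of the building blocks produced in the first paper \cite{Sledd}. Distortion is forced on us: since $\asdim$ is a coarse invariant, $\asdim(H)\le\asdim(G)=k$; and the restriction to $H$ of a word metric $d_G$ on $G$ makes $(H,d_G|_H)$ a metric subspace of $G$, so $\asdimAN(H,d_G|_H)\le\asdimAN(G)=m$. Hence when $n>m$ the inclusion $H\hookrightarrow G$ cannot be a quasi-isometric embedding --- it must compress the intrinsic metric of $H$ --- and the whole construction is designed so that this compression annihilates exactly the scales responsible for $\asdimAN(H)=n$ while leaving $\asdim(G)$ and $\asdimAN(G)$ at their prescribed values.

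Concretely, I would start from \cite{Sledd} with a finitely generated, recursively presented group $H$ satisfying $\asdimAN(H)=n$ and $\asdim(H)=k'\le k$, where in addition the bound $\asdim(H)\le k'$ is witnessed by covers whose mesh grows only \emph{polynomially} in the scale (this quantitative refinement of asymptotic dimension should be read off the same construction). I would then embed $H$, distorted at least exponentially, into a finitely generated recursively presented group --- as a vertex subgroup of a suitable ascending HNN extension with small associated subgroups, or via a graph-of-groups device in the spirit of how $\langle a\rangle$ is exponentially distorted in $\mathrm{BS}(1,2)$. In such an ambient group the image $(H,d_G|_H)$ is, up to coarse equivalence, the intrinsic space $(H,d_H)$ \emph{logarithmically reparametrized}: a $d_H$-ball of radius $\rho$ lies in a $d_G$-ball of radius $O(\log\rho)$. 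Under this reparametrization a cover colored into $(k'+1)$ many $\rho$-disjoint families of mesh $\le\rho^{c}$ becomes a coloring into $(k'+1)$ many $\Theta(\log\rho)$-disjoint families of mesh $O(c\log\rho)$ --- that is, \emph{polynomial mesh becomes linear mesh}. Therefore $\asdimAN(H,d_G|_H)\le k'\le m$, so the distorted copy of $H$ does not obstruct $\asdimAN(G)=m$; meanwhile $\asdimAN(H)=n$ is intrinsic to $(H,d_H)$ and is unaffected by how $H$ sits inside $G$.

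It then remains to correct the ambient group's dimensions to the prescribed values $\asdim(G)=k$ and $\asdimAN(G)=m$. Each operation above (ascending HNN extension, amalgamation over a small subgroup, finite direct products) perturbs $\asdim$ and $\asdimAN$ in a controlled way governed by Hurewicz-type permanence theorems (as in \cite{Sledd} and the work of Brodskiy--Dydak--Higes--Mitra); after the distortion step I would amalgamate with, or multiply by, further building blocks of \cite{Sledd} realizing the appropriate $(\asdim,\asdimAN)$ pairs so that the final $G$ has $\asdim(G)=k$, $\asdimAN(G)=m$, and still contains the exponentially distorted copy of $H$ (which stays distorted because factors of an amalgam and of a direct product are undistorted). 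Finite generation persists because each step is an HNN/amalgam/finite product; recursive presentability persists because every group and every defining map in sight is algorithmic; the degenerate cases ($k=m$, or $m=n$, or any of $k,m,n=\infty$) are handled by skipping the corresponding step or passing to a suitable direct limit.

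The main obstacle is getting all three invariants exactly right \emph{at once}. Upper bounds follow fairly mechanically from the permanence theorems, but $\asdim(G)\ge k$, $\asdimAN(G)\ge m$ (not accidentally smaller), and $\asdimAN(H)\ge n$ each require a matching lower bound that must be shown to survive every stage of the assembly; this typically means exhibiting explicit ``thick'' subsets (quasi-flats, or the quantitative obstructions of \cite{Sledd}, or a Higson-corona/coarse-cohomology argument) and checking they are not collapsed by the HNN and amalgam operations. The other delicate point is the distortion estimate: the embedding must be \emph{uniformly} at least exponentially distorting (indeed fast enough to linearize the mesh function of $H$ at every scale), which is why a black-box Higman-type embedding will not suffice and a hands-on choice of embedding is needed. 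The hypothesis $4\le k$ is presumably inherited from \cite{Sledd}: the gadgets that realize arbitrary Assouad--Nagata dimension already cost asymptotic dimension up to $4$.
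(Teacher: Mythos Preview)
Your proposal has a genuine gap in the central step, and it also misreads what \cite{Sledd} supplies.

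First, \cite{Sledd} does not manufacture finitely generated groups with prescribed pairs $(\asdim,\asdimAN)$; its role here is only the single estimate $\asdimAN(H)\le 2$ for finitely generated $C'(\sfrac{1}{6})$ groups (\cref{C'(1/6)Lemma}). So the ``building blocks produced in the first paper'' that you plan to start from and later amalgamate with do not exist.

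Second, and more seriously, the distortion heuristic is wrong. Exponential distortion of $H$ in $G$ gives only
\[
\log d_H(h_1,h_2)\ \lesssim\ d_G(h_1,h_2)\ \le\ d_H(h_1,h_2),
\]
not $d_G|_H\asymp \log d_H$. The upper inequality $d_G\le d_H$ is the trivial Lipschitz bound; the lower one is the distortion bound, and it is only a \emph{lower} bound. So a set of $d_H$-diameter $\rho^{c}$ need not have $d_G$-diameter $O(c\log\rho)$; in general you can say nothing better than $d_G$-diameter $\le\rho^{c}$. Hence ``polynomial mesh becomes linear mesh'' does not follow, and you have no control on $\asdimAN(H,d_G|_H)$. (There is also no reason the $k'$-dimensional control function of $H$ should be polynomial in the first place.) Finally, producing an exponentially distorting embedding of an \emph{arbitrary} finitely generated $H$ via an ascending HNN extension presupposes an injective endomorphism of $H$ with the right contraction properties, which you have not constructed.

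The paper proceeds quite differently. The distorted subgroup is not an arbitrary $H$ pushed through an HNN extension; rather, one fixes a single locally finite abelian group $K=\bigoplus_i\mathbb Z_{\ell_i}$ and two proper norms on it, chosen (via \cref{K_nConstruction}) so that $\asdimAN(K)=m$ under one and $\asdimAN(K)=n$ under the other. A small-cancellation lemma (\cref{quasiGeod}) is then used to realise each normed copy of $K$ \emph{bi-Lipschitzly} as the centre of a finitely generated central extension $A$ (resp.\ $B$) of a $C'(\lambda)$ group. The group $G$ is the amalgam $A*_{\phi}B$ identifying the two copies of $K$; the same small-cancellation lemma shows that in $G$ the amalgamated $K$ carries the \emph{smaller} norm, so $B$ is the distorted subgroup with $\asdimAN(B)\ge n+1$ intrinsically but contributing only $m$ to $\asdimAN(G)$. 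This yields the inequalities of \cref{mainProp}; the exact values $k,m,n$ are then pinned down a posteriori by taking a direct product with $\mathbb Z$ or $\mathbb Z^2$ (Morita theorem for $\asdimAN$) and a free product with $\mathbb Z^k$ (free product formulas). The lower bound $k\ge 4$ arises precisely because \cref{mainProp} only determines $\asdim$ and $\asdimAN$ up to an additive error of $1$, and absorbing that error via Morita and the free product formula costs $2+2$ in $\asdim$.
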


In \cite{Higes2}, Higes constructs an infinitely generated, locally finite abelian group, and a proper left-invariant metric with respect to which the group has asymptotic dimension 0 but infinite Assouad-Nagata dimension.  In \cite{Brodskiy_Dydak_Lang}, Brodskiy, Dydak, and Lang construct finitely generated groups with a similar gap, showing that (for example) $\mathbb Z_2 \wr \mathbb Z^2$ has asymptotic dimension 2 but infinite Assouad-Nagata dimension.  Previously it was not known whether a finitely generated group $G$ could satisfy $\asdim(G) < \asdim_{AN}(G) < \infty$ (Question (2) of \cite{Higes2}), nor was it known whether a finitely generated group could contain a finitely generated subgroup of greater Assouad-Nagata dimension (Questions 8.6 and 8.7 of \cite{Brodskiy_etal}).  With \cref{IntroThm1}, we show that both these things are possible.

If $H \leqslant G$ but $\asdimAN(H)  > \asdimAN(G)$, it must be that $H$ is distorted in $G$, and that this distortion collapses $H$ to a space of lesser Assouad-Nagata dimension in $G$.  However, distortion does not always affect the Assouad-Nagata dimension of the distorted subgroup.  For example, in $BS(1,2) =\langle a, b \mid b^{-1}aba^{-2} \rangle$, the subgroup $\langle a \rangle$ is distorted, but still has Assouad-Nagata dimension $1$.  We call distortion which affects Assouad-Nagata dimension \emph{Assouad-Nagata dimension distortion}.  The author hopes that Assouad-Nagata dimension distortion will be an interesting phenomenon to study in its own right, and that more examples can be found in nature.

The paper is organized as follows.  In Section 1, we fix countable group $K$, constructed as a direct sum of cyclic groups of increasing order.  We then show that for each $m, n \in \mathbb N \cup \{\infty\}$ with $m<n$, there are two different proper left-invariant metrics on $K$ such that $\asdimAN(K) = m$ with respect to one, and $\asdimAN(K) = n$ with respect to the other.  

In Section 2, we use techniques from small cancellation theory to establish a highly technical lemma.  This lemma allows us to quasi-isometrically embed $K$, with respect to each proper left-invariant metric, into a finitely generated group.  

In Section 3, we embed $K$ into finitely generated groups $A$ and $B$.  This is done is such a way that, calling $K_A$ the copy of $K$ in $A$ and $K_B$ the copy of $K$ in $B$, we have that $\asdimAN(K_A) = m$ and $\asdimAN(K_B) = n$.  We then identify the two with an isomorphism $\phi : K_A \to K_B$, and let $G = A *_\phi B$.  Our technical small cancellation lemma comes back to help us a second time by showing that $\phi$ `crushes' the image of $K_B$ in $G$ to the size of $K_A$.  With a few calculations using well-known extension theorems for Assouad-Nagata dimension, we are able to prove the following.
\begin{introprop}\label{introProp}
For any $m, n \in \mathbb N \cup \{\infty\}$ with $m < n$, there exists a group $G = A *_\phi B$ where $G$, $A$, and $B$ are finitely generated and recursively presented, such that
\begin{align*}
1 &\leq \asdim(G) \leq 2\\
m+1 &\leq \asdimAN(G) \leq m+2\\
n+1 &\leq \asdimAN(B) \leq n+2 \, .
\end{align*}
\end{introprop}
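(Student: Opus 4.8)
The plan is to read off from the constructions of Sections 2--3 the relevant coarse-geometric properties of $G$, $A$, $B$ and their common subgroup, and then feed these into the standard extension theorems for asymptotic and Assouad-Nagata dimension. First I would fix notation: write $C = K_A = K_B \leqslant G$ for the amalgamated subgroup (isomorphic to $K$), and let $d_m, d_n$ be the two proper left-invariant metrics on $K$ from Section 1, so that $C$ carries $d_m$ as a subgroup of $A$ and $d_n$ as a subgroup of $B$, with $d_m \lesssim d_n$ up to quasi-isometry. Two preliminary observations will carry most of the weight. \emph{(i)} Since $K$ is locally finite, $\asdim(K,d') = 0$ for \emph{every} proper left-invariant metric $d'$ on $K$: the subgroup generated by the finitely many elements of $d'$-length at most $r$ is finite, and its cosets form an $r$-separated, uniformly bounded cover. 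In particular $\asdim(C, d_G) = 0$, where $d_G$ denotes the metric induced from $G$. \emph{(ii)} The factor $A$ is undistorted in $G = A *_\phi B$, i.e.\ $A \hookrightarrow G$ is a quasi-isometric embedding: because $C$ carries in $A$ a metric ($d_m$) dominated up to quasi-isometry by the one ($d_n$) it carries in $B$, no syllable from $B$ can shorten an element of $A$.

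For the asymptotic dimension of $G$, the lower bound $\asdim G \geq 1$ is immediate, as $G$ is infinite and finitely generated. For the upper bound I would apply the Bell--Dranishnikov estimate for amalgamated products in the form $\asdim G \leq \max\{\asdim(A,d_G),\, \asdim(B,d_G),\, \asdim(C,d_G)+1\}$, with each factor carrying the metric induced from $G$ (the familiar version with intrinsic metrics presupposes a finitely generated edge group, which fails here since $C \cong K$). By \emph{(ii)}, $\asdim(A,d_G) = \asdim A \leq 2$ from the construction of $A$; the construction of $B$ and the analysis of its image inside $G$ (Section 3), using the crushing lemma and $\asdim K = 0$, give $\asdim(B,d_G) \leq 2$; and $\asdim(C,d_G)+1 = 1$. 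Hence $\asdim G \leq 2$.

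For $\asdimAN(B)$: the subgroup $K_B$ is quasi-isometrically embedded in $B$ with $\asdimAN(K_B) = n$, so monotonicity of $\asdimAN$ under quasi-isometric embeddings gives $\asdimAN(B) \geq n$; the ``coning'' by which the Section 2 construction produces $B$ from $(K,d_n)$ raises this to $\geq n+1$, while the Assouad-Nagata extension theorems bound the coning's effect from above by an additive $2$, so $\asdimAN(B) \leq \asdimAN(K,d_n)+2 = n+2$. For the lower bound on $\asdimAN(G)$, by \emph{(ii)} once more $\asdimAN(G) \geq \asdimAN(A) \geq m+1$, the last step again coming from the construction of $A$ (the same coning phenomenon).

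The real obstacle is the upper bound $\asdimAN(G) \leq m+2$: run naively with intrinsic metrics, the amalgam estimate only yields $\asdimAN(G) \lesssim n$, which is worthless once $n \gg m$. Here I would invoke the technical small-cancellation lemma of Section 2 a second time: applied to $\phi$, it shows that $K_B$ is \emph{crushed} inside $G$, i.e.\ the metric induced from $G$ on $C = K_B$ is quasi-isometric to $d_m$, so that $\asdimAN(C,d_G) = m$. Then the Assouad-Nagata analogue of the Bell--Dranishnikov amalgam estimate, again with induced metrics, gives $\asdimAN(G) \leq \max\{\asdimAN(A,d_G),\, \asdimAN(B,d_G),\, \asdimAN(C,d_G)+1\}$; here $\asdimAN(A,d_G) = \asdimAN(A) \leq m+2$ by \emph{(ii)}, $\asdimAN(C,d_G)+1 = m+1$, and $\asdimAN(B,d_G) \leq \asdimAN(C,d_G)+2 = m+2$ by running the same Assouad-Nagata extension estimate that bounded $\asdimAN(B)$, but now over the crushed copy $(C,d_G) \simeq (K,d_m)$ in place of $(K,d_n)$. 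Therefore $\asdimAN(G) \leq m+2$, and the remaining ``few calculations'' are exactly these. The two delicate inputs, both supplied by Section 2, are the crushing itself and the (more elementary) fact that $A$ is genuinely undistorted in $G$.
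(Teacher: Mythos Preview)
Your route diverges from the paper's in a way that creates real difficulties. You organize the upper bounds around an amalgam estimate of the shape
\[
\asdimAN(G) \leq \max\{\asdimAN(A,d_G),\ \asdimAN(B,d_G),\ \asdimAN(C,d_G)+1\}
\]
with \emph{induced} metrics on the vertex groups and an infinitely generated edge group---and you concede yourself that this is not the standard statement. Even granting such a formula, the step $\asdimAN(B,d_G) \leq m+2$ is not justified: \cref{extensionThm} is formulated for word metrics, and $(B,d_G)$ carries none, so you would need the full metric Hurewicz theorem together with a check that $\pi_B\colon (B,d_G)\to H_B$ is Lipschitz with linearly controlled fibers. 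Your lower bound $\asdimAN(G)\geq m+1$ via undistortion of $A$ also presupposes $\asdimAN(A)\geq m+1$, which the paper establishes only conditionally (\cref{ABBounds} needs the analogue for $A$ of condition~(b), and the general setup in Section~3 does not require it).

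The paper avoids all of this with one structural observation you have missed: because the amalgamated subgroup is \emph{central} in both factors, $G=A*_\phi B$ is itself a central extension
\[
1 \longrightarrow K \longrightarrow G \longrightarrow H \longrightarrow 1,
\]
where $K=\langle \bar u_i : i\in\mathbb N\rangle$ and $H=G/K\cong H_A*H_B=\langle S_A\sqcup S_B\mid U_A\sqcup V_B\rangle$. Since $U_A$ and $V_B$ are $C'(\lambda)$ languages over disjoint alphabets, their union is again $C'(\lambda)$, so $\asdimAN(H)\leq 2$ by \cref{C'(1/6)Lemma}. A single application of \cref{quasiGeod} with $U=U_A$, $V=V_B$, $\tilde u=b^h$ (for $b\in S_B$) shows that $\langle \bar b, K\rangle$ with the $G$-metric is bi-Lipschitz to $\mathbb Z\times K_m$; this simultaneously gives the lower bound $\asdimAN(G)\geq m+1$ and (restricting to $K$) the ``crushing'' $\asdimAN(K,d_G)=m$. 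One use of \cref{extensionThm} on the displayed sequence then yields $\asdim(G)\leq 2$ and $\asdimAN(G)\leq m+2$ directly---no amalgam formula, no induced-metric vertex groups, and no separate analysis of $(B,d_G)$. What you call ``invoking the small-cancellation lemma a second time'' is exactly this global application to $G$, but feeding it into the extension theorem for $G$ rather than into an amalgam bound is what makes the argument go through.
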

Using the free product formulas for asymptotic and Assouad-Nagata dimension and the Morita theorem for Assouad-Nagata dimension, it is then easy to derive \cref{IntroThm1} from \cref{introProp}.  

There are many technical restrictions placed on the presentations of $A$ and $B$ from \cref{introProp}.  In Section 4 we give explicit presentations where these conditions are satisfied.  Curiously, although we are able to give an explicit presentation of a group $G$ satisfying the conditions of \cref{introProp}, we are not quite able to do the same for \cref{IntroThm1}.  However, we can explicitly give presentations of two groups, one of which must be a group satisfying the conclusion of \cref{IntroThm1}.

\section{Adapting a construction of Higes}

We refer the reader to \cite{Sledd}, Section 1 for basic conventions and notation regarding metric spaces, as well as definitions of the terms asymptotic dimension ($\asdim$), asymptotic Assouad-Nagata dimension ($\asdimAN$), and control function.  We assume that the reader is familiar with the notions of quasi-isometry and bi-Lipschitz equivalence.  Occasionally we will mention terms such as `coarse' map/embedding/equivalence.  Since we will not need to work with these directly, we do not give a definition here, but one may be found in any text on coarse geometry, for example \cite[pp. 9]{Roe}.  What matters to us is that, if $X$ and $Y$ are metric spaces which are coarsely equivalent, then $\asdim(X) = \asdim(Y)$, and that a quasi-isometry or bi-Lipschitz map is a special case of a coarse equivalence.

In this paper, we adopt the convention that the Cartesian product of two metric spaces is always endowed with the $\ell^1$ product metric.  That is, if $X$ and $Y$ are metric spaces, then $X \times Y$ is equipped with the metric defined by
$$d_{X \times Y}((x,y),(x',y')) = d_X(x,x')+d_Y(y,y')$$
for all $x,x' \in X$ and $y,y' \in Y$.  With this convention in mind, if $\sim$ stands for either ``is coarsely equivalent to," ``is quasi-isometric to," or ``is bi-Lipschitz equivalent to," then we have that $X \sim X'$ and $Y \sim Y'$ implies $X \times Y \sim X' \times Y'$.  In addition, $\asdim$ and $\asdimAN$ are subadditive with respect to taking direct products, in a sense that is made precise by the following two theorems.  We will use them often throughout this paper.

\begin{lemma}\cite{Bell_Dranishnikov, Brodskiy_etal}\label{subadditivity}
Let $X, Y$ be metric spaces.  Then
\begin{align*}
\asdim(X \times Y) &\leq \asdim(X) + \asdim(Y)\\
\asdimAN(X \times Y) &\leq \asdimAN(X) + \asdimAN(Y).
\end{align*}
\end{lemma}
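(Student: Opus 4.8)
This is a standard fact: the statement for $\asdim$ is the Bell--Dranishnikov product theorem, and the $\asdimAN$ refinement is due to Brodskiy, Dydak, Levin, and Mitra, so rather than reproving it in detail I will describe the proof I would give. The plan is to use the characterization of both dimensions by Lipschitz maps into simplicial complexes. Recall that $\asdim(X) \le n$ if and only if for every $\varepsilon > 0$ there is a simplicial complex $K$ of dimension at most $n$, carrying its $\ell^1$ (barycentric-coordinate) metric, together with an $\varepsilon$-Lipschitz map $f \colon X \to K$ that is uniformly cobounded --- preimages of simplices have diameter at most some $S_X(\varepsilon)$ --- and that $\asdimAN(X) \le n$ if and only if, in addition, $S_X$ can be taken to grow at most linearly in $1/\varepsilon$. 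The analogous characterizations hold for $Y$.

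First I would fix $\varepsilon > 0$, choose $f \colon X \to K$ and $g \colon Y \to L$ as above with $\dim K \le \asdim X$ and $\dim L \le \asdim Y$, and consider $f \times g \colon X \times Y \to K \times L$, where $X \times Y$ carries the $\ell^1$ product metric by our standing convention. The key geometric input is that the polytope $K \times L$ admits a re-triangulation $K \boxtimes L$ --- the usual ``staircase'' triangulation of a product of simplices --- which is a simplicial complex of dimension $\dim K + \dim L$, and the identity map from $K \times L$ with the $\ell^1$ product metric to $K \boxtimes L$ with its barycentric metric is bi-Lipschitz with a constant $C = C(\dim K, \dim L)$ depending only on the dimensions. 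Granting this, $f \times g$ (post-composed with the identity) is a $C\varepsilon$-Lipschitz map from $X \times Y$ into an $(\asdim X + \asdim Y)$-dimensional complex, and since every simplex of $K \boxtimes L$ lies in a product simplex $\sigma \times \tau$, its preimage sits inside $f^{-1}(\sigma) \times g^{-1}(\tau)$ and hence has diameter at most $S_X(\varepsilon) + S_Y(\varepsilon)$. As $\varepsilon$ was arbitrary (apply the above with $\varepsilon/C$ to hit a prescribed scale), this gives $\asdim(X \times Y) \le \asdim(X) + \asdim(Y)$; and because $S_X(\varepsilon) + S_Y(\varepsilon)$ is linear in $1/\varepsilon$ whenever both $S_X$ and $S_Y$ are, the identical argument yields the $\asdimAN$ inequality.

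The hard part will be exactly that bi-Lipschitz comparison between the $\ell^1$ product metric on $K \times L$ and the intrinsic metric on its staircase re-triangulation $K \boxtimes L$: one must check that the distortion constant is uniform, which reduces --- using that there are only finitely many combinatorial types of $\Delta^p \times \Delta^q$ with $p \le \asdim X$ and $q \le \asdim Y$ --- to comparing two fixed norms on each fixed finite-dimensional polytope, but one should be careful to first dispose of the case where either factor has infinite asymptotic (respectively Assouad--Nagata) dimension, where the assertion is vacuous. I would also note that the seemingly more elementary route via colored uniformly bounded covers is actually more delicate here: taking all products of cover pieces and coloring a piece from the $i$-th family of $X$ and the $j$-th family of $Y$ by $i+j$ does \emph{not} yield $R$-separated families (two such pieces can overlap in both coordinates), and repairing this requires the Bell--Dranishnikov finite union theorem, itself a substantial result --- so the Lipschitz-map argument is the cleaner one to present.
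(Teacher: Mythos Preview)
The paper does not prove this lemma: it is stated with citations to \cite{Bell_Dranishnikov} and \cite{Brodskiy_etal} and used as a black box, so there is nothing to compare your argument against. Your sketch is a reasonable outline of one standard route to the cited result, via the $\varepsilon$-Lipschitz-map-to-polyhedra characterization and the staircase triangulation of $K \times L$; the observation that the naive product-of-covers coloring fails without further work is also correct. Since the paper only quotes the result, supplying a proof here goes beyond what is required.
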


\subsection{Normed groups}

We denote the identity element of an arbitrary group by 1, and of an abelian group by 0.  Let $G$ be a group.  A \emph{norm} on $G$ is a function $\|\cdot\|: G \to \mathbb R_0^+$ such that, for all $g, h \in G$,
\begin{itemize}
\item $\|g\| = 0$ if and only if $g=1$.
\item $\|g\| = \|g^{-1}\|$.
\item $\|gh\| \leq \|g\|+\|h\|$.
\end{itemize}
Some authors call this a length function or weight function on $G$.

A norm is \emph{proper} if $\{g \in G \mid \|g\| \leq N \}$ is finite for all $N \geq 0$.  There is a natural one-to-one correspondence between norms and left-invariant metrics, given by $d(g,h) = \|g^{-1}h\|$ and $\|g\| = d(1,g)$, and a left-invariant metric on a group is proper if and only if the corresponding norm is proper.  Every countable group admits a proper norm, and any two proper norms on the same countable group are coarsely equivalent \cite[Proposition 1.1]{Dranishnikov_Smith2}.  Thus $\asdim$ is an invariant of countable groups: in particular, if $G$ is a countable group and $H \leqslant G$, then $\asdim(H) \leq \asdim(G)$.  It is easy to show that a countable group has asymptotic dimension zero if and only if it is locally finite, a fact which we will use many times.

Formally, a normed group should be an ordered pair $(G, \|\cdot\|_G)$.  But from now on, whenever we say that $G$ is a \emph{normed} group, it is understood that $G$ is equipped with a norm, which is always called $\|\cdot\|_G$.  With this convention in mind we eliminate the norm from the notation wherever possible.

If $G$ is a normed group and $s$ is a positive real number, then the function $s\|\cdot\|_G: G \to \mathbb R_0^+, g \mapsto s\|g\|_G$ is also a norm on $G$.  We call the normed group $(G, s\|\cdot\|_G)$ a \emph{scaled normed group}, and denote it briefly by $sG$.

Given two normed groups $G_0$ and $G_1$ and scaling constants $s_0, s_1$, we define their \emph{scaled direct product} $s_0G_0 \times s_1G_1$ to be the group $G_0 \times G_1$ endowed with the norm $\|\cdot\|_{(s_0, s_1)}$ defined by
$$\|(g_0,g_1)\|_{(s_0,s_1)} = s_0\|g_0\|_{G_0} + s_1\|g_1\|_{G_1}$$
for all $g_0 \in G_0$ and $g_1 \in G_1$.  This is just the $\ell^1$ product norm on $s_0G_0 \times s_1G_1$.  For any $k \in \mathbb N$, we define the scaled direct product of finitely many scaled normed groups $\prod_{i=0}^k s_iG_i$ by iterating this construction.  Note that for finite direct products we have that $\prod_{i=0}^k s_iG_i$ is bi-Lipschitz equivalent to $\prod_{i=0}^k G_i$ without scaling.

To avoid frequently having to state that certain sets are nonempty, we declare $\prod_{i \in \emptyset} G_i$ to be the trivial group.  Let $I$ be a set and $(G_i)_{i \in I}$ an $I$-tuple of groups.  For $g = (g_i)_{i \in I} \in \prod_{i \in I} G_i$, we denote the \emph{support} of $g$ by $\supp(g)$; that is, $\supp(g) = \{i \in I \mid g_i \neq 1\}$.  By definition $\bigoplus_{i \in I} G_i$ is the subgroup of $\prod_{i \in I} G_i$ consisting of all $g \in \prod_{i \in I} G_i$ such that $\supp(g)$ is finite.  The notion of scaled direct products can then be extended to general direct sums in the following natural way. 

\begin{definition}
Let $I$ be a set, let $(G_i)_{i \in I}$ be an $I$-tuple of normed groups, and let $s = (s_i)_{i \in I}$ an $I$-tuple of scaling constants.  Let $G = \bigoplus_{i \in I} G_i$.  Then the \emph{scaled direct sum} $\bigoplus_{i \in I} s_iG_i$ is defined to be the normed group $(G, \|\cdot\|_s)$, where $\|\cdot\|_s$ is given by
$$\|g\|_s = \sum_{i \in I} s_i\|g_i\|_{G_i}$$
for all $g \in G$.  We call $\|\cdot\|_s$ the norm \emph{induced} by $s$.
\end{definition}

\begin{lemma}\label{WLOGIntegers}
Let $I$ be a set, $s=(s_i)_{i \in I}$ an $I$-tuple of scaling constants bounded away from zero.  Then $\bigoplus_{i \in I} s_iG_i$ is bi-Lipschitz equivalent to $\bigoplus_{i \in I} s_i'G_i$, where $s_i'$ is a positive integer for all $i \in I$.
\end{lemma}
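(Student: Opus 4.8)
The plan is to realize the claimed bi-Lipschitz equivalence by the identity map on the common underlying group $\bigoplus_{i \in I} G_i$, after replacing each scaling constant by its ceiling. So first I would use the hypothesis that $s$ is bounded away from zero to fix a constant $c$ with $0 < c \le 1$ and $s_i \ge c$ for all $i \in I$, and then set $s_i' = \lceil s_i \rceil$. Each $s_i'$ is a positive integer, since $s_i > 0$, so the tuple $s' = (s_i')_{i \in I}$ is of the required form.

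Next I would record the elementary chain of inequalities $s_i \le s_i' \le s_i + 1 \le (1 + 1/c)\, s_i$, where the last step is where — and the only place where — the lower bound $s_i \ge c$ gets used. Multiplying through by $\|g_i\|_{G_i}$ for a fixed $g = (g_i)_{i \in I} \in \bigoplus_{i \in I} G_i$ and summing over $i$ (a finite sum, since $g$ has finite support, so there are no convergence issues) yields
$$\|g\|_s \;\le\; \|g\|_{s'} \;\le\; (1 + 1/c)\,\|g\|_s .$$
Since a norm $\|\cdot\|$ and the corresponding left-invariant metric $d(g,h) = \|g^{-1}h\|$ determine one another, this is precisely the statement that the identity map from $\bigoplus_{i \in I} s_i G_i$ to $\bigoplus_{i \in I} s_i' G_i$ is a bi-Lipschitz equivalence, with bi-Lipschitz constant $1 + 1/c$, which finishes the proof.

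I do not expect a serious obstacle here; the one subtlety worth flagging is that the boundedness away from zero is genuinely needed, and it is the only hypothesis used. If the $s_i$ were allowed to accumulate at $0$, then rounding each one up to the nearest integer could distort norms by an arbitrarily large factor, and the identity map (or indeed any map) would fail to be bi-Lipschitz in general. Bounding the $s_i$ below by $c$ is exactly what caps this distortion at the uniform factor $1 + 1/c$.
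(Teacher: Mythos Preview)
Your proof is correct and is essentially the same as the paper's: both take $s_i' = \lceil s_i \rceil$, use the lower bound $s_i \ge \varepsilon$ (your $c$) to get $s_i \le s_i' \le (1+1/\varepsilon)s_i$, and sum to obtain the bi-Lipschitz inequality $\|g\|_s \le \|g\|_{s'} \le (1+1/\varepsilon)\|g\|_s$.
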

\begin{proof}
Suppose that $\varepsilon > 0$ is such that $s_i \geq \varepsilon$ for all $i \in I$.  Let $s' = (s_i')_{i \in I} = (\lceil s_i \rceil)_{i \in I}$, and let $g = (g_i)_{i \in I} \in \bigoplus_{i \in I} G_i$.  Then clearly $\|g\|_s \leq \|g\|_{s'}$, and
$$\|g\|_{s'} = \sum_{i \in I} \lceil s_i \rceil \|g_i\|_{G_i} \leq \sum_{i \in I} \left( \tfrac{s_i+1}{s_i} \right) s_i\|g_i\|_{G_i} \leq \left(1+\tfrac{1}{\varepsilon}\right) \sum_{i \in I} s_i\|g_i\|_{G_i} = \left(1+\tfrac{1}{\varepsilon}\right)\|g\|_s \, .$$
\end{proof}

\subsection{A fixed group with varying norms}

The next set of lemmas deal specifically with direct sums of cyclic groups.  Here we assume that a cyclic group comes equipped with the natural norm, that is $\|x\|_{\mathbb Z_\ell} = \min(x, \ell-x)$ for all $x \in \mathbb Z_\ell$, and $\|x\|_{\mathbb Z} = |x|$ for all $x \in \mathbb Z$.  Unless otherwise noted, tuples are sequences indexed by $\mathbb N$, e.g. $(s_i)$ stands for $(s_i)_{i \in \mathbb N}$.

\begin{definition}\label{geodForm}
Let $(x_i) \in \bigoplus_{i \in \mathbb N} \mathbb Z_{\ell_i}$.  The \emph{geodesic form} of $(x_i)$ is the unique sequence of integers $(y_i)$ such that for all $i \in \mathbb N$,
\begin{itemize}
\item $y_i \equiv x_i \mod \ell_i \, \text{, and}$
\item $y_i \in \left\{-\left\lfloor \tfrac{\ell_i-1}{2}\right\rfloor, \ldots, -1,0,1, \ldots, \left\lfloor \tfrac{\ell_i}{2} \right\rfloor\right\}_{\, .}$
\end{itemize}
\end{definition}

Note that if $s=(s_i)$ is a sequence of scaling constants, $x = (x_i) \in \bigoplus_{i \in \mathbb N} s_i\mathbb Z_{\ell_i}$, and $(y_i)$ is the geodesic form of $x$, then we have
\begin{equation}\label{geodFormNorm}
\|x\|_s = \sum_{i \in \mathbb N} s_i|y_i| \, .
\end{equation}

\begin{definition}\label{cubeDef}
For $s \in \mathbb R^+$ and $k, n \in \mathbb N$, assume that the set $$s\{0, \ldots, k\}^n = \{0, s, 2s, \ldots, ks\}^n \subset \mathbb R^n$$
is equipped with the $\ell^1$ metric.  Then an \emph{expanded $n$-dimensional cube} is a space isometric to $s\{0, \ldots, k\}^n$ for some $s \geq 1$ and $k \in \mathbb N$.
\end{definition}

In accordance with \cref{cubeDef}, whenever $s$ is a scaling constant and $s \geq 1$, we call $s$ an \emph{expansion constant}.  Sequences of expanded cubes are useful for establishing lower bounds on the asymptotic Assouad-Nagata dimension of a metric space. 

\begin{lemma}\cite[Corollary 2.7]{Higes}\label{cubeLemma1}
Let $X$ be a metric space, $n \in \mathbb N$.  If $X$ contains a sequence of expanded $n$-dimensional cubes $s_j\{0, \ldots, k_j\}^n$ where $\lim_{j \to \infty} k_j = \infty$, then $\asdimAN(X) \geq n$.
\end{lemma}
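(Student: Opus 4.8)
The plan is to argue by contradiction. Suppose that $X$ contains expanded $n$-dimensional cubes $s_j\{0,\dots,k_j\}^n$ with $k_j\to\infty$ but $\asdimAN(X)\leq n-1$ (the case $n=0$ being trivial). From each large cube I will manufacture a closed cover of the \emph{solid} Euclidean cube $[0,k_j]^n$ whose multiplicity is at most $n$ and whose mesh is bounded by a constant independent of $j$; for $k_j$ large enough this contradicts Lebesgue's covering theorem, which encodes the fact that $\dim[0,k_j]^n=n$.

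First I would unwind $\asdimAN(X)\leq n-1$ in its colored form. For a general metric space the control function is linear only at large scales, so by using the cover at the threshold scale for all smaller scales we may take it to be affine: there are constants $\lambda\geq 1$ and $C\geq 0$ such that for every $s>0$ there is a cover $\mathcal{U}_s=\mathcal{U}_s^{(0)}\cup\cdots\cup\mathcal{U}_s^{(n-1)}$ of $X$ with each $\mathcal{U}_s^{(i)}$ being $s$-disjoint and $\operatorname{mesh}(\mathcal{U}_s)\leq\lambda s+C$. Now fix $j$, let $Y_j\subseteq X$ be a subset isometric to $s_j\{0,\dots,k_j\}^n$, apply this at scale $s=n s_j$, restrict $\mathcal{U}_s$ to $Y_j$, transport it across the isometry, and rescale the metric by $1/s_j$. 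This yields an $n$-colored cover $\mathcal{W}=\mathcal{W}^{(0)}\cup\cdots\cup\mathcal{W}^{(n-1)}$ of the integer grid $\{0,\dots,k_j\}^n\subset\mathbb{R}^n$ in which distinct same-color sets are at $\ell^1$-distance strictly greater than $n$, and every element has $\ell^1$-diameter at most $\lambda n + C/s_j \leq \lambda n + C =: D$; this last bound is exactly where the hypothesis $s_j\geq 1$ (``expanded'') is needed, namely to absorb the additive constant $C$ uniformly in $j$. We may also assume $\mathcal{W}$ is finite with nonempty elements.

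The heart of the proof — and the step I expect to be the main obstacle — is passing from this discrete colored cover to an honest closed cover of $[0,k_j]^n$ without inflating the multiplicity, since a naive thickening of the sets of $\mathcal{W}$ would create overlaps between different colors. The device is the choice of scale $n s_j$: if $p\in W$ and $p'\in W'$ are grid points with $W\neq W'$ in the same color, then $\|p-p'\|_1>n$, and since $p,p'$ have integer coordinates this forces $\|p-p'\|_\infty\geq 2$ (otherwise every coordinate difference lies in $\{-1,0,1\}$, giving $\|p-p'\|_1\leq n$). Consequently, thickening each $W\in\mathcal{W}$ to $W^{+}:=[0,k_j]^n\cap\bigcup_{p\in W}\overline{B}_{\ell^\infty}(p,\tfrac12)$, the sets $W^{+}$ within a fixed color stay pairwise disjoint, while $\{W^{+}:W\in\mathcal{W}\}$ is a finite closed cover of $[0,k_j]^n$ (every point of the solid cube lies within $\ell^\infty$-distance $\tfrac12$ of a grid point), of multiplicity at most $n$ (at most one set per color meets any point) and $\ell^1$-mesh at most $D+n$.

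Finally I would invoke Lebesgue's covering theorem. Since $k_j\to\infty$ and $D$ depends only on $n$, $\lambda$, $C$, pick $j$ with $k_j>D+n$; then every $W^{+}$ has $\ell^\infty$-diameter less than $k_j$, so no $W^{+}$ contains points of two opposite faces of $[0,k_j]^n$, and Lebesgue's theorem produces $n+1$ of the sets $W^{+}$ with a common point, contradicting multiplicity $\leq n$. This contradiction shows $\asdimAN(X)\geq n$. Apart from the scale choice in the thickening step and the bookkeeping that makes the mesh bound independent of $j$, the argument is just unwinding the definition of Assouad-Nagata dimension and quoting the classical covering theorem.
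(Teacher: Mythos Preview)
The paper does not prove this lemma at all: it is quoted verbatim as \cite[Corollary 2.7]{Higes} and used as a black box. So there is no ``paper's own proof'' to compare against; your proposal is a self-contained argument supplied where the paper simply cites the literature.

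Your argument is correct. The key idea---pulling back an $n$-colored Assouad--Nagata cover at scale proportional to $s_j$, rescaling so that the additive constant is absorbed (this is where $s_j\geq 1$ enters), and then thickening the resulting discrete cover of the integer grid to a closed cover of the solid cube with multiplicity $\leq n$---is clean, and the pigeonhole step (integer $\ell^1$-distance $>n$ forces $\ell^\infty$-distance $\geq 2$, so half-cube thickenings of same-color sets stay disjoint) is the right trick. Lebesgue's covering theorem then finishes it. One small point to watch: your claim ``strictly greater than $n$'' depends on the convention that $s$-disjoint means distance $>s$; if the definition in \cite{Sledd} uses $\geq s$, the thickening step would fail at borderline configurations like $p'=p+(1,\ldots,1)$, but this is repaired trivially by taking the cover at scale $(n+1)s_j$ instead of $ns_j$.

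For context, Higes' original route to this result goes through microscopic/asymptotic cones rather than a direct covering argument: one shows that the sequence of expanded cubes forces a cone of $X$ to contain a bi-Lipschitz copy of $[0,1]^n$, and then invokes the inequality $\asdimAN(X)\geq \dim(\text{cone})$. Your approach bypasses cones entirely and reduces directly to Lebesgue's theorem, which is arguably more elementary and makes the role of the ``expanded'' hypothesis (the uniform mesh bound) more transparent.
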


Suppose that $P$ is a set with $|P| \geq n$, $(\ell_i)_{i \in P}$ is a $P$-tuple of natural numbers, and $s_P$ is an expansion constant.  Let $k_P$ be a natural number with $k_P \leq \min\{\ell_i/2 \mid i \in P\}$.  Then by (\ref{geodFormNorm}), $s_P \bigoplus_{i \in P} \mathbb Z_{\ell_i}$ contains an expanded $n$-dimensional cube $s_P\{0, \ldots, k_P\}^n$.  With this observation and \cref{cubeLemma1}, one can construct a group which can achieve any positive Assouad-Nagata dimension.  The idea is to take a direct sum of cyclic groups, block every $n$ of them together, and scale the blocks appropriately.  In \cite{Higes} Higes uses this idea to construct, for any $n \in \mathbb Z^+ \cup \{\infty\}$, a normed group $G_n$ with asymptotic dimension zero but Assouad-Nagata dimension $n$.  However, in Higes' examples, if $m \neq n$, then $G_m$ and $G_n$ are not isomorphic.  For our purposes, it is important that the group be fixed, with only the norm varying.  The rest of this section is devoted to working out the details of this construction.  To smooth the process, we introduce the following ad hoc notation.

\begin{definition}\label{partitions}
For each $m \in \mathbb Z^+ \cup \{\infty\}$, let $\mathcal P_m = \{P_{(m,j)} \mid j \in \mathbb N\}$ be the partition of $\mathbb N$ given by
\[
P_{(m,j)} = 
\begin{cases}
\{jm, jm+1, \ldots, (j+1)m-1\} & \text{ if } m \in \mathbb Z^+\\
\{j^2, j^2+1, \ldots, (j+1)^2-1\} & \text{ if } m=\infty \, .
\end{cases}
\]
\end{definition}

\begin{definition}
Let $s=(s_i)$ be a sequence, $m \in \mathbb Z^+ \cup \{\infty\}$.  Let the \emph{$m$-inflation} of $s$, denoted $m \times s$, be the sequence defined by
$$(m \times s)_i = s_j \Leftrightarrow i \in P_{(m,j)}\, .$$
\end{definition}

For example, if $s = (1,2,3, \ldots)$, then $2 \times s = (1,1,2,2,3,3, \ldots)$ and $\infty \times s = (1, 2, 2, 2, 3, 3, 3, 3, \ldots)$.  By definition,
\begin{equation}
s_i =
\begin{cases}
(m \times s)_{im} & \text{ if } m \in \mathbb Z^+ \\
(m \times s)_{i^2} &\text{ if } m = \infty \,
\end{cases} 
\quad \text{and} \quad 
(m \times s)_i = 
\begin{cases}
s_{\lfloor i/m \rfloor} & \text{ if } m \in \mathbb Z^+ \\
s_{\lfloor \sqrt i \rfloor} &\text{ if } m = \infty \, .
\end{cases}
\end{equation}

\begin{lemma}\label{ANLowerBound}
Let $d \in \mathbb N$, and let $(c_0, \ldots, c_{d-1})$ be a finite sequence of scaling constants.  Let $m \in \mathbb Z^+ \cup \{\infty\}$ be fixed, let $(s_i)$ be an increasing sequence of expansion constants, and let $(\ell_i)$ be an increasing sequence of positive integers.  Let
\begin{align*}
Z_d &= \bigoplus_{i=0}^{d-1} c_i\mathbb Z \, , &&
K_m = \bigoplus_{i \in \mathbb N} (m \times s)_i \mathbb Z_{\ell_i} \, .
\end{align*}
Then $\asdimAN(Z_d \times K_m) \geq d+m$.
\end{lemma}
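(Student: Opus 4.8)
The plan is to locate inside $Z_d \times K_m$ a sequence of isometrically embedded expanded $(d+m)$-dimensional cubes whose side lengths tend to infinity, and then to quote \cref{cubeLemma1}. The only genuine obstruction is that the scaling constants $c_i$ and $(m\times s)_i$ are arbitrary positive reals (with $(m\times s)_i\ge 1$), whereas an expanded cube carries a single scaling constant common to all of its coordinate directions; when these constants are not all commensurable, that prevents us from realizing an expanded cube \emph{isometrically}. I would remove this difficulty at the outset using \cref{WLOGIntegers}: the finite tuple $(c_0,\dots,c_{d-1})$ and the tuple $((m\times s)_i)_{i\in\mathbb N}$, all of whose entries are $\ge 1$, are bounded away from zero, so $Z_d\times K_m$ is bi-Lipschitz equivalent to $Z_d'\times K_m'$, where $Z_d'=\bigoplus_{i<d}c_i'\mathbb Z$ and $K_m'=\bigoplus_{i\in\mathbb N}(m\times s)_i'\mathbb Z_{\ell_i}$ now have all scaling constants positive integers; note that $(m\times s)_i'=\lceil s_j\rceil=:s_j'$ is constant on each block $P_{(m,j)}$. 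Since bi-Lipschitz equivalence passes to $\ell^1$ products and $\asdimAN$ is a bi-Lipschitz invariant, it suffices to bound $\asdimAN(Z_d'\times K_m')$ from below.

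For the cube construction, put $c^*=\operatorname{lcm}(c_0',\dots,c_{d-1}')$, and for $j\in\mathbb N$ set $\lambda_j=c^*s_j'\ge 1$ and $K_j=\lfloor \ell_{p(j)}/(2c^*)\rfloor$, where $p(j)$ is the least index in the block $P_{(m,j)}$ (thus $p(j)=jm$ if $m\in\mathbb Z^+$ and $p(j)=j^2$ if $m=\infty$). In $Z_d'$, the points $\bigl(a_i\,c^*s_j'/c_i'\bigr)_{i<d}$ with $a_i\in\{0,\dots,K_j\}$ are honest elements, as each $c^*/c_i'$ is a positive integer, and their pairwise $\ell^1$-distance equals $\lambda_j\sum_{i<d}|a_i-a_i'|$, so they form an isometric copy of $\lambda_j\{0,\dots,K_j\}^d$. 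In $K_m'$, the block $P_{(m,j)}$ has $m$ elements when $m\in\mathbb Z^+$ (and $2j+1$ elements when $m=\infty$); pick $m$ of these coordinates and consider the elements of $K_m'$ supported there with values among $\{0,c^*,2c^*,\dots,K_jc^*\}$. Since $(\ell_i)$ is increasing, $K_jc^*\le \ell_{p(j)}/2\le \ell_i/2$ for every such $i$, so these values are in geodesic form; by (\ref{geodFormNorm}) the induced metric on this set is again $\lambda_j\sum|a_i-a_i'|$, giving an isometric copy of $\lambda_j\{0,\dots,K_j\}^m$. The $\ell^1$ product of these two cubes is an isometrically embedded copy of $\lambda_j\{0,\dots,K_j\}^{d+m}$ inside $Z_d'\times K_m'$.

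Because $c^*$ is fixed while $\ell_{p(j)}\to\infty$, we get $K_j\to\infty$. When $m\in\mathbb Z^+$ this yields precisely a sequence of expanded $(d+m)$-cubes with side length tending to infinity, so \cref{cubeLemma1} gives $\asdimAN(Z_d\times K_m)=\asdimAN(Z_d'\times K_m')\ge d+m$. When $m=\infty$, the same construction works with $m$ replaced by any fixed $n\ge d$: once $2j+1\ge n-d$ the block $P_{(\infty,j)}$ supplies $n-d$ coordinates, producing a sequence of expanded $n$-cubes of unbounded side length, whence $\asdimAN(Z_d\times K_\infty)\ge n$ for all $n$, i.e. it is infinite. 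The step requiring real care is synchronizing a single scaling constant $\lambda_j$ across the $Z_d'$- and $K_m'$-coordinates while still allowing $K_j$ to grow; the passage to integer scaling constants and the choice $\lambda_j=c^*s_j'$ (which makes the step size on the $K_m'$-side the fixed constant $c^*$) are exactly what make this work.
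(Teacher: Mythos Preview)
Your proof is correct and follows essentially the same strategy as the paper: pass to integer scaling constants via \cref{WLOGIntegers}, then locate an expanded $(d+m)$-cube (or $n$-cube, for $m=\infty$) supported on $Z_d$ together with a single block $P_{(m,j)}$, and invoke \cref{cubeLemma1}. The only difference is cosmetic: the paper observes that any finite scaled product $\bigoplus_{i<d} c_i\mathbb Z$ is bi-Lipschitz equivalent to $\mathbb Z^d$, so it simply sets all $c_i=1$ and takes the common step size to be $s_j$ itself, whereas you keep the $c_i'$ distinct and synchronize via $c^*=\operatorname{lcm}(c_0',\dots,c_{d-1}')$; both devices serve the same purpose and neither buys anything the other does not.
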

\begin{proof}
By \cref{WLOGIntegers}, we may assume without loss of generality that all $s_i$ are positive integers.  Since finite direct products preserve bi-Lipschitz equivalence, we may also assume that all $c_i$ are equal to 1, so that $Z_d = \mathbb Z^d$.

Now note that
$$Z_d \times K_m = \mathbb Z^d \times \bigoplus_{j \in \mathbb N} \left( s_j \bigoplus_{i \in P_{(m,j)}} \mathbb Z_{\ell_i} \right)_{\, ,}$$
where $\mathbb Z^d \times s_j \bigoplus_{i \in P_{(m,j)}} \mathbb Z_{\ell_i}$ is an isometrically embedded subgroup for each $j \in \mathbb N$.  Let 
$$k_j = \min\{\lfloor \ell_i/2 \rfloor \mid i \in P_{(m,j)}\} = 
\begin{cases}
\lfloor \ell_{jm}/2 \rfloor & \text{ if } m \in \mathbb Z^+ \\
\lfloor \ell_{j^2}/2 \rfloor & \text{ if } m = \infty \, .
\end{cases}$$
Then $\lim_{j \to \infty} k_j = \infty$.

If $m \in \mathbb Z^+$, then $|P_{(m,j)}| = m$ for all $j \in \mathbb N$.  Then since $s_j$ is an integer, $\mathbb Z^d \times s_j \bigoplus_{i \in P_{(m,j)}} \mathbb Z_{\ell_i}$ contains the expanded $(d+m)$-dimensional cube $s_j\{0, \ldots, k_j\}^{d+m}$ for all $j \in \mathbb N$.  Since $\lim_{j \to \infty} k_j = \infty$, by \cref{cubeLemma1} we have $\asdimAN(\mathbb Z^d \times K_m) \geq d+m$.

If $m = \infty$, let $n \in \mathbb Z^+$.  Then  $|P_{(m,j)}| = (j+1)^2-j^2 = 2j+1 \geq n$ for all $j \geq n$.  Therefore $s_j \bigoplus_{i \in P_{(m,j)}} \mathbb Z_{\ell_i}$ contains the expanded $n$-dimensional cube $s_j\{0, \ldots, k_j\}^n$ for all $j \geq n$.  Since $\lim_{j \to \infty} k_j = \infty$, by \cref{cubeLemma1} we have $\asdimAN(K_\infty) \geq n$.  Since $n \in \mathbb Z^+$ was chosen arbitrarily, $\asdimAN(K_\infty) = \infty$, thus $\asdimAN(Z_d \times K_\infty) = \infty$.
\end{proof}

Now, in the notation of \cref{ANLowerBound}, we wish to impose certain conditions on the sequence $(s_i)$ of expansion constants to guarantee that $\asdimAN(Z_d \times K_m) = d+m$ exactly.   We will use a lemma of Higes; in order to do so we need to introduce a little notation, and consider a different norm on countable direct sums of scaled normed groups.  

\begin{definition}
Let $(G_i)$ be a sequence of normed groups and $s=(s_i)$ a sequence of scaling constants.  Let $G = \bigoplus_{i \in \mathbb N} G_i$.  For convenience, let us define the \emph{height} function $h: G \to \mathbb N$ by
\[
h(g)=
\begin{cases}
0 & \text{if }g = 1 \\ 
\max(\supp(g)) &\text{ otherwise.}
\end{cases}
\]
Now define the \emph{quasi-ultranorm} on $G$ induced by $s$, denoted $\|\cdot\|_s^{\textnormal{qu}}$, by 
\begin{equation}\label{quNorm}
\|g\|_s^{\textnormal{qu}} = s_h\|g_h\|_{G_h}
\end{equation}
for all $g = (g_i) \in G$, where $h = h(g)$.
\end{definition}

In \cite{Higes}, Higes calls the metric associated to this norm the \emph{quasi-ultrametric} generated by the sequence of metrics $(d_{G_i})$, where $d_{G_i}$ is the metric associated to the scaled norm $s_i\|\cdot\|_{G_i}$ for each $i \in \mathbb N$.  For this reason we call the norm in (\ref{quNorm}) the quasi-ultranorm on $G$ induced by $s$, and put `$\textnormal{qu}$' in the superscript.  The next lemma says that if all $G_i$ are finite then, under mild assumptions about the growth of the sequence $s$, the norms $\|\cdot\|_s$ and $\|\cdot\|_s^{\textnormal{qu}}$ are, for our purposes, interchangeable.

\begin{lemma}\label{quNormEquivalence}
Let $(G_i)$ be a sequence of normed groups and $s = (s_i)$ a sequence scaling constants.  Let $G = \bigoplus_{i \in \mathbb N} G_i$.  Suppose that $G_i, \|\cdot\|_{G_i}, s_i$ satisfy the following conditions for all $i \in \mathbb N$:
\begin{itemize}
\item $\|g_i\|_{G_i} \geq 1$ for all $g_i \in G_i \smallsetminus \{1\}$.
\item $\diam(G_{i+1}) \geq \diam(G_i)$.
\item $s_{i+1} \geq 2s_i\diam(G_i)$.
\end{itemize}
Then the norm $\|\cdot\|_s$ and quasi-ultranorm $\|\cdot\|_s^{\textnormal{qu}}$ induced by $s$ are bi-Lipschitz equivalent.
\end{lemma}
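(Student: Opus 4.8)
The plan is to prove the stronger statement that the two norms are comparable up to the multiplicative constant $2$, i.e.\ $\|g\|_s^{\textnormal{qu}} \leq \|g\|_s \leq 2\|g\|_s^{\textnormal{qu}}$ for every $g \in G$. Since both are left-invariant norms, this is exactly the assertion that the identity map of $G$ is a bi-Lipschitz equivalence between the metrics they induce. One inequality is immediate: for $g = (g_i) \in G$ with $h = h(g)$, the quantity $\|g\|_s = \sum_{i \in \mathbb N} s_i\|g_i\|_{G_i}$ is a sum of nonnegative terms one of which is $s_h\|g_h\|_{G_h} = \|g\|_s^{\textnormal{qu}}$, so $\|g\|_s^{\textnormal{qu}} \leq \|g\|_s$; and both norms vanish at the identity.

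For the reverse inequality I would first reduce to the case that every $G_i$ is nontrivial. A trivial factor contributes nothing to $\|\cdot\|_s$, to $\|\cdot\|_s^{\textnormal{qu}}$, or to the height function, and the hypothesis $\diam(G_{i+1}) \geq \diam(G_i)$ forces the trivial factors (if any) to form an initial segment of the index set, so deleting them and re-indexing preserves all three hypotheses along with both norms. Once all $G_i$ are nontrivial, the hypothesis $\|g_i\|_{G_i} \geq 1$ for $g_i \neq 1$ yields $\diam(G_i) \geq 1$, so $s_{i+1} \geq 2s_i\diam(G_i) \geq 2s_i$; thus $(s_i)$ grows at least geometrically, and in particular $\sum_{j=1}^{h} s_j \leq s_h\sum_{k \geq 0} 2^{-k} = 2s_h$ for every $h$. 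Then for $g \neq 1$ with $h = h(g)$, using $\supp(g) \subseteq \{0, \ldots, h\}$ and $s_i\|g_i\|_{G_i} \leq s_i\diam(G_i) \leq \tfrac{1}{2}s_{i+1}$ (the last step being the third hypothesis rearranged),
\begin{align*}
\|g\|_s &= s_h\|g_h\|_{G_h} + \sum_{i=0}^{h-1} s_i\|g_i\|_{G_i} \leq s_h\|g_h\|_{G_h} + \sum_{i=0}^{h-1}\tfrac{1}{2}s_{i+1} \\
&= s_h\|g_h\|_{G_h} + \tfrac{1}{2}\sum_{j=1}^{h} s_j \leq s_h\|g_h\|_{G_h} + s_h \leq 2 s_h\|g_h\|_{G_h} = 2\|g\|_s^{\textnormal{qu}},
\end{align*}
where the final inequality uses $\|g_h\|_{G_h} \geq 1$. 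Combined with the easy direction, this proves the comparison.

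The computation is essentially a one-line geometric-series estimate, so the only real point to get right is the interplay of the three hypotheses that makes that estimate available: the lower bound $\|g_i\|_{G_i} \geq 1$ together with diameter monotonicity is precisely what upgrades $s_{i+1} \geq 2s_i\diam(G_i)$ to the genuine geometric growth $s_{i+1} \geq 2s_i$ needed for $\sum_{j \leq h} s_j \leq 2s_h$. The only mild nuisance is the reduction to nontrivial factors, which I would dispatch as indicated rather than carry trivial summands through the estimate.
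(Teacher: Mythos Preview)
Your proof is correct and reaches the same conclusion as the paper's, namely $\|g\|_s^{\textnormal{qu}} \leq \|g\|_s \leq 2\|g\|_s^{\textnormal{qu}}$, via the same key inequality $s_i\diam(G_i) \leq \tfrac{1}{2}s_{i+1}$. The only difference is packaging: the paper argues by induction on $h(g)$ (which quietly handles trivial factors, since the inductive step only invokes monotonicity of $s_j\diam(G_j)$ above the height of the nontrivial tail), whereas you sum the geometric series directly and therefore need the preliminary reduction to nontrivial $G_i$; both routes are equally short and yield the same constant.
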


\begin{proof}
Clearly $\|g\|_s^{\textnormal{qu}} \leq \|g\|_s$ for all $g \in G$.

We now prove by induction on $h(g)$ that $\|g\|_s \leq 2\|g\|_s^{\textnormal{qu}}$.  This is clear when $h(g)=0$.  Now suppose that $h(g) = k \geq 1$.  Write $g$ as $g'g''$, where $g_j' = g_j$ exactly when $j=k$ and is equal to 1 otherwise, and $h(g'') = i < k$.  Then we have
\begin{align*}
\|g\|_s &\leq \|g'\|_s + \|g''\|_s = \|g'\|_s^{\textnormal{qu}} + \|g''\|_s \leq \|g'\|_s^{\textnormal{qu}} + 2\|g''\|_s^{\textnormal{qu}}\\
&\leq \|g'\|_s^{\textnormal{qu}} + 2s_i\diam(G_i) \leq \|g'\|_s^{\text{qu}}+2s_{k-1}\diam(G_{k-1})\\ 
&\leq \|g'\|_s^{\textnormal{qu}} + s_k \leq 2\|g'\|_s^{\textnormal{qu}} = 2\|g\|_s^{\textnormal{qu}} \, .
\end{align*}
\end{proof}

\begin{lemma}\cite[Proof of Corollary 4.11]{Higes}\label{ANUpperBound}
Let $(\ell_i)$ be an increasing sequence of positive integers with $\ell_0 \geq 2$.  Let $m$ be a fixed positive integer.  Let $s=(s_i)$ be a sequence of expansion constants such that
$$s_{i+1} \geq 1+s_i\diam(\mathbb Z_{\ell_i}^m) =  1+(m\lfloor \ell_i/2 \rfloor)s_i \, .$$
Let $K_m^{\textnormal{qu}} = (\bigoplus_{i \in \mathbb N} \mathbb Z_{\ell_i}^m, \|\cdot\|_s^{\textnormal{qu}})$.  Then for any $k \in \mathbb N$ we have $\asdimAN(\mathbb Z^k \times K_m^{\textnormal{qu}}) = k+m$.
\end{lemma}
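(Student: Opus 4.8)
The plan is to prove the two inequalities $\asdimAN(\mathbb{Z}^k\times K_m^{\textnormal{qu}})\ge k+m$ and $\asdimAN(\mathbb{Z}^k\times K_m^{\textnormal{qu}})\le k+m$ separately.

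For the lower bound I would use \cref{cubeLemma1}, in the spirit of the proof of \cref{ANLowerBound}. Since every $s_i\ge 1$, replacing the sequence $s$ by $(\lceil s_i\rceil)_i$ multiplies the quasi-ultranorm by at most a factor of $2$ (the estimate is exactly the one in the proof of \cref{WLOGIntegers}, applied termwise to the single surviving term $s_h\|g_h\|_{G_h}$), so we may assume every $s_i$ is a positive integer. Fix $i$ and consider the subgroup $S_i\le \mathbb{Z}^k\times K_m^{\textnormal{qu}}$ of elements supported on the $k$ free $\mathbb{Z}$-factors and on the $i$-th block $\mathbb{Z}_{\ell_i}^m$. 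If $g,g'$ are supported on that block then so is $g^{-1}g'$, so $h(g^{-1}g')\le i$ and $\|g^{-1}g'\|_s^{\textnormal{qu}}=s_i\|g_i^{-1}g_i'\|_{\mathbb{Z}_{\ell_i}^m}$; hence $S_i$ is isometrically embedded and is isometric to $\mathbb{Z}^k\times s_i\mathbb{Z}_{\ell_i}^m$. Since $s_i\in\mathbb{Z}^+$ and $\{0,1,\dots,\lfloor\ell_i/2\rfloor\}$ embeds isometrically into $\mathbb{Z}_{\ell_i}$, the subgroup $S_i$ contains an isometric copy of the expanded $(k+m)$-dimensional cube $s_i\{0,\dots,\lfloor\ell_i/2\rfloor\}^{k+m}$. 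As $(\ell_i)$ is increasing we have $\lfloor\ell_i/2\rfloor\to\infty$, so \cref{cubeLemma1} gives $\asdimAN(\mathbb{Z}^k\times K_m^{\textnormal{qu}})\ge k+m$.

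For the upper bound I would first peel off the $\mathbb{Z}^k$ factor: $\asdimAN(\mathbb{Z}^k)=k$ is classical, so by \cref{subadditivity} it suffices to prove $\asdimAN(K_m^{\textnormal{qu}})\le m$. The space $K_m^{\textnormal{qu}}$ is precisely the quasi-ultrametric space generated by the sequence of finite metric spaces $(s_i\mathbb{Z}_{\ell_i}^m)$, and this is where I would invoke Higes's analysis of such spaces (the chain of results in \cite{Higes} leading to his Corollary 4.11): the quasi-ultrametric generated by a sequence of metric spaces, each having Assouad-Nagata dimension $\le m$ with one common control function that may be taken linear, and whose scaling constants satisfy a growth condition — here exactly $s_{i+1}\ge 1+s_i\diam(\mathbb{Z}_{\ell_i}^m)$ — again has Assouad-Nagata dimension $\le m$. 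The one point requiring attention is that the finite groups $\mathbb{Z}_{\ell_i}^m$ carry a control function for ``$\asdimAN\le m$'' that does not depend on $i$; by the quantitative behaviour of control functions under $\ell^1$ products this reduces to the cycles $\mathbb{Z}_{\ell_i}$, which one covers at every scale $r$ by arcs of length $O(r)$ with $r$-multiplicity $2$ (and by the single arc $\mathbb{Z}_{\ell_i}$ itself once $r\gtrsim\ell_i$), the implied constants being independent of $\ell_i$. Combined with the lower bound this gives $\asdimAN(\mathbb{Z}^k\times K_m^{\textnormal{qu}})=k+m$.

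I expect the main obstacle to be the upper bound, specifically the passage from the finite blocks to the infinite quasi-ultrametric. Infinite-union phenomena of this kind are exactly what can fail for Assouad-Nagata dimension — that failure is, after all, the theme of this paper — so the growth hypothesis $s_{i+1}\ge 1+s_i\diam(\mathbb{Z}_{\ell_i}^m)$ must be used in an essential way to amalgamate the covers of consecutive blocks while keeping the $r$-multiplicity at $m+1$ and the mesh linearly controlled. Securing the uniform (in $i$) control function for the blocks $\mathbb{Z}_{\ell_i}^m$ is the prerequisite that forces the dimension to be exactly $m$ rather than $0$ or $\infty$; everything else — the cube argument, the reduction through \cref{subadditivity}, and the value $\asdimAN(\mathbb{Z}^k)=k$ — is routine.
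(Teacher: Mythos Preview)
The paper does not prove this lemma at all: it is stated with the citation \cite[Proof of Corollary 4.11]{Higes} and no proof environment follows, and the paper immediately moves on to ``We use this lemma in the case $k=0,\ m=1$ \ldots''. So there is nothing to compare against beyond noting that the result is imported wholesale from Higes.

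Your sketch is consistent with that. The lower bound via expanded cubes is correct and is exactly the mechanism of \cref{ANLowerBound}; your observation that elements supported on a single block $i$ form an isometric copy of $\mathbb{Z}^k\times s_i\mathbb{Z}_{\ell_i}^m$ under the quasi-ultranorm is right. For the upper bound you do precisely what the paper does, namely defer to Higes: reduce to $\asdimAN(K_m^{\textnormal{qu}})\le m$ via \cref{subadditivity} and then invoke the quasi-ultrametric machinery of \cite{Higes} (uniform linear control functions on the blocks $\mathbb{Z}_{\ell_i}^m$, amalgamated using the growth condition $s_{i+1}\ge 1+s_i\diam(\mathbb{Z}_{\ell_i}^m)$). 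Your identification of this amalgamation step as the only nontrivial point is accurate, and it is exactly what the citation to Higes's Corollary~4.11 is meant to cover.
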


We use this lemma in the case $k=0, m=1$ to obtain the slightly generalized lemma that we need.

\begin{lemma}\label{K_nConstruction}
Let $d \in \mathbb N$, and let $(c_0, \ldots, c_{d-1})$ be a finite sequence of scaling constants.  Let $(\ell_i)$ be a sequence of positive integers, and let $m \in \mathbb Z^+ \cup \{\infty\}$ be fixed.  Let $(s_j)$ be an increasing sequence of expansion constants such that, if $m \in \mathbb Z^+$, we have
$$s_{j+1} \geq (\ell_{(j+1)m}) s_j$$
for all $j \in \mathbb N$.  Now let 
\begin{align*}
Z_d = \bigoplus_{i=0}^{d-1} c_i \mathbb Z &&
K_m = \bigoplus_{i \in \mathbb N} (m \times s)_i \mathbb Z_{\ell_i} \, .
\end{align*}
Then $\asdimAN(Z_d \times K_m) = d+m$.
\end{lemma}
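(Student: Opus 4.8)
The plan is to reduce \cref{K_nConstruction} to the already-established upper bound \cref{ANUpperBound} (via \cref{quNormEquivalence}) for the upper bound, and to \cref{ANLowerBound} for the lower bound, treating the cases $m \in \mathbb Z^+$ and $m = \infty$ separately. First note that the lower bound $\asdimAN(Z_d \times K_m) \geq d+m$ is immediate from \cref{ANLowerBound}, since the hypotheses there (increasing $(s_i)$, increasing $(\ell_i)$, $(c_i)$ a finite sequence of scaling constants) are a subset of the hypotheses here; in the case $m=\infty$ this already gives $\asdimAN(Z_d \times K_\infty) = \infty = d + \infty$, so from now on I may assume $m \in \mathbb Z^+$. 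By \cref{subadditivity} we have $\asdimAN(Z_d \times K_m) \leq \asdimAN(Z_d) + \asdimAN(K_m) \leq d + \asdimAN(K_m)$ (using $\asdimAN(\mathbb Z^d) = d$, and that $Z_d$ is bi-Lipschitz to $\mathbb Z^d$), so it suffices to prove $\asdimAN(K_m) \leq m$; combined with the lower bound $\asdimAN(Z_d \times K_m) \geq d+m$ this forces equality. Actually, to get the bound cleanly I will instead directly show $\asdimAN(Z_d \times K_m) \le d+m$ by identifying $Z_d \times K_m$, up to bi-Lipschitz equivalence, with a space to which \cref{ANUpperBound} applies.

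The key step is to re-index the direct sum defining $K_m$ by blocks. Since $\mathcal P_m$ partitions $\mathbb N$ into consecutive blocks $P_{(m,j)}$ of size $m$, and $(m\times s)_i = s_j$ for $i \in P_{(m,j)}$, we can write
\[
K_m = \bigoplus_{j \in \mathbb N} s_j \!\!\bigoplus_{i \in P_{(m,j)}} \!\! \mathbb Z_{\ell_i}.
\]
I want to replace the inner blocks $\bigoplus_{i \in P_{(m,j)}} \mathbb Z_{\ell_i}$ by the uniform group $\mathbb Z_{L_j}^m$ for a suitable $L_j$. Since $(\ell_i)$ is increasing, for $i \in P_{(m,j)}$ we have $\ell_{jm} \leq \ell_i \leq \ell_{(j+1)m-1} < \ell_{(j+1)m}$, so $\mathbb Z_{\ell_{jm}}^m$ coarsely (indeed with a uniform bi-Lipschitz-type distortion bounded by $\ell_{(j+1)m}/\ell_{jm}$) compares to $\bigoplus_{i\in P_{(m,j)}}\mathbb Z_{\ell_i}$; but a cleaner route, and the one I would take, is to pass to the quasi-ultranorm. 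Set $G_j = \bigoplus_{i \in P_{(m,j)}} \mathbb Z_{\ell_i}$ with its natural ($\ell^1$) norm; then $\|g_j\|_{G_j} \geq 1$ for $g_j \neq 0$, $\diam(G_j) = \sum_{i \in P_{(m,j)}} \lfloor \ell_i/2\rfloor \le m\lfloor \ell_{(j+1)m-1}/2\rfloor$ is nondecreasing in $j$, and the growth hypothesis $s_{j+1} \geq \ell_{(j+1)m} s_j \geq 2\,(m\lfloor\ell_{(j+1)m-1}/2\rfloor)\,s_j \ge 2 s_j \diam(G_j)$ holds (using $\ell_{(j+1)m} > \ell_{(j+1)m-1} \ge 2\lfloor\ell_{(j+1)m-1}/2\rfloor$ and $\ell_{(j+1)m} \ge m$ once $\ell$'s are large enough — one checks the small cases directly, or absorbs constants into a bi-Lipschitz equivalence). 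So by \cref{quNormEquivalence}, $K_m$ is bi-Lipschitz equivalent to $(\bigoplus_j G_j, \|\cdot\|_s^{\mathrm{qu}})$; and since the quasi-ultranorm only sees the top nonzero block, this latter group is bi-Lipschitz equivalent to $(\bigoplus_j \mathbb Z_{\ell_{(j+1)m-1}}^m, \|\cdot\|_{s}^{\mathrm{qu}})$ — reindexing/rescaling the $\ell$'s within each block changes the quasi-ultranorm only up to a bounded factor per block, hence up to a single bi-Lipschitz constant overall, since $\ell_{jm} \le \ell_i \le \ell_{(j+1)m-1}$ gives a uniform ratio bound on $\|g_j\|_{G_j}$ versus $\|g_j\|_{\mathbb Z_{\ell_{(j+1)m-1}}^m}$ (namely $\ell_{(j+1)m-1}/\ell_{jm} \le \ell_{(j+1)m}/\ell_0$... here I should be slightly careful and instead just take $L_j := \ell_{(j+1)m-1}$ and note $\|g\|_{G_j} \le \|\iota(g)\|_{\mathbb Z^m_{L_j}} \le (L_j/\ell_{jm})\|g\|_{G_j}$ for the obvious coordinatewise inclusion $\iota$, which may not be uniformly bounded — so the genuinely clean statement is just to apply \cref{ANUpperBound} to the sequence $(L_j)$ directly). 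Let me restate that step correctly: take $L_j = \ell_{(j+1)m - 1}$ (or any convenient increasing sequence with $L_j \ge \ell_i$ for $i \in P_{(m,j)}$ and $L_j \ge 2$); then the growth condition required by \cref{ANUpperBound}, namely $s_{j+1} \ge 1 + (m\lfloor L_j/2\rfloor)s_j$, follows from our hypothesis $s_{j+1} \ge \ell_{(j+1)m}s_j$ because $\ell_{(j+1)m} \ge \ell_{(j+1)m-1} + 1 = L_j + 1 \ge 1 + m\lfloor L_j/2\rfloor$ fails in general — so the honest move is to observe that $\bigoplus_{i\in P_{(m,j)}}\mathbb Z_{\ell_i}$ isometrically embeds into $\mathbb Z_{L_j}^m$ and coarsely the scaled direct sums agree, then apply \cref{ANUpperBound} with $k=0$.

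The main obstacle, then, is bookkeeping: matching the growth condition $s_{j+1} \ge \ell_{(j+1)m}s_j$ assumed here against the growth condition $s_{j+1} \ge 1 + (m\lfloor \ell_i/2\rfloor) s_j$ needed to invoke \cref{ANUpperBound}, and matching the non-uniform blocks $\bigoplus_{i \in P_{(m,j)}}\mathbb Z_{\ell_i}$ against the uniform blocks $\mathbb Z_{\ell'}^m$. I expect to handle this as follows: define a new increasing sequence $L_j := \ell_{(j+1)m-1}$, observe that $\bigoplus_{i\in P_{(m,j)}}\mathbb Z_{\ell_i} \le \mathbb Z_{L_j}^m$ isometrically (with respect to the $\ell^1$ norms), so that $K_m$ isometrically embeds into $K' := \bigoplus_j s_j \mathbb Z_{L_j}^m$; then verify that the sequence $(s_j)$, $(L_j)$ satisfies the hypotheses of \cref{ANUpperBound} — here $s_{j+1}\ge \ell_{(j+1)m}s_j \ge (L_j+1)s_j \ge (1 + m\lfloor L_j/2\rfloor)s_j$ provided $m\lfloor L_j/2\rfloor \le L_j$, which holds when $m=1$ or when we only need it up to a bounded multiplicative slack that \cref{WLOGIntegers}-style bi-Lipschitz arguments absorb — and conclude $\asdimAN(\mathbb Z^d \times K') = d+m$ by \cref{ANUpperBound} (with $k = d$, after absorbing the $c_i$ into a bi-Lipschitz equivalence $Z_d \simeq \mathbb Z^d$). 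Since $\mathbb Z^d \times K_m$ isometrically embeds into $\mathbb Z^d \times K'$ as an (undistorted) subgroup and asymptotic Assouad–Nagata dimension is monotone under such embeddings, we get $\asdimAN(\mathbb Z^d\times K_m) \le \asdimAN(\mathbb Z^d\times K') = d+m$, and together with the lower bound from \cref{ANLowerBound} this yields equality. I anticipate that the only place requiring genuine care — rather than routine estimate-chasing — is confirming that the inequality relating the two growth conditions can always be arranged (possibly after rescaling $(s_j)$ by a bounded bi-Lipschitz factor, which does not change $\asdimAN$), and I would spell that rescaling out explicitly in the final write-up.
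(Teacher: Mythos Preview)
Your lower-bound argument is fine, but the upper-bound strategy has two genuine gaps that are not just bookkeeping.

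First, the ``isometric embedding'' $\bigoplus_{i\in P_{(m,j)}}\mathbb Z_{\ell_i} \hookrightarrow \mathbb Z_{L_j}^m$ does not exist in general: as groups, $\mathbb Z_{\ell_i}$ embeds in $\mathbb Z_{L_j}$ only when $\ell_i \mid L_j$, and even forgetting the group structure, the cyclic norms $\min(x,\ell_i-x)$ and $\min(x,L_j-x)$ do not agree. Second, and more seriously, the growth-condition mismatch you flag is not absorbable by a bounded rescaling. You need $s_{j+1} \ge 1 + m\lfloor L_j/2\rfloor\, s_j \approx \tfrac{m}{2}\ell_{(j+1)m-1}\, s_j$, but the hypothesis only gives $s_{j+1} \ge \ell_{(j+1)m}\, s_j$. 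If, say, $\ell_i = i+2$ and $m=10$, then $\ell_{(j+1)m}/\ell_{(j+1)m-1} \to 1$ and the required inequality fails for all large $j$. Replacing $s_j$ by $Cs_j$ does not change the ratio $s_{j+1}/s_j$, so no bi-Lipschitz rescaling of $(s_j)$ rescues this.

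The paper's route avoids both issues by a different decomposition: instead of grouping $K_m$ into $m$-dimensional blocks indexed by $j$, it splits $K_m$ into $m$ one-dimensional ``strands'' indexed by residue class,
\[
K_m \;=\; \bigoplus_{r=0}^{m-1}\Bigl(\bigoplus_{j\in\mathbb N} s_j\,\mathbb Z_{\ell_{jm+r}}\Bigr),
\]
and applies \cref{ANUpperBound} with parameters $k=0$, $m=1$ to each strand. For a single strand the required growth condition is only $s_{j+1} \ge 1 + \lfloor \ell_{jm+r}/2\rfloor\, s_j$, and since $\ell_{jm+r} \le \ell_{(j+1)m}$ this follows directly from the hypothesis $s_{j+1}\ge \ell_{(j+1)m}s_j$ (via $\ell_{(j+1)m}s_j \ge \ell_{jm+r}s_j \ge 2\lfloor\ell_{jm+r}/2\rfloor s_j \ge 1+\lfloor\ell_{jm+r}/2\rfloor s_j$). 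Each strand then has $\asdimAN = 1$, and subadditivity (\cref{subadditivity}) gives $\asdimAN(K_m)\le m$. The point is that splitting by residue turns the problematic factor of $m$ in the growth condition into $m$ applications of the $m=1$ case, where the hypothesis is exactly strong enough.
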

\begin{proof}
The lower bound is established in \cref{ANLowerBound}.  For the upper bound, suppose that $m \in \mathbb Z^+$.  Then
$$K_m = \bigoplus_{r=0}^{m-1} \left(\bigoplus_{j \in \mathbb N} s_j \mathbb Z_{\ell_{jm+r}} \right)_{\, .}$$
Since $(\ell_i)$ is increasing, for all $j \in \mathbb N$ and $r \in \{0, \ldots, m-1\}$ we have that
$$s_{j+1} \geq (\ell_{(j+1)m})s_j \geq (\ell_{jm+r})s_j \geq (2 \lfloor \ell_{jm+r}/2 \rfloor)s_j = (2\diam(\mathbb Z_{\ell_{jm+r}}))s_j \geq 1+s_j\diam(\mathbb Z_{\ell_{jm+r}}) \, .$$
Therefore for any fixed $r \in \{0, \ldots, m-1\}$, the sequences $(\ell_{jm+r}), (\mathbb Z_{\ell_{jm+r}}),$ and $(s_j)$ together satisfy the assumptions of Lemmas \ref{quNormEquivalence} and \ref{ANUpperBound}.  Hence for all $r \in \{0, \ldots, m-1\}$,
$$\asdimAN\left(\bigoplus_{j \in \mathbb N} s_j \mathbb Z_{jm+r} \right) = \asdimAN\left(\bigoplus_{j \in \mathbb N} \mathbb Z_{jm+r}, \|\cdot\|_s \right) = \asdimAN\left(\bigoplus_{j \in \mathbb N} \mathbb Z_{jm+r}, \|\cdot\|_s^{\textnormal{qu}} \right) = 1 \, .$$
Thus by \cref{subadditivity},
$$\asdimAN(K_m) \leq \sum_{r=0}^{m-1}\asdimAN\left(\bigoplus_{j \in \mathbb N} s_j \mathbb Z_{jm+r} \right) \leq m \, ,$$
and $\asdimAN(Z_d)= \asdimAN(\mathbb Z^d) =d$.  Therefore $\asdimAN(Z_d \times K_m) \leq d+m$.
\end{proof}

The importance of \cref{K_nConstruction} lies in the fact that if $(\ell_i)$ is fixed and $m,n \in \mathbb Z^+ \cup \{\infty\}$ are distinct, then $K_m$ and $K_n$ are merely the same group with different norms.  Later, we will construct two finitely generated groups $A$ and $B$ with subgroups that are isomorphic and bi-Lipschitz equivalent to $K_m$ and $K_n$, respectively.  Since $K_m$ and $K_n$ are isomorphic, we construct a finitely generated group $G$ which is the amalgamated product of $A$ and $B$ along an isomorphism between $K_m$ and $K_n$.  The isomorphism `collapses' $K_n$, so that the Assouad-Nagata dimension of $G$ is not much more than $m$, while the Assouad-Nagata dimension of $B$ is at least $n$.  To construct $A, B$, and $G$ such that all of the aforementioned geometric properties hold, we use some small cancellation theory.  This is the topic of the next section.

\section{van Kampen diagrams and the $C'(\lambda)$ condition}

The goal of this section is to prove \cref{quasiGeod}, which states that words of a certain form are quasigeodesic in certain central extensions of $C'(\lambda)$ groups, where $0 < \lambda < \sfrac{1}{12}$.  This is a generalization \cite[Lemma 5.10]{Olshanskii_Osin_Sapir}, originally used to construct finitely generated groups with circle-tree asymptotic cones.  The proof of \cref{quasiGeod} is a technical argument that involves performing surgery on van Kampen diagrams.

We assume that the reader is familiar with the $C'(\lambda)$ condition and the notion of a van Kampen diagram.  However, there are myriad definitions of van Kampen diagram in the literature, and for our purposes it is necessary to define the $C'(\lambda)$ condition in a way which, though clearly equivalent to the usual definition, is slightly non-standard.  Therefore in Sections \ref{C'lambdaCondition} and \ref{vanKampenDiagrams} we fix terminology and notation, and provide all necessary definitions for the following sections.  

In \cref{vanKampenOps}, we define signed and unsigned $r$-face counts, where $r$ is a relation of a presentation.  We also introduce various operations on van Kampen diagrams, and examine how each of these operations affects the signed and unsigned $r$-face counts.  Our approach is to treat van Kampen diagrams as graphs embedded in the plane, so that the 2-cells are simply the bounded faces enclosed by the graph.  In this way we manipulate van Kampen diagrams directly in the plane and keep topological considerations to a minimum.  In \cref{vKDsOverC'1-6}, we collect some facts about van Kampen diagrams over $C'(\sfrac{1}{6})$ presentations that are used in the proof of \cref{quasiGeod}.  Finally, in \cref{technicalLemma}, we prove \cref{quasiGeod}.  

\subsection{The $C'(\lambda)$ condition}
\label{C'lambdaCondition}

Let $S$ be a set.  Let $S^{-1}$ be the set of formal inverses of $S$, let $1$ be a new symbol not in $S$, and declare $1^{-1}=1$.  Let
\begin{equation}
\begin{split}
S_1 &= S \cup \{1\}\\
S_\circ &= S \cup S^{-1} \cup \{1\}.
\end{split}
\end{equation}

The length of a word $w$ in the free monoid $S_\circ^*$ is denoted $|w|$.  There is a unique word of length 0 called the \emph{empty word} and denoted $\varepsilon$.  We define $w^0$ to be $\varepsilon$ for any $w \in S_\circ^*$.  A word $w \in S_\circ^*$ is \emph{reduced} if $w$ does not contain a subword of the form $1, ss^{-1},$ or $s^{-1}s$ for any $s \in S$, and \emph{cyclically reduced} if every cylcic shift of $w$ (including $w$ itself) is reduced.  

Let $R$ be a language over the alphabet $S_\circ$, that is, $R \subseteq S_\circ^*$.  Then  $R_*$ denotes the closure of $R$ under taking cyclic shifts and formal inverses of its elements.  We say that $R$ is \emph{reduced} if every element of $R$ is reduced, and \emph{cyclically reduced} if $R_*$ is reduced.  We say that $R$ is \emph{cyclically minimal} if it does not contain two distinct words, one of which is a cyclic shift of the other word or its inverse.  That is, $R$ is cyclically minimal if $R \cap \{r\}_* = \{r\}$ for each $r \in R$. 

A \emph{presentation} is a pair $\langle S \mid R \rangle$, where $S$ is a set and $R \subseteq S_\circ^*$.  The notation $G=\langle S \mid R \rangle$ means that $\langle S \mid R \rangle$ is a presentation and $G \cong F(S)/\llangle R \rrangle$, where $F(S)$ is the free group with basis $S$, and $\llangle R \rrangle$ is the normal closure of $R$ as a subset of $F(S)$.  

If $G$ is a group generated by a set $S$, there is a natural monoid epimorphism from $S_\circ^*$ to $G$ that evaluates a word in $S_\circ^*$ as a product of generators and their inverses, and sends $1$ to the identity element.  If both $G$ and $S$ are understood, then for a word $w \in S_\circ^*$ we denote the image of $w$ under this homomorphism by $\bar w$.  For a group element $g \in G$, we write $w=_G g$ to abbreviate that $\bar w = g$.  The \emph{word norm} on $G$ with respect to $S$ is defined by
$$\|g\|_G = \min\{|w| \mid w \in S_\circ^*, w =_G g\} \, .$$
We omit the generating set from the notation since any other choice of finite generating set yields a norm which is bi-Lipschitz equivalent. What matters is that the generating set is fixed throughout any proof in which the word norm plays a role.

A word $w \in S_\circ^*$ is called \emph{geodesic} in $G$ if $|w| = \|\bar w\|_G$.  If $u,w \in S_\circ^*$, $g \in G$, $w$ is geodesic, and $w =_G u =_G g$, then $w$ is called a \emph{geodesic representative} of $u$ or of $g$ in $G$.  If $K,C \geq 0$ are fixed constants, then we say that a word $w \in S_\circ^*$ is \emph{$(K,C)$-quasigeodesic} in $G$ if $|w| \leq K\|\bar w\|_G+C$.

Given two words $u, v \in S_\circ^*$, we say that $p$ is a \emph{piece} (of $u$ and of $v$) if there exists $u' \in \{u\}_*, v' \in \{v\}_*$ such that $p$ is a common prefix of $u'$ and $v'$.

\begin{definition}
Let $S$ be a set, $R \subseteq S_\circ^*$ a language, and $\lambda$ a real number with $0 < \lambda < 1$.  Then $R$ satisfies $C'(\lambda)$ if, whenever $u,v \in R$ and $u' \in \{u\}_*, v' \in \{v\}_*$ witness that $p$ is a piece of $u$ and $v$, then either $u'=v'$ or $|p| < \lambda \min(|u|, |v|)$.
\end{definition}
In this case we say that $R$ is a $C'(\lambda)$ language.  If $G$ is a group and $G=\langle S \mid R \rangle$ for some $C'(\lambda)$ language $R$, then $\langle S \mid R \rangle$ is called a $C'(\lambda)$ presentation and $G$ is called a $C'(\lambda)$ group.

In most treatments of the $C'(\lambda)$ condition, it is assumed that $R=R_*$, and a piece is defined to be a common prefix of two distinct words in $R$.  In our case, however, it is important to assume that $R$ is cyclically minimal (in particular $R \neq R_*$), in order to ensure that the signed $r$-face count (\cref{signedrFaceCount} below) is well defined.  For this reason we give the definition above, which, though not the usual definition of the $C'(\lambda)$ condition, is clearly equivalent.

\subsection{van Kampen Diagrams}\label{vanKampenDiagrams}

Let $\Gamma$ be a connected graph.  By a \emph{path} in $\Gamma$ we mean a combinatorial path, which may have repeated edges or vertices: in graph-theoretic terms, our `path' is really a walk.  Since points in the interiors of edges generally don't matter to us, we write $x \in \Gamma$ to mean that $x \in V(\Gamma)$.  Likewise, if $\alpha$ is a path in $\Gamma$, then $x \in \alpha$ means that $x$ is a vertex visited by $\alpha$.  

Let $\Gamma$ be any directed graph, and suppose that $\Lab : E(\Gamma) \to S_1$ (see (1) above) is a function which assigns labels from $S_1$ to the edges of $\Gamma$.  Then we extend $\Lab$ to a map from the set of all paths in $\Gamma$ to $S_\circ^*$ in the following natural way.
\begin{itemize}
\item If $e = (x, y)$ is a directed edge labeled $s$, then $\Lab(x, e, y) = s$ and $\Lab(y,e,x) = s^{-1}$.
\item If $\alpha = (x_0, e_1, x_1, \ldots, x_{n-1}, e_n, x_n)$ is a path, then 
$$\Lab(\alpha) = \Lab(x_0, e_1, x_1) \Lab(x_1, e_2, x_2) \cdots \Lab(x_{n-1}, e_n, x_n).$$ 
\end{itemize}

For a path $\alpha$ we define $\ell(\alpha)$, the length of $\alpha$, to be the number of edges traversed by $\alpha$, counting multiplicity.  Equivalently, $\ell(\alpha) = |\Lab(\alpha)|$.

A \emph{plane graph} is a graph which is topologically embedded in $\mathbb R^2$.  A \emph{face} of a plane graph $M$ is the closure of a connected component of $\mathbb R^2 \smallsetminus M$.  Let $F$ be a face of a finite directed plane graph $M$ with edges labeled by elements of $S_1$.  Choosing a base point $x \in \bd F$ and an orientation counterclockwise $(+)$ or clockwise $(-)$, there is a unique circuit which traverses $\bd F$ exactly once, called the \emph{boundary path} and denoted $(\bd F, x, \pm)$.  If all properties of $(\bd F, x, \pm)$ that we care about are preserved after changing its base point and orientation, then we leave these choices out of the notation and write $\bd F$.   The \emph{boundary label} of $F$ is $\Lab(\bd F, x, \pm)$, sometimes denoted by just $\Lab(\bd F)$.  We write $\bd M$ instead of $\bd F$ if $F$ is the unbounded face; from now on, `face' will mean `bounded face' unless otherwise stated.

\begin{definition}
A \emph{van Kampen diagram} over a presentation $\langle S \mid R \rangle$ is a finite, connected, directed plane graph $M$ with edges labeled by elements of $S_1$, such that if $F$ is a face of $M$, then either $\Lab(\bd F) \in R_*$ or $\Lab(\bd F) =_{F(S)} 1$.
\end{definition}

An edge is \emph{essential} if it is labeled by an element of $S$, and \emph{inessential} if it is labeled by 1.  A face $F$ is called \emph{essential} if $\Lab(\bd F) \in R_*$ and \emph{inessential} if $\Lab(\bd F)=_{F(S)} 1$.  If $R$ is cyclically reduced then these cases are mutually exclusive.  A face with boundary label $r \in R$ is called an $r$-face.  We call a van Kampen diagram \emph{bare} if it contains no inessential faces, and \emph{padded} otherwise.

A \emph{subdiagram} of a van Kampen diagram $M$ is a simply connected union of faces of $M$.  If $M$ is a van Kampen diagram and $D$ is a subdiagram of $M$, then we call $D$ \emph{simple} if $\bd D$ is a simple closed curve in the plane.  Likewise, a face $F$ of $M$ is called simple of $\bd F$ is a simple closed curve.  

Let $\alpha$ and $\beta$ be two paths in a van Kampen diagram.  Then we say that $\alpha \cap \beta$ is \emph{trivial} if it contains at most one vertex, and nontrivial otherwise. We say that $\alpha$ and $\beta$ intersect \emph{simply} if $\alpha \cap \beta$ a single subpath of both $\alpha$ and ($\beta$ or the reverse path of $\beta$).  Note that this is \textit{not} the same as saying that $\alpha \cap \beta$ is connected.  We apply this terminology to faces as well.  For example, if we say that $F$ and $\alpha$ intersect simply, it means that there is a choice of base point $x \in \bd F$ such that $(\bd F, x, +)$ and $\alpha$ intersect simply.  If we say that two faces $F$ and $F'$ intersect simply, it means that $(\bd F, x, +)$ and $(\bd F', x, -)$ intersect simply for some $x \in \bd F \cap \bd F'$.

Let $M$ be a van Kampen diagram, and suppose $F$ and $F'$ are distinct faces of $M$.  Then we say that $F$ and $F'$ \emph{cancel} if there exists an edge $e=(x,y)$ in $\bd F \cap \bd F'$ such that $\Lab(\bd F, x, +) = \Lab(\bd F', x, -)$.  A van Kampen diagram is called \emph{reduced} if no two of its faces cancel.  We have the following geometric interpretation of the $C'(\lambda)$ condition, which follows immediately from the definition.

\begin{lemma}\label{SmallCancellationGeometric}
Let $\langle S \mid R \rangle$ be a presentation where $R$ satisfies $C'(\lambda)$, and let $M$ be a van Kampen diagram over $\langle S \mid R \rangle$.  Suppose that $F, F'$ are essential faces of $M$ and $\alpha$ is a common subpath of $\bd F$ and $\bd F'$.  Then either $F$ and $F'$ cancel, or $\ell(\alpha) < \lambda \min (\ell(\bd F), \ell(\bd F'))$.
\end{lemma}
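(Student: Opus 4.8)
The plan is to unwind the combinatorial hypotheses into the literal statement of the $C'(\lambda)$ condition. First I would set up orientations and a base point. Since $F$ and $F'$ are \emph{distinct} faces both of whose boundary circuits contain all the edges of $\alpha$, the two faces lie on opposite sides of $\alpha$ in the plane; hence, after replacing $\alpha$ by its reverse if necessary, there is a base vertex $x$ (the initial vertex of $\alpha$) and a choice of orientations such that $\alpha$ is a prefix of $(\bd F, x, +)$ and also a prefix of $(\bd F', x, -)$ --- the opposite signs being forced precisely by the fact that distinct faces traverse a shared edge in opposite directions when both are oriented counterclockwise. Writing $p = \Lab(\alpha)$, $u' = \Lab(\bd F, x, +)$, and $v' = \Lab(\bd F', x, -)$, this says that $p$ is a common prefix of $u'$ and $v'$.

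Next I would feed this into the definition of $C'(\lambda)$. Since $F$ is essential, $\Lab(\bd F) \in R_*$, so $u' \in \{u\}_*$ for some $u \in R$ with $|u| = |u'| = \ell(\bd F)$; likewise $v' \in \{v\}_*$ for some $v \in R$ with $|v| = \ell(\bd F')$. Thus $p$ is a piece of $u$ and $v$, witnessed by $u'$ and $v'$, and $C'(\lambda)$ gives two cases. If $|p| < \lambda\min(|u|,|v|)$ then, as $|p| = \ell(\alpha)$, we get exactly $\ell(\alpha) < \lambda\min(\ell(\bd F), \ell(\bd F'))$, the second alternative. If instead $u' = v'$, then $\Lab(\bd F, x, +) = \Lab(\bd F', x, -)$; taking $e$ to be the first edge of $\alpha$, which lies in $\bd F \cap \bd F'$, together with base vertex $x$, this is verbatim the definition of $F$ and $F'$ cancelling.

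This is essentially bookkeeping, so I do not expect a genuine obstacle --- which is why the author labels it immediate. The one step meriting a sentence of care is the opening topological remark that distinct faces occupy opposite sides of a shared edge-path; this is the source of the ``$+$ on $\bd F$, $-$ on $\bd F'$'' asymmetry in the definition of ``cancel'', and it is exactly what lets the equality $u' = v'$ produced by $C'(\lambda)$ be recognized as a cancelling pair. One should also confirm the trivial points that shifting base points merely permutes $\{u\}_*$ and $\{v\}_*$ (leaving the piece relation intact), that relators being reduced forces $\alpha$ to use only essential edges so that $|\Lab(\alpha)| = \ell(\alpha)$, and that the length-zero case of $\alpha$ makes the stated inequality hold trivially.
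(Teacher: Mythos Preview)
Your proposal is correct and is precisely the unwinding the paper has in mind when it says the lemma ``follows immediately from the definition''; the paper gives no further proof. Your only superfluous remark is the one about essential edges: by the paper's convention $\ell(\alpha) = |\Lab(\alpha)|$ always, regardless of whether $\alpha$ traverses inessential edges, so no reducedness hypothesis is needed there.
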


Whenever $G$ is a group generated by $S$, the Cayley graph of $G$ with respect to $S$ is denoted $\Gamma(G,S)$.

\begin{lemma}[van Kampen Lemma]\cite[Chapter V, Section 1]{Lyndon_Schupp}\label{vKL}
Let $G = \langle S \mid R \rangle$ and $w \in S_\circ^*$.  Then $w=_G 1$ if and only if there exists a van Kampen diagram $M$ over $\langle S \mid R \rangle$ and $x \in \bd M$ such that $\Lab(\bd M, x, +)=w$.  Furthermore, given $g \in G$, there exists a combinatorial map $f: M \to \Gamma(G,S)$ preserving labels and orientations of edges, such that $f(x)=g$.  In particular, $f$ does not increase distances, i.e. is $1$-Lipschitz.
\end{lemma}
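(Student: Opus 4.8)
This is the classical van Kampen Lemma, so my plan is to reconstruct its standard proof in the terminology fixed above. There are two implications, and the map $f$ drops out of the proof of the ``if'' direction.

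\emph{The ``if'' direction and the ``furthermore'' clause.} Let $M$ be a van Kampen diagram over $\langle S\mid R\rangle$ with basepoint $x\in\bd M$ and $\Lab(\bd M,x,+)=w$, and fix $g\in G$. The crux is that \emph{every} closed edge-walk $\delta$ in $M$ satisfies $\overline{\Lab(\delta)}=1$ in $G$. I would prove this by reducing to the case that $\delta$ is simple and inducting on the number of faces of $M$ enclosed by $\delta$: if $\delta$ encloses no face it reads a word freely equal to $1$; otherwise, pushing $\delta$ across an outermost enclosed face $F$ expresses $\overline{\Lab(\delta)}$ as $\overline{\Lab(\delta')}$ times a conjugate of $\Lab(\bd F)$, where $\delta'$ encloses strictly fewer faces and $\Lab(\bd F)\in R_*$ or $\Lab(\bd F)=_{F(S)}1$, so represents $1$ in $G$ in either case. (Equivalently, one invokes the topological fact that attaching a disk along each bounded face of a connected plane graph yields a simply connected complex, so that $\pi_1(M)$ is normally generated by the face-boundary circuits.) Applying this to $\bd M$ gives $\bar w=1$, that is, $w=_G 1$. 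To build $f$, set $f(x)=g$ and, for a vertex $v$, choose an edge-path $\gamma$ from $x$ to $v$ and put $f(v)=g\cdot\overline{\Lab(\gamma)}$; this is independent of $\gamma$ by the statement above. Extend $f$ over edges by sending an essential edge labeled $s$ from $u$ to $v$ to the edge of $\Gamma(G,S)$ from $f(u)$ to $f(u)\bar s=f(v)$, and an inessential edge to the constant path at its image vertex. Then $f$ preserves labels and orientations, $f(x)=g$, and since every edge of $M$ maps to an edge or a point of $\Gamma(G,S)$, a length-$\ell$ path in $M$ maps to a path of length at most $\ell$; hence $f$ is $1$-Lipschitz.

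\emph{The ``only if'' direction.} If $w=_G 1$, then in $F(S)$ we can write $w=\prod_{i=1}^{N} g_i r_i^{\varepsilon_i}g_i^{-1}$ with $g_i\in F(S)$, $r_i\in R$, $\varepsilon_i\in\{\pm1\}$, and I would induct on $N$. For $N=0$, $w$ freely reduces to $\varepsilon$, and a finite plane tree whose outer boundary circuit reads $w$ (obtained by repeatedly grafting on folded pairs of edges) is a van Kampen diagram with no essential faces and boundary label $w$. For the inductive step, write $w$ freely as $g_1 r_1^{\varepsilon_1}g_1^{-1}w'$ with $w'=\prod_{i\ge2}g_i r_i^{\varepsilon_i}g_i^{-1}$, take inductively a diagram $M'$ whose boundary circuit represents $w'$, and graft onto it at its basepoint a lollipop --- an arc reading $g_1$ ending in a loop reading $r_1^{\varepsilon_1}$ bounding a single $r_1$-face --- drawn in the plane without crossings (possible, since one is wedging planar pieces at a point). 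The boundary circuit now reads a word $F(S)$-equal to $w$; absorbing the residual free cancellation with inessential faces (permitted exactly because the definition allows them), or with more folded edge-pairs, yields a van Kampen diagram with boundary label precisely $w$.

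The main obstacle is the one topological input above: that a closed edge-walk in a connected plane van Kampen diagram represents a product of conjugates of its face-boundary circuits, equivalently that filling in all its bounded faces gives a simply connected complex. Once this is granted, the rest --- keeping every diagram embedded in the plane, tracking the exact boundary \emph{word} rather than just its $F(S)$-class, and checking that $f$ respects labels and orientations and is $1$-Lipschitz --- is routine bookkeeping.
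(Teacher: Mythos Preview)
The paper does not prove this lemma; it simply states it and cites \cite[Chapter V, Section 1]{Lyndon_Schupp} as a reference, so there is no in-paper proof to compare against. Your reconstruction is the standard argument (and essentially the one found in Lyndon--Schupp): build the diagram from a product-of-conjugates expression via lollipops for the ``only if'' direction, and use simple connectivity of the filled-in diagram to show every closed walk has trivial label in $G$, which both gives $w=_G 1$ and makes the map $f$ to the Cayley graph well defined. The argument is correct; the only point worth tightening is your treatment of inessential edges when extending $f$: an edge labeled $1$ has endpoints mapping to the same vertex of $\Gamma(G,S)$, and you correctly collapse it, but you should note explicitly that this is compatible with ``preserving labels'' in the sense intended (the label $1$ corresponds to the trivial path), so that the $1$-Lipschitz claim still follows edge by edge.
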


\subsection{Operations on van Kampen diagrams}\label{vanKampenOps}

Given a van Kampen diagram $M$ over a presentation $\langle S \mid R \rangle$, there are various ways to deform $M$ within the plane to get another van Kampen diagram $M'$.  To check that the resulting graph $M'$ is really a van Kampen diagram, it suffices to show that the operation preserves connectedness and produces a planar embedding of $M'$.  If one also requires that $M'$ is a van Kampen diagram over the \emph{same} presentation, one needs to check that any new faces enclosed by the operation have a boundary label which is either in $R_*$ or equal to the identity in $F(S)$.  In this section we list a few operations on van Kampen diagrams that are needed for the proof of \cref{quasiGeod}.  In our case it will be necessary to keep track of how each operation affects the boundary label $\Lab(\bd M)$, as well as two quantities that we call the signed and unsigned $r$-face counts.

\begin{definition}\label{signedrFaceCount}
Let $M$ be a van Kampen diagram over a presentation $\langle S \mid R \rangle$, where $R$ is cyclically minimal.  Let $r \in R$. Then the (unsigned) \emph{$r$-face count} $\kappa(M,r)$ is the total number of $r$-faces in $M$.  
\end{definition}

\begin{definition}
Let $M$ be a van Kampen diagram over a presentation $\langle S \mid R \rangle$ where $R$ is cyclically minimal, and let  $r \in R$.  Then the \emph{signed $r$-face count} $\sigma(M,r)$ is defined as follows.
\begin{itemize}
\item If $F$ is a face of $M$, then
\[ \sigma(F, r) =
\begin{cases}
1 &\text{ if }\Lab(\bd F, x, +) = r \text{ for some }x \in \bd F \\
-1 &\text{ if } \Lab(\bd F, x, -) = r \text{ for some }x \in \bd F \\
0  &\text{ otherwise.}
\end{cases}
\]
\item $\sigma(M,r) = \sum \{\sigma(F,r) \mid F \text{ is a face of }M\}$.
\end{itemize}
\end{definition}

The assumption that $R$ is cyclically minimal ensures that each face contributes to the signed  or unsigned $r$-face count of at most one $r \in R$.  Note that if $F$ and $F'$ are two faces of $M$ that cancel with each other, then $\sigma(F,r) = -\sigma(F', r)$ for all $r \in R$.

\begin{operation}[Removing an inessential edge]\label{edgeDelete}
Suppose that $e = (x,y)$ is an inessential edge of a van Kampen diagram $M$ over a presentation $\langle S \mid R \rangle$, where $R$ is cyclically reduced and cyclically minimal, and $\Lab(\bd M)$ is cyclically reduced.  Then $e$ is on the boundary of exactly two inessential bounded faces.  There are two possibilities.
\begin{enumerate}[label=\normalfont(\alph*)]
\item If $x \neq y$, contract $e$ to remove it.  This will produce a connected, planar embedding of the new graph.  This changes two inessential faces with labels $1u$ and $1v$ to two inessential faces with labels $u$ and $v$.  Since $R$ is cyclically reduced, this does not affect the $r$-face count for any $r \in R$.
\item If $x=y$,
 delete $e$ to remove it.  Since $e$ is a loop, this will leave the graph connected.  This replaces two inessential faces on either side of $e$ with labels $u1$ and $1v$ with a single inessential face labeled $uv$.  Again since $R$ is cyclically reduced, this operation does not affect $\sigma(M,r)$ for any $r \in R$.
\end{enumerate}
Note that neither (a) nor (b) can introduce new self-intersections in the boundary path of any face of $M$.  Also, since $\Lab(\bd M)$ is cyclically reduced, neither operation affects $\Lab(\bd M)$.
\end{operation}

\begin{operation}[Removing a simple subdiagram with trivial boundary label]\label{diskDelete}
Let $M$ be a van Kampen diagram over $\langle S \mid R \rangle$, where $R$ and $\Lab(\bd M)$ are both cyclically reduced.  Suppose that $M$ contains a simple subdiagram $D$ such that $\bd D$ contains no inessential edges and $\Lab (\bd D)=_{F(S)} 1$.  Then $\bd D = \alpha_+ \alpha_-$, where $\Lab(\alpha_-) = \Lab(\alpha_+)^{-1}$.  We may then remove $D$ by replacing $D$ with a simple inessential face $F$ and deforming $\alpha_+$ onto $\alpha_-$ through the interior of $F$.  This does not affect the boundary label of $M$.
\end{operation}

\begin{figure}[h!]
\centering
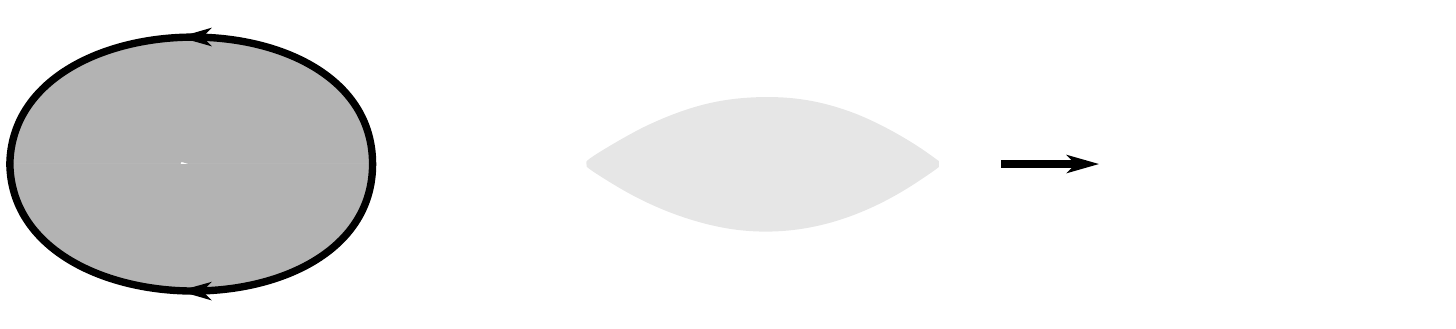
\caption{}
\label{diskDeleteFigure}
\end{figure}

Note that if $F$ and $F'$ are simple faces that intersect simply, and $F$ cancels with $F'$, then $F \cup F'$ is a simple subdiagram of $M$ with trivial boundary label, which may be removed by applying \cref{diskDelete}.  Perhaps surprisingly, \cref{diskDelete} does not always preserve the signed $r$-face count, as the following example shows.

\begin{example}
Figure \ref{nonZeroSumExample} depicts a van Kampen diagram $M$ over the presentation\\ $\langle a, b \mid a^2, aba^{-1}b\rangle$ with boundary label $bb^{-1}$, such that $\sigma(M, aba^{-1}b) = 2$.
\end{example}

\begin{figure}[h!]
\centering
\begingroup%
  \makeatletter%
  \providecommand\color[2][]{%
    \errmessage{(Inkscape) Color is used for the text in Inkscape, but the package 'color.sty' is not loaded}%
    \renewcommand\color[2][]{}%
  }%
  \providecommand\transparent[1]{%
    \errmessage{(Inkscape) Transparency is used (non-zero) for the text in Inkscape, but the package 'transparent.sty' is not loaded}%
    \renewcommand\transparent[1]{}%
  }%
  \providecommand\rotatebox[2]{#2}%
  \newcommand*\fsize{\dimexpr\f@size pt\relax}%
  \newcommand*\lineheight[1]{\fontsize{\fsize}{#1\fsize}\selectfont}%
  \ifx\svgwidth\undefined%
    \setlength{\unitlength}{141.73212126bp}%
    \ifx\svgscale\undefined%
      \relax%
    \else%
      \setlength{\unitlength}{\unitlength * \real{\svgscale}}%
    \fi%
  \else%
    \setlength{\unitlength}{\svgwidth}%
  \fi%
  \global\let\svgwidth\undefined%
  \global\let\svgscale\undefined%
  \makeatother%
  \begin{picture}(1,0.87852991)%
    \lineheight{1}%
    \setlength\tabcolsep{0pt}%
    \put(0,0){\includegraphics[width=\unitlength,page=1]{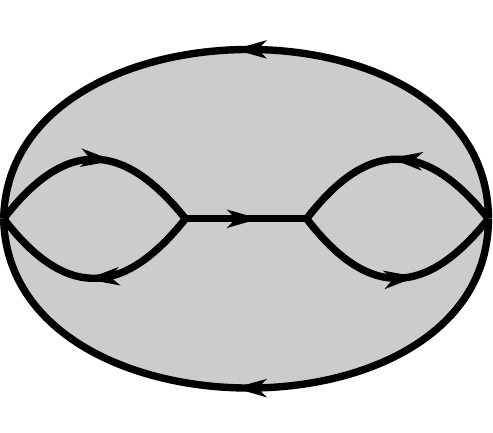}}%
    \put(0.20643582,0.58475347){\color[rgb]{0,0,0}\makebox(0,0)[t]{\lineheight{1.25}\smash{\begin{tabular}[t]{c}$a$\end{tabular}}}}%
    \put(0.20643582,0.23701496){\color[rgb]{0,0,0}\makebox(0,0)[t]{\lineheight{1.25}\smash{\begin{tabular}[t]{c}$a$\end{tabular}}}}%
    \put(0.82253776,0.59609278){\color[rgb]{0,0,0}\makebox(0,0)[t]{\lineheight{1.25}\smash{\begin{tabular}[t]{c}$a$\end{tabular}}}}%
    \put(0.82253776,0.22945544){\color[rgb]{0,0,0}\makebox(0,0)[t]{\lineheight{1.25}\smash{\begin{tabular}[t]{c}$a$\end{tabular}}}}%
    \put(0.50125763,0.81531922){\color[rgb]{0,0,0}\makebox(0,0)[t]{\lineheight{1.25}\smash{\begin{tabular}[t]{c}$b$\end{tabular}}}}%
    \put(0.50125763,0.0064492){\color[rgb]{0,0,0}\makebox(0,0)[t]{\lineheight{1.25}\smash{\begin{tabular}[t]{c}$b$\end{tabular}}}}%
    \put(0.50125763,0.4940391){\color[rgb]{0,0,0}\makebox(0,0)[t]{\lineheight{1.25}\smash{\begin{tabular}[t]{c}$b$\end{tabular}}}}%
    \put(0,0){\includegraphics[width=\unitlength,page=2]{SCAD11.pdf}}%
  \end{picture}%
\endgroup%

\caption{}
\label{nonZeroSumExample}
\end{figure}

However, \cref{diskDelete} does preserve the signed $r$-face count of van Kampen diagrams over $C'(\sfrac{1}{6})$ presentations.  This is because $C'(\sfrac{1}{6})$ presentations are aspherical.  The definition of a spherical van Kampen diagram is the same as that of a van Kampen diagram with $\mathbb R^2$ replaced by $S^2$: in particular, every face is bounded.  A presentation $\langle S \mid R \rangle$ is \emph{aspherical} if  every bare spherical van Kampen diagram over $\langle S \mid R \rangle$ contains a pair of faces that cancel.  The following is a special case of a lemma of Olshanskii.

\begin{lemma}\cite[Lemma 31.1 part 2)]{Olshanskii}\label{ZeroSignedSum}
Let $\langle S \mid R \rangle$ be an aspherical presentation, and suppose that $M$ is a van Kampen diagram over $\langle S \mid R \rangle$ with boundary label $w$, where $w=_{F(S)} 1$.  Then $\sigma(M,r) = 0$ for all $r \in R$.
\end{lemma}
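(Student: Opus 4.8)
The plan is to reduce to the case of a \emph{spherical} van Kampen diagram and then induct on the number of faces, using the defining property of asphericity to peel off cancelling pairs of faces. For the reduction: since $w =_{F(S)} 1$ and $M$ is a plane graph, we may regard $M$ as sitting in $S^2 = \mathbb R^2 \cup \{\infty\}$, oriented compatibly with the plane. The previously unbounded face then becomes a bounded face $F_\infty$ whose boundary label, read with the orientation inherited from $S^2$ and from the base point witnessing $\Lab(\bd M) = w$, is $w^{-1}$, hence trivial in $F(S)$. So $F_\infty$ is inessential, the resulting object $\widehat M$ is a spherical van Kampen diagram over $\langle S \mid R \rangle$, and since an inessential face contributes $0$ to every signed face count we get $\sigma(\widehat M, r) = \sigma(M, r)$ for all $r \in R$. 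It therefore suffices to show that $\sigma(\widehat M, r) = 0$ for every spherical van Kampen diagram $\widehat M$ over an aspherical presentation.

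I would prove this by induction on the number of faces of $\widehat M$. If $\widehat M$ has no essential faces, then $\sigma(\widehat M, r) = 0$ for every $r$ and we are done. Otherwise, first remove all inessential faces and edges of $\widehat M$ using the spherical analogues of \cref{edgeDelete} and \cref{diskDelete}; these operations leave at least one, necessarily essential, face and change no $\sigma(\widehat M, r)$, since inessential material never contributes to a signed count. The resulting bare spherical diagram contains, by the definition of asphericity, a pair of faces $F, F'$ that cancel, and then $\sigma(F, r) = -\sigma(F', r)$ for every $r \in R$, as noted just after the definition of the signed face count. Remove this cancelling pair by the usual surgery: along an edge $e$ witnessing the cancellation, fold $\bd F$ onto the reverse of $\bd F'$, collapsing $F \cup F'$ to an inessential region, and clean up inessential faces and edges as before. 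The output is a spherical van Kampen diagram with strictly fewer faces whose signed $r$-face counts agree with those of $\widehat M$ for every $r$ (we only ever inserted or deleted inessential faces together with the cancelling pair $F, F'$). The inductive hypothesis applies, and unwinding the reduction gives $\sigma(M, r) = 0$.

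The main obstacle is the surgery that removes a cancelling pair while keeping everything a van Kampen diagram on $S^2$: in general $F$ and $F'$ need not intersect simply, their boundary paths need not be simple closed curves, and other parts of the diagram may separate them, so one may first have to cut $\widehat M$ open along an auxiliary path before folding along $e$. This is exactly the bookkeeping carried out in Olshanskii's Lemma 31.1. The point of working with the \emph{signed} count $\sigma$ rather than the unsigned count $\kappa$ is precisely that $\sigma$ is invariant under this move — the two faces removed contribute $\sigma(F,r) + \sigma(F',r) = 0$ — and is unaffected by the inessential faces and edges that appear and disappear along the way, whereas $\kappa$ drops by $2$. (Alternatively one can argue topologically: combinatorial asphericity forces the presentation $2$-complex $X$ to be aspherical, so the label-preserving cellular map $\widehat M \to X$ out of $S^2$ is null-homotopic; the $2$-chain it induces — whose coefficient at the $2$-cell of $X$ corresponding to $r$ is $\sigma(\widehat M, r)$ — is therefore a boundary, hence zero since $X$ has no $3$-cells.)
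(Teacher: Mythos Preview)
The paper does not supply its own proof of this lemma; it is quoted directly from Olshanskii \cite[Lemma 31.1 part 2)]{Olshanskii} and used as a black box. Your sketch is the standard argument and matches the route taken in Olshanskii: compactify to $S^2$ using $w =_{F(S)} 1$ so that the outer face becomes inessential, then induct on the number of essential faces by invoking asphericity to locate a cancelling pair and excising it. You have correctly flagged the one genuinely nontrivial step --- the surgery on a cancelling pair $F, F'$ that may fail to be simple or to intersect simply --- and you are right that this is precisely what Olshanskii's proof handles. Your parenthetical homological argument (the map $S^2 \to X$ is null-homotopic, hence the pushforward $2$-cycle vanishes, and since $X$ has no $3$-cells this forces each coefficient $\sigma(\widehat M, r)$ to be zero) is also valid, once one accepts that combinatorial asphericity in the sense defined here implies $\pi_2(X) = 0$; that implication is itself usually proved by exactly the cancelling-pair reduction you outline, so the two arguments are really the same at bottom.
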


\begin{operation}[Padding a vertex]\label{padVertex}
Suppose that $x$ is a vertex of $M$ which appears twice in the boundary path of some face (bounded or unbounded) of $M$.  Choose $\varepsilon > 0$ small enough so that $B(x, \varepsilon) \subset \mathbb R^2$ contains only the ends of edges incident to $x$.  Now $B(x, \varepsilon) \smallsetminus M$ consists of finitely many connected components: let these be denoted $C_0, C_1,  \ldots, C_k$.  For each $i \in \{0, \ldots, k\}$, insert a clone $x_i$ of $x$ into $C_i$, and connect it to $x$ with an inessential edge.  Then duplicate the edges on either side of $x_i$, attaching the endpoint meant for $x$ to $x_i$ instead: see Figure \ref{padVertexFigure}.  The resulting graph has the same essential faces and boundary path as $M$, and one fewer vertex that is a point of self-intersection of the boundary path of a face.  Each new inessential face has boundary label $1ss^{-1}$ for some $s \in S$.
\end{operation}

\begin{figure}[h!]
\centering
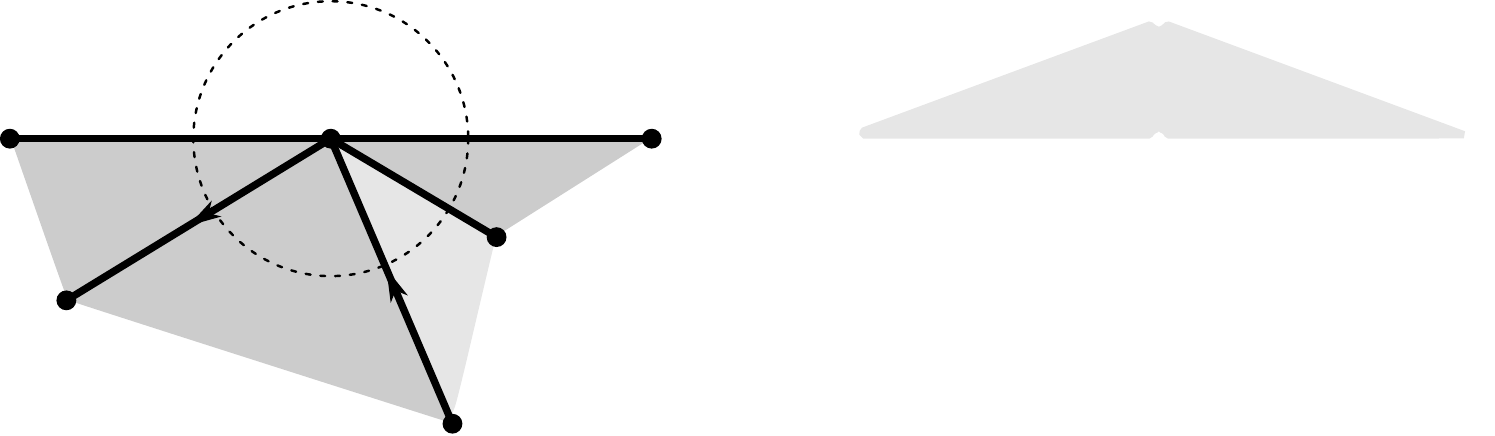
\caption{}
\label{padVertexFigure}
\end{figure}

\begin{operation}[Quotienting simple faces]
Suppose that $G = \langle S \mid R_G \rangle$ and $H = \langle S \mid R_H \rangle$ is a quotient of $G$, so every word in $R_G$ represents the identity element of $H$.  Suppose that $M_G$ is a van Kampen diagram over $\langle S \mid R_G \rangle$.  Let $F$ be a simple face of $M_G$, and let $M_F$ be a chosen van Kampen diagram over $\langle S \mid R_H \rangle$ with boundary label $\Lab(\bd F)$.  Then we may quotient $F$ to a copy of $M_F$ without affecting the boundary label of $M_G$: see Figure \ref{quotientFigure}.  Applying this operation once produces a van Kampen diagram over $\langle S \mid R_G \cup R_H \rangle$.  If $F$ is the last face of $M_G$ with label in $R_G \smallsetminus R_H$, then this results in a van Kampen diagram over $\langle S \mid R_H \rangle$. Thus, if this operation can be applied to every essential face of $M_G$ in sequence, then we obtain a ``quotient van Kampen diagram" $M_H$  over $\langle S \mid R_H \rangle$ with the same boundary label as $M_G$.
\end{operation}

\begin{figure}[h!]
\centering
\begingroup%
  \makeatletter%
  \providecommand\color[2][]{%
    \errmessage{(Inkscape) Color is used for the text in Inkscape, but the package 'color.sty' is not loaded}%
    \renewcommand\color[2][]{}%
  }%
  \providecommand\transparent[1]{%
    \errmessage{(Inkscape) Transparency is used (non-zero) for the text in Inkscape, but the package 'transparent.sty' is not loaded}%
    \renewcommand\transparent[1]{}%
  }%
  \providecommand\rotatebox[2]{#2}%
  \newcommand*\fsize{\dimexpr\f@size pt\relax}%
  \newcommand*\lineheight[1]{\fontsize{\fsize}{#1\fsize}\selectfont}%
  \ifx\svgwidth\undefined%
    \setlength{\unitlength}{458.87210876bp}%
    \ifx\svgscale\undefined%
      \relax%
    \else%
      \setlength{\unitlength}{\unitlength * \real{\svgscale}}%
    \fi%
  \else%
    \setlength{\unitlength}{\svgwidth}%
  \fi%
  \global\let\svgwidth\undefined%
  \global\let\svgscale\undefined%
  \makeatother%
  \begin{picture}(1,0.26083129)%
    \lineheight{1}%
    \setlength\tabcolsep{0pt}%
    \put(0,0){\includegraphics[width=\unitlength,page=1]{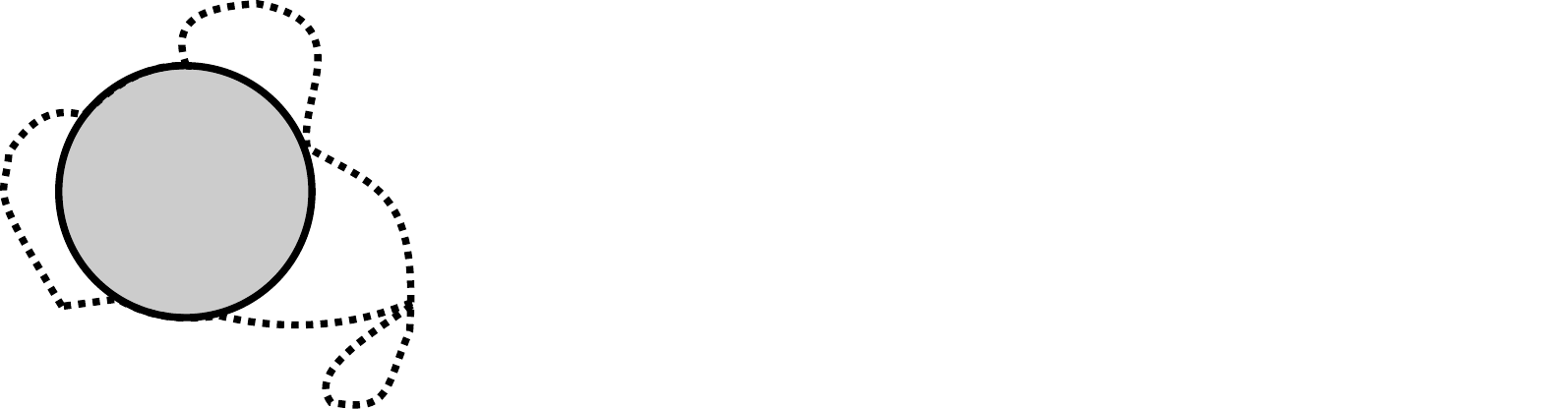}}%
    \put(0.11845292,0.13823232){\color[rgb]{0,0,0}\makebox(0,0)[t]{\lineheight{1.25}\smash{\begin{tabular}[t]{c}$F$\end{tabular}}}}%
    \put(0.0442233,0.23631982){\color[rgb]{0,0,0}\makebox(0,0)[t]{\lineheight{1.25}\smash{\begin{tabular}[t]{c}$M_G$\end{tabular}}}}%
    \put(0,0){\includegraphics[width=\unitlength,page=2]{SCAD17.pdf}}%
    \put(0.95519941,0.09146734){\color[rgb]{0,0,0}\makebox(0,0)[t]{\lineheight{1.25}\smash{\begin{tabular}[t]{c}$M_F$\end{tabular}}}}%
    \put(0,0){\includegraphics[width=\unitlength,page=3]{SCAD17.pdf}}%
  \end{picture}%
\endgroup%

\caption{}
\label{quotientFigure}
\label{M_HFigure}
\end{figure}

\begin{operation}[Excising a subpath of $\bd M$]\label{pathExcise}
Let $M$ be a van Kampen diagram over a presentation $\langle S \mid R \rangle$, where $R$ is cyclically minimal and cyclically reduced.  Let $z \in \bd M$, and suppose we can write $(\bd M, z, +)$ as $\alpha * \beta$, where $\alpha$ and $\beta$ are paths of positive length.  Suppose that $\alpha = \alpha_0 * \rho * \alpha_1$, where $\Lab(\rho)$ is a cyclic shift of $r^{\pm 1}$ for some $r \in R$. Let $x$ be the initial and $y$ the terminal vertex of $\rho$, and suppose $x \neq y$.  Then we may contract $x$ to $y$ through the unbounded face, identifying the two vertices to obtain a new van Kampen diagram $M'$.  Now $M'$ has exactly one new face $F'$, where $(\bd F', x, -) = \rho$, so $M'$ is a van Kampen diagram over  the same presentation $\langle S \mid R \rangle$.  Also, $(\bd M', z, +) = \alpha' * \beta$, where $\alpha' = \alpha_0 * \alpha_1$: see Figure \ref{pathExciseFigure}.  Note that $\rho$ may intersect itself: in that case $\bd F'$ will have self-intersections in $M'$, but this is fine.  The only topological feature of $M$ which is essential to this operation is that $x$ and $y$ are distinct.

\begin{figure}[h!]
\centering
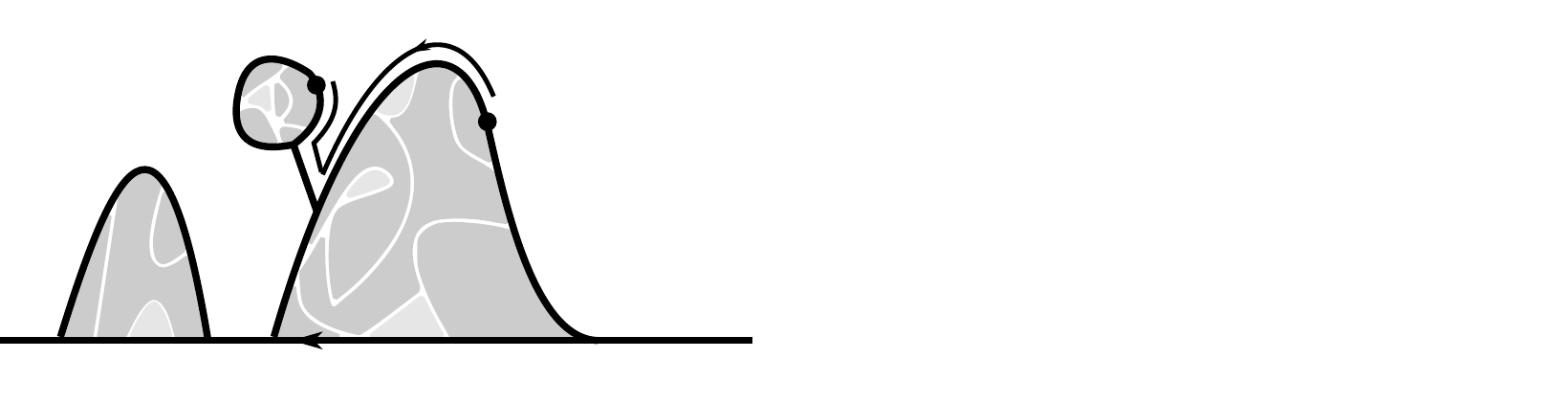
\caption{}
\label{pathExciseFigure}
\end{figure}

Now $\ell(\alpha') = \ell(\alpha)-|r|$, and $\beta$ is unaffected by the operation.  Also, for all $r' \in R$,
\begin{align*}
\kappa(M',r') &=
\begin{cases}
\kappa(M,r')+1 &\text{ if } r'=r\\
\kappa(M,r') &\text{ otherwise.}
\end{cases}\\
\sigma(M',r') &=
\begin{cases}
\sigma(M,r')-1 &\text{ if } r'=r \text{ and }\Lab(\rho) \text{ is a cyclic shift of } r\\
\sigma(M,r')+1 &\text{ if } r'=r \text{ and }\Lab(\rho) \text{ is a cyclic shift of } r^{-1}\\
\sigma(M, r') &\text{ otherwise.}
\end{cases}
\end{align*}
\end{operation}

Note that, since $R$ is cyclically reduced, these last three cases are all distinct.  Indeed, it is an easy exercise to show that if a word $r \in R$ is a cyclic shift of $r^{-1}$, then $r$ is not reduced.

\subsection{Reductions that preserve signed $r$-face counts}\label{vKDsOverC'1-6}

Later we will need to use \cref{perimeterSum}, a  result that applies only to bare, reduced van Kampen diagrams over $C'(\sfrac{1}{6})$ presentations.  At the same time, we would like to apply this result to van Kampen diagrams with signed $r$-face counts that are carefully controlled.  Thus, we need to establish a method of taking a van Kampen diagram over a $C'(\sfrac{1}{6})$ presentation, and making it bare and reduced without affecting the signed $r$-face counts.  In this subsection, we develop such a process, which is encapsulated in \cref{WLOG4}.  We then prove \cref{M_HConstruction}, which allows us to construct certain `quotient' van Kampen diagrams with controlled $r$-face counts. 

\begin{lemma}\label{WLOG1}
Let $M$ be a van Kampen diagram over $\langle S \mid R \rangle$ such that $R$ is cyclically minimal and cyclically reduced, and $\Lab(\bd M)$ is cyclically reduced.  Then there exists a van Kampen diagram $M'$ such that all of the following conditions hold.
\begin{enumerate}[label=\normalfont(\alph*)]
\item $\Lab(\bd M') = \Lab(\bd M)$.
\item $\sigma(M',r) = \sigma(M,r)$ for all $r \in R$.
\item Every inessential face of $M'$ has boundary label $ss^{-1}$ or $1ss^{-1}$ for some $s \in S$.
\item All inessential edges of $M'$ are loops.
\end{enumerate}
\end{lemma}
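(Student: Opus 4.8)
The plan is to obtain $M'$ from $M$ by finitely many applications of the operations of \cref{vanKampenOps} — chiefly \cref{edgeDelete}, \cref{padVertex}, and \cref{diskDelete} — interspersed with the elementary move of drawing a new edge, labeled by $1$ or by an element of $S$, across a single face (legitimate here since it produces a van Kampen diagram over the same presentation whenever the two faces it creates carry labels trivial in $F(S)$). The key preliminary observation is that conditions (a) and (b) come for free. Since $R$ is cyclically reduced, no word of the form $1$, $ss^{-1}$, or $1ss^{-1}$ is a cyclic shift of any $r\in R$, and since $\Lab(\bd M)$ is cyclically reduced no inessential edge lies on $\bd M$. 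Hence every operation below — all of which alter only inessential faces and edges and never touch an essential face or its orientation — preserves $\Lab(\bd M)$ and every signed count $\sigma(M,r)$. (When \cref{diskDelete} is invoked we will apply it only to a simple subdiagram consisting of a single inessential face $F$; deleting such an $F$ changes $\sigma(\cdot,r)$ by $-\sigma(F,r)=0$, so no harm is done, in contrast to the general situation flagged by the example following that operation.) It therefore suffices to reach a diagram satisfying (c) and (d).

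First I would normalize the inessential faces, processing them one at a time while tracking a complexity such as the sum of the boundary lengths of the inessential faces whose label is not yet $ss^{-1}$ or $1ss^{-1}$ (refined lexicographically if need be so that it strictly decreases). Given such a face $F$: apply \cref{padVertex} until every face — in particular $F$ — is simple, so that $F$ is a topological disk; this introduces only new inessential faces with label $1ss^{-1}$, already of the allowed type, so the complexity does not go up. If now $\Lab(\bd F)=1$, then $F$ is a monogon bounded by an inessential loop, deleted by \cref{edgeDelete}(b). Otherwise $w:=\Lab(\bd F)$ has length at least $2$ with $\bar w=_{F(S)}1$, and I either (i) use \cref{diskDelete} on $D=F$, when some reading of $\bd F$ exhibits $w$ in the form $uu^{-1}$, removing $F$ and replacing it by a dangling tree of essential edges; or (ii) find in $\bd F$ a pair of consecutive, cancelling edges and, joining its two outer endpoints by a new inessential edge through the interior of $F$, split $F$ into a face whose label is $ss^{-1}1$ (a cyclic shift of $1ss^{-1}$, hence allowed) and a face whose boundary label is strictly shorter. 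Iterating brings every inessential face to the form demanded by (c).

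Finally I would clean up the inessential edges. After the previous step every inessential face is of type $ss^{-1}$ or $1ss^{-1}$, so contracting each non-loop inessential edge by \cref{edgeDelete}(a) merely converts a $1ss^{-1}$ face into an $ss^{-1}$ face and creates no new non-loop inessential edge and no face outside the allowed list; one then checks that no inessential loop can survive on the boundary of either allowed face type, so \cref{edgeDelete}(b) disposes of any residual inessential loops. The resulting diagram has no inessential edges at all — so (d) holds — and still satisfies (a), (b), (c).

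The step I expect to be the main obstacle is making the face-normalization both terminate and genuinely land inside the list permitted by (c): the complexity measure and the order of operations must be chosen with care, because \cref{padVertex} reintroduces inessential edges and can lengthen the boundary labels of not-yet-normalized faces, and the cancelling-pair surgery of case (ii) must be shown to be available — possibly only after first rerouting $\bd F$ through a new vertex — precisely when $F$ is not of the $uu^{-1}$ type handled by (i); establishing this dichotomy between cases (i) and (ii), and hence the exhaustiveness of the procedure, is the delicate combinatorial core of the argument. Verifying in each individual move that $\Lab(\bd M)$ and every $\sigma(M,r)$ are really preserved is then routine, given the bookkeeping recorded in \cref{vanKampenOps}.
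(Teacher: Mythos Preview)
Your approach diverges from the paper's and carries a real gap in the face-normalization step. The paper's route to (c) is shorter: for a bad inessential face $F$, pad vertices until $F$ is simple, then use \cref{edgeDelete}(a) to remove every inessential edge from $\bd F$ (each such edge has distinct endpoints, since $\bd F$ is a simple closed curve of length $\geq 2$), and finally apply \cref{diskDelete} to the single-face subdiagram $D=F$. No new ``drawing-an-edge'' operation is needed, and there is no dichotomy to establish.

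The gap in your dichotomy is not merely a delicacy to be worked out later. After padding, $\Lab(\bd F)$ can contain inessential letters between essential ones, for instance $a\,1\,a^{-1}$. Case~(i) is then blocked because \cref{diskDelete} explicitly requires $\bd D$ to carry no inessential edges; case~(ii) is blocked because there is no \emph{consecutive} cancelling pair $ss^{-1}$ in the label. The obstruction dissolves once you strip the $1$'s from $\bd F$ first via \cref{edgeDelete}(a)---but that is precisely the step that collapses your (i)/(ii) into a single direct application of \cref{diskDelete}, which is what the paper does.

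Your cleanup step also overshoots. Condition (d) asks only that inessential edges be loops, not that there be none. Applying \cref{edgeDelete}(b) to an inessential loop shared by two faces of type $1ss^{-1}$ (the configuration in Figure~\ref{Bigon}) would merge them into a single face whose label has four essential letters, destroying (c). The paper simply applies \cref{edgeDelete}(a) until every remaining inessential edge is a loop, and stops.
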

\begin{proof}
Let $I$ be the set of all inessential faces in $M$ whose boundary labels are not equal to $1ss^{-1}$ or $ss^{-1}$ for some $s \in S$.  Let $F \in I$.  If $\bd F$ consists of a single inessential edge loop, we simply contract this loop to remove $F$; it is easy to see that this preserves the boundary label as well as the signed and unsigned face counts of $M$, and whether $M$ satisfies (c) or (d).  Therefore we assume that $\bd F$ is at least two edges long.  Now we repeatedly pad vertices of $\bd F$ (\cref{padVertex}) until $F$ is simple. Since each inessential face added in the process has boundary label $1ss^{-1}$ for some $s \in S$, this does not increase $|I|$.  

We claim that, without loss of generality, we may assume that $\bd F$ contains no inessential edges.  Suppose that $\bd F$ contains an inessential edge $e$.  Since $\bd M$ and $R$ are both cyclically reduced, $e$ lies on the boundary of exactly two inessential, bounded faces, one of which is $F$: call the other one $F'$.  Then for some $u, u' \in S_\circ^*$ we have $\Lab(F) = 1u$ and $\Lab(F') = 1u'$.  We know that $e$ must have distinct endpoints since $\bd F$ is a simple closed curve and $\ell(\bd F) \geq 2$.  Therefore we may remove $e$ using \cref{edgeDelete} (a).  This changes the boundary label of $F$ from $1u$ to $u$, and the boundary label of $F'$ from $1u'$ to $u'$.  Thus  it does not change whether or not $F$ or $F'$ is a member of $I$.  Therefore removing $e$ does not change $|I|$, and without loss of generality we may assume that $\bd F$ contains no inessential edges.  

Since $F$ is simple, $\Lab(\bd F) =_{F(S)} 1$, and $\bd F$ contains no inessential edges, we may remove $F$ using \cref{diskDelete}.  This reduces $|I|$ by 1.  Since $\Lab(\bd M)$ is cyclically reduced, none of the previous operations affect $\Lab(\bd M)$.  Since only inessential faces were removed, and $R$ is cyclically reduced, $\sigma(M,r)$ is also preserved for all $r \in R$.  Repeating this process, we obtain a diagram $M'$ for which (a) and (b) hold and $|I| = 0$, i.e. such that (a)-(c) hold.  At this point we may repeatedly apply \cref{edgeDelete} (a) to remove all inessential edges of $M'$ with distinct endpoints, so that (d) holds in $M'$.  Reasoning as in the previous paragraph, one can see that this does not interfere with conditions (a)-(c).  Thus (a)-(d) hold in $M'$, finishing the construction. 
\end{proof}

\begin{lemma}\label{WLOG2}
Let $G$ be a group given by presentation $\langle S \mid R \rangle$, where $R$ is cyclically minimal and cyclically reduced, and $s \neq_G 1$ for any $s \in S$.  Let $M$ be a van Kampen diagram over $\langle S \mid R \rangle$ such that $\Lab(\bd M)$ is cyclically reduced.  Then there exists a van Kampen diagram $M'$ over $\langle S \mid R \rangle$ such that all of the following conditions hold.
\begin{enumerate}[label=\normalfont(\alph*)]
\item $\Lab(\bd M') = \Lab(\bd M)$.
\item $\sigma(M',r) = \sigma(M, r)$ for all $r \in R$.
\item Every inessential face of $M'$ is contained in a simple subdiagram whose boundary label is equal to $ss^{-1}$ for some $s \in S$.
\end{enumerate}
\end{lemma}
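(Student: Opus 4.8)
The plan is to bootstrap off \cref{WLOG1}. Its hypotheses ($R$ cyclically minimal and cyclically reduced, $\Lab(\bd M)$ cyclically reduced) are all among ours, so we may begin by replacing $M$ with the diagram it produces; thus \emph{from now on we assume that every inessential face of $M$ has boundary label $ss^{-1}$ or $1ss^{-1}$ for some $s \in S$, and that every inessential edge of $M$ is a loop}. Since \cref{WLOG1} already gives (a) and (b), it suffices to perform a further sequence of operations, each of which (i) affects only inessential faces and inessential edges and (ii) does not touch $\bd M$, and to arrange at the end that every inessential face lies in a simple subdiagram with boundary label $ss^{-1}$. Property (i) forces (b) to persist, since any face created, destroyed, or merged along the way is inessential and hence contributes $0$ to $\sigma(\,\cdot\,,r)$ for every $r \in R$; property (ii) together with $\Lab(\bd M)$ being cyclically reduced forces (a) to persist.

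First dispose of the faces with label $ss^{-1}$, using the hypothesis $s \neq_G 1$. Since $G = \langle S \mid R \rangle$, there is a label- and orientation-preserving combinatorial map $f \colon M \to \Gamma(G,S)$: build it directly (as in the proof of \cref{vKL}) by sending one vertex to an arbitrary point and propagating along edges, which is well defined because the relators hold in $G$. An essential loop of $M$ — an edge $e=(x,x)$ labelled by some $s \in S$ — would map to an edge of $\Gamma(G,S)$ from $f(x)$ to $f(x)s$, and a loop forces $f(x)=f(x)s$, i.e. $s =_G 1$, a contradiction. So $M$ has no essential loops. Consequently, if $F$ is an inessential face with $\Lab(\bd F)=ss^{-1}$, its two $s$-labelled boundary edges are distinct edges running between two distinct vertices, so $\{F\}$ is itself a simple subdiagram whose boundary label is $ss^{-1}$, and (c) holds for $F$.

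It remains to eliminate the faces with label $1ss^{-1}$. Such a face carries an inessential ``corner loop'' $e_0$, whose presence is exactly what obstructs (c). Because $\Lab(\bd M)$ and $R$ are cyclically reduced, $e_0$ cannot lie on $\bd M$ and cannot border an essential face; hence it borders exactly two inessential bounded faces and can be removed by \cref{edgeDelete} (b), which merges those two faces into a single inessential face and leaves $\Lab(\bd M)$, and every $\sigma(\,\cdot\,,r)$, untouched. The merged face need no longer have boundary label of the special form above, but it can be put back into that form by re-running the cleanup steps of \cref{WLOG1} — padding vertices (\cref{padVertex}), deleting inessential edges (\cref{edgeDelete}), and removing simple subdiagrams with trivial boundary label (\cref{diskDelete}). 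Each such round removes at least one inessential loop and, after the re-cleanup, introduces no new ones, so the total number of inessential edges of the diagram strictly decreases; after finitely many rounds no inessential edge remains, at which point every inessential face has boundary label $ss^{-1}$ and, by the previous paragraph, is a bigon satisfying (c). This produces the desired $M'$.

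The step I expect to be the main obstacle is the termination bookkeeping in the last paragraph: one must check carefully that after deleting an inessential loop and re-applying the operations of \cref{WLOG1}, the diagram is genuinely back in standard form with \emph{strictly fewer} inessential edges — i.e. that merging two $1ss^{-1}$ faces and then re-simplifying cannot resurrect an inessential loop — so that the induction on the number of inessential edges really closes. Everything else reduces to routine verification that the listed operations leave $\bd M$ and the signed face counts unchanged.
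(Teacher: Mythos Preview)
Your approach and the paper's agree through the reduction to \cref{WLOG1} and the treatment of $ss^{-1}$ faces; they diverge on the $1ss^{-1}$ faces. The paper does \emph{not} try to eliminate them. Instead it observes that the output of \cref{WLOG1} already satisfies (c). If $F$ has $\Lab(\bd F)=1aa^{-1}$ with inessential loop $e$ based at $x$, then the face $F'$ on the other side of $e$ is also inessential (since $\Lab(\bd M)$ and $R$ are cyclically reduced) and, having an inessential edge on its boundary, must by \cref{WLOG1}(c) have label $1bb^{-1}$. Since $e$ is a loop while the $a$- and $b$-edges are not (your own $s \neq_G 1$ argument), the local picture is completely determined: one of $F,F'$ lies inside the disk bounded by $e$, the other outside, and the outer one is flanked by a bigon of two parallel $a$-edges (respectively $b$-edges). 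That bigon is a simple closed curve enclosing both $F$ and $F'$, and the disk it bounds is the required simple subdiagram with boundary label $aa^{-1}$ (respectively $bb^{-1}$).

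Your alternative route---delete the loop $e$, then re-run \cref{WLOG1}---hits exactly the difficulty you flag. Deleting $e$ merges $F$ and $F'$ into a face whose boundary label is $aa^{-1}bb^{-1}$ and whose boundary path passes through $x$ twice, so the merged face is not simple. Re-running \cref{WLOG1} then pads $x$, which reintroduces inessential edges and fresh $1ss^{-1}$ faces; your proposed counter (total number of inessential edges) does not obviously decrease. A finer invariant might salvage this, but the local analysis you would need is precisely the bigon picture above---at which point (c) follows directly and the extra surgery is superfluous.
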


\begin{proof}
We may assume that we have a van Kampen diagram $M'$ that satisfies (a)-(d) of \cref{WLOG1}.  We prove here that, in the presence of the assumption that $s \neq_G 1$ for all $s \in S$, it follows that $M'$ also satisfies conclusion (c) of the current lemma.

Let $F$ be an inessential face of $M'$.  There are two cases: either $\Lab(\bd F) = ss^{-1}$ or $\Lab(\bd F) = 1ss^{-1}$ for some $s \in S$.  

If $\Lab(\bd F) = ss^{-1}$, then since $s \neq_G 1$, we have that $F$ is simple.  Thus $F$ itself is a simple subdiagram of $M'$ which contains $F$ and has boundary label $ss^{-1}$.

Suppose on the other hand that $\Lab(\bd F) = 1aa^{-1}$, where $a \in S$.  Let $e$ be the inessential edge of $\bd F$.  Since $\bd M'$ and $R$ are both cyclically reduced, $e$ lies on the boundary paths of exactly two inessential, bounded faces, one of which is $F$: call the other one $F'$. Since $F'$ is an inessential face of $M'$ with an inessential edge on its boundary path, we have that $\Lab(F') = 1bb^{-1}$ for some $b \in S$.  Now $e$ is a loop since $M'$ satisfies (d) of \cref{WLOG1}.  Since $s \neq_G 1$, the endpoints of each of the $a$-labeled edges of $\bd F$ are distinct: similarly for the $b$-labeled edges of $\bd F'$.  Therefore $F \cup F'$ takes the form depicted in Figure \ref{Bigon}, allowing that the roles of $F$ and $F'$ may be switched.

\begin{figure}[h!]
\centering
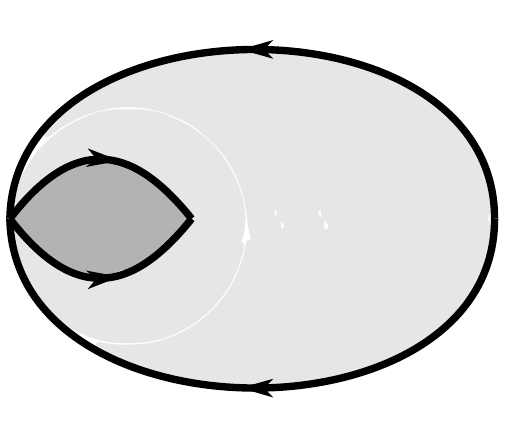
\caption{}
\label{Bigon}
\end{figure}

Notice that in Figure \ref{Bigon}, $F \cup F'$ is enclosed in a simple subdiagram with boundary label $aa^{-1}$ (or $bb^{-1}$, if the roles of $F$ and $F'$ are switched).  This finishes the second case, thus (c) holds for $M'$ and we are done.
\end{proof}

\begin{corollary}\label{WLOG3}
Let $G$ be a group given by an aspherical presentation $\langle S \mid R \rangle$, where $R$ is cyclically minimal and cyclically reduced, and $s \neq_G 1$ for all $s \in S$.  Let $M$ be a van Kampen diagram over $\langle S \mid R \rangle$ such that $\Lab(\bd M)$ is cyclically reduced.  Then there exists a van Kampen diagram $M'$ such that all of the following conditions hold. 
\end{corollary}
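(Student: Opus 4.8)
Presumably \cref{WLOG3} asserts that the van Kampen diagram produced by \cref{WLOG2} can in addition be taken to be \emph{reduced}. The plan is to bootstrap from \cref{WLOG2}: first produce a diagram $M'$ satisfying its conclusions (a)--(c), and then repeatedly destroy cancelling pairs of faces, re-applying \cref{WLOG2} after each step, until no two faces cancel. The whole procedure must preserve $\Lab(\bd M)$ and every signed $r$-face count, while each step strictly decreases the number of essential faces, so that the process terminates.

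I would begin with two easy reductions. Since $R$ is cyclically reduced, every element of $R_*$ is a nonempty reduced word, whereas the boundary label of an inessential face is freely trivial; hence an essential face can never cancel with an inessential one. Moreover, once $M'$ satisfies \cref{WLOG2}(c) its inessential faces sit inside ``bigon'' subdiagrams with boundary label $ss^{-1}$, and the analysis in the proof of \cref{WLOG2} (together with \cref{edgeDelete} and \cref{diskDelete}) lets us delete any inessential face that cancels with another inessential face without disturbing $\Lab(\bd M)$ or any signed $r$-face count. So we may assume that every cancelling pair of $M'$ consists of two \emph{essential} faces $F$ and $F'$.

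Now suppose $F$ and $F'$ cancel along an edge $e=(x,y)$. Using \cref{padVertex} I would first make $F$ and $F'$ simple, and then arrange (by further padding the vertices at which $\bd F$ and $\bd F'$ meet away from their common cancelling arc) that $F$ and $F'$ intersect simply, so that $D = F \cup F'$ is a genuine simple subdiagram whose boundary contains no inessential edge; the equality $\Lab(\bd F, x, +) = \Lab(\bd F', x, -)$ then forces $\Lab(\bd D) =_{F(S)} 1$. Regarding $D$ on its own as a van Kampen diagram over the aspherical presentation $\langle S \mid R\rangle$, \cref{ZeroSignedSum} gives $\sigma(D, r)=0$ for every $r \in R$, so the faces lying in $D$ contribute nothing to any signed $r$-face count; hence deleting $D$ via \cref{diskDelete} leaves $\Lab(\bd M)$ and every signed $r$-face count unchanged while removing at least two essential faces. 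This is exactly where asphericity is indispensable: as the example after \cref{diskDelete} shows, without it the deletion could change a signed $r$-face count.

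Finally I would re-run \cref{WLOG2} to restore conclusions (a)--(c) and iterate; since the number of essential faces strictly drops at each stage, the process halts at a diagram $M'$ satisfying \cref{WLOG2}(a)--(c) in which no two faces cancel, which is the diagram claimed by \cref{WLOG3}. The hard part is the topological bookkeeping of the third paragraph: one must verify that the padding moves can always be arranged so that $F \cup F'$ becomes a simply connected union of faces with no inessential boundary edges and freely trivial boundary label, since only then do \cref{ZeroSignedSum} and \cref{diskDelete} both apply. Everything else is a routine termination argument.
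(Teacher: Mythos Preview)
You have misidentified the conclusion of \cref{WLOG3}. Condition (c) is not that $M'$ is \emph{reduced} but that $M'$ is \emph{bare}: the point of the corollary is to eliminate all inessential faces while preserving the boundary label and every signed $r$-face count. (The passage to a reduced diagram is the content of the next result, \cref{WLOG4}, which requires the additional $C'(\sfrac{1}{6})$ hypothesis.) Consequently, almost all of your argument---the discussion of cancelling pairs of essential faces, the padding to make $F\cup F'$ simple, the termination via decreasing essential face count---is aimed at the wrong target.

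The paper's proof is much shorter and uses asphericity in a different place. Starting from a diagram satisfying \cref{WLOG2}(a)--(c), every inessential face already lies inside a simple subdiagram $D$ with $\Lab(\bd D)=ss^{-1}$ for some $s\in S$. One simply applies \cref{diskDelete} to each such $D$ in turn; this always preserves the boundary label, and since $\Lab(\bd D)=_{F(S)}1$ and the presentation is aspherical, \cref{ZeroSignedSum} guarantees that $\sigma(D,r)=0$ for every $r\in R$, so each deletion also preserves the signed $r$-face counts. After finitely many steps no inessential faces remain and $M'$ is bare. No cancellation analysis, no padding, and no iteration through \cref{WLOG2} is needed.
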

\begin{enumerate}[label=\normalfont(\alph*)]
\item $\Lab(\bd M') = \Lab(\bd M)$.
\item $\sigma(M', r) = \sigma(M, r)$ for all $r \in R$.
\item $M'$ is bare.
\end{enumerate}
\begin{proof}
We may assume that $M'$ satisfies (a)-(c) of \cref{WLOG2}.  Now all inessential faces of $M'$ are contained in simple subdiagrams of $M'$ with boundary label $ss^{-1}$ for some $s \in S$.  Thus we may make $M'$ bare by repeatedly applying \cref{diskDelete}.  \cref{diskDelete} always preserves the boundary label of a van Kampen diagram, so (a) holds.  Since $\langle S \mid R \rangle$ is aspherical, it follows from \cref{ZeroSignedSum} that each application of \cref{diskDelete} preserves $\sigma(M', r)$ for all $r \in R$.  Thus (a)-(c) hold for $M'$, and we are done.
\end{proof}

Often one would like to take a van Kampen diagram $M$ which is not reduced, and reduce it using \cref{diskDelete}.  However, the canceling faces may not be simple, or may not intersect each other simply.  A common solution is to pad the van Kampen diagram with inessential faces.  However, if $M$ is a van Kampen diagram over a $C'(\sfrac{1}{6})$ presentation, then $M$ is topologically well-behaved enough to perform this operation without the use of inessential faces.  We make this statement precise in the following two lemmas.  The second is a consequence of the first, which is the famous Greendlinger Lemma. 

\begin{lemma}[Greendlinger Lemma] \cite[Chapter V, Theorem 4.5]{Lyndon_Schupp}
Let $M$ be a bare and reduced van Kampen diagram over a cyclically reduced $C'(\lambda)$ presentation, where $\lambda \leq \sfrac{1}{6}$, such that $M$ has at least one bounded face and $\Lab(\bd M)$ is cyclically reduced.  Then there exists a face $F$ of $M$ such that $\bd F$ and $\bd M$ share a common subpath of length more than $\frac{1}{2} \ell(\bd F)$.
\end{lemma}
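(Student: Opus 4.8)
The final statement is the classical Greendlinger Lemma, and since it is attributed to \cite[Chapter V, Theorem 4.5]{Lyndon_Schupp}, the honest ``proof'' is largely a matter of checking that our hypotheses put $M$ into the ``reduced $C'(\sfrac{1}{6})$ map'' setting used in that reference. I would first record the dictionary: ``bare'' means every face is essential, so each face $F$ has $\Lab(\bd F) \in R_*$ and in particular $\ell(\bd F) = |r|$ for the corresponding $r \in R$; ``reduced'' means no two faces cancel, so \cref{SmallCancellationGeometric} applies to \emph{every} pair of faces with the ``cancel'' alternative deleted; and ``$\Lab(\bd M)$ cyclically reduced'' rules out spurs on $\bd M$. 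With this in hand one may simply quote \cite{Lyndon_Schupp}. If a self-contained argument were wanted, the plan would be to run the standard proof, whose skeleton is below.

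First I would reduce to the case of a nonsingular disc. If $M$ has a single bounded face $F$ then $\bd F = \bd M$ and we are done, so assume there are at least two faces and induct on that number: at a cut vertex one splits $M = M_1 \cup M_2$, observes that at least one $M_i$ carries a face and that its boundary path is a subpath of $\bd M$ (the incident edges coincide, only the cyclic order around the cut vertex changes), and transfers the inductive conclusion for that $M_i$ back to $M$. So one may assume $\bd M$ is a simple closed curve and $M$ is $2$-connected. Working with the arcs of $\bd M$ and of each $\bd F$ obtained by cutting at the vertices of degree $\geq 3$ — a degree-$1$ interior vertex is impossible since $R$ is cyclically reduced and $M$ is bare, and degree-$2$ vertices are absorbed into the interiors of arcs — call a face \emph{interior} if $\bd F$ misses $\bd M$ and a \emph{boundary face} otherwise.

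Next I would show the discrete curvature lives on the boundary. By \cref{SmallCancellationGeometric}, each arc shared by two faces (a piece) on $\bd F$ has length $< \tfrac{1}{6}\ell(\bd F)$; hence an interior face, all of whose arcs are pieces, has at least $7$ of them. Assigning to every corner at an interior vertex the angle $2\pi/\deg(v) \leq 2\pi/3$, and angles summing to $\pi$ at each boundary vertex, the combinatorial Gauss--Bonnet identity $\sum_v \kappa(v) + \sum_F \kappa(F) = 2\pi$ has all interior-vertex terms equal to $0$, all boundary-vertex terms $\leq 0$, and each interior-face term $\leq 2\pi - 7\cdot\tfrac{\pi}{3} < 0$; so some boundary face $F_0$ has $\kappa(F_0) > 0$.

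The hard part will be the sharp constant. The purely combinatorial count above only forces $F_0$ to have at most six pieces, which merely yields $\ell(\bd F_0 \cap \bd M) > 0$; to obtain the bound $\tfrac{1}{2}\ell(\bd F_0)$ one must exploit the \emph{metric} $C'(\sfrac{1}{6})$ condition in full — tracking the actual lengths of the pieces, not merely their number — to produce a boundary region whose pieces have total length less than $3 \cdot \tfrac{1}{6}\ell(\bd F) = \tfrac{1}{2}\ell(\bd F)$, i.e.\ a boundary region with at most three pieces. This is exactly where the value $\tfrac{1}{2} = 1 - 3\cdot\tfrac{1}{6}$ enters, and it is the step I would expect to require the most care (and presumably the reason the lemma is quoted from \cite{Lyndon_Schupp} rather than reproved). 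Given such an $F_0$, its intersection with $\bd M$ is a single arc of length $\ell(\bd F_0) - (\text{total piece length}) > \tfrac{1}{2}\ell(\bd F_0)$, so $F = F_0$ satisfies the conclusion.
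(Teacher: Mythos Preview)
Your reading is correct: the paper does not prove this lemma at all --- it simply cites \cite[Chapter V, Theorem 4.5]{Lyndon_Schupp} and moves on. Your opening paragraph, noting that the ``honest proof'' consists of matching the paper's hypotheses (bare, reduced, cyclically reduced boundary label) to the standing assumptions in Lyndon--Schupp, is exactly right and is all that is needed here.

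The remainder of your proposal --- the reduction to a nonsingular disc, the curvature argument, and the discussion of where the constant $\tfrac{1}{2}$ comes from --- goes well beyond what the paper does, since the paper offers nothing at all. Your sketch is a reasonable outline of the standard proof, and you correctly flag the delicate step: the combinatorial Gauss--Bonnet count alone only bounds the \emph{number} of interior arcs, and one needs the metric $C'(\sfrac{1}{6})$ condition to get down to at most three pieces and hence the constant $\tfrac{1}{2}$. One small caution: the claim that the intersection of the boundary face $F_0$ with $\bd M$ is a \emph{single} arc is not automatic from the curvature argument and typically requires a separate observation (or is part of the definition of the boundary regions one works with in Lyndon--Schupp). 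But since you are ultimately deferring to the reference anyway, this is not a gap in what you have written.
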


\begin{lemma}
\label{facesAreDisks}
Let $M$ be a bare van Kampen diagram over a cyclically reduced $C'(\sfrac{1}{6})$ presentation.  Then
\begin{enumerate}[label=\normalfont(\alph*)]
\item If $M$ is reduced, then every face of $M$ is simple.
\item If $M$ is reduced, then every two faces of $M$ that intersect nontrivially also intersect simply.
\item If $M$ is not reduced, then there exists a pair of faces that cancel and intersect simply.
\end{enumerate}
\end{lemma}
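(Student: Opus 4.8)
The proof of all three parts runs on one engine, obtained by playing the Greendlinger Lemma against \cref{SmallCancellationGeometric}. If $D$ is any bare, reduced van Kampen diagram over the given $C'(\sfrac{1}{6})$ presentation that has at least one bounded face and whose boundary label is cyclically reduced, then Greendlinger produces a face $F''$ of $D$ sharing a common subpath of length more than $\tfrac12 \ell(\bd F'')$ with $\bd D$. On the other hand, if $\bd D$ happens to lie inside $\bd F$ for some essential face $F$ of $M$ with $F \neq F''$, then $F$ and $F''$ do not cancel (since $M$, hence $D$, is reduced), so \cref{SmallCancellationGeometric} forces any common subpath of $\bd F$ and $\bd F''$ to have length less than $\tfrac16 \ell(\bd F'') < \tfrac12 \ell(\bd F'')$ --- a contradiction. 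So in each part the task is to extract, from a hypothetical bad configuration, exactly such a subdiagram.

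\emph{Part (a).} Suppose a face $F$ of $M$ is not simple. Since $M$ is bare, $F$ is essential and $\Lab(\bd F)$ is cyclically reduced. Walking $\bd F$ and taking a shortest closed sub-walk at a repeated vertex yields a simple closed curve $\gamma \subseteq \bd F$ which is a proper sub-walk of $\bd F$. One of the two discs cut off by $\gamma$ does not contain $\operatorname{int}(F)$; if that disc is bounded it is a simple subdiagram $D$ with $\bd D = \gamma$, which is bare and reduced and, since $\gamma$ is a proper sub-walk of $\bd F$, has at least one face --- so Greendlinger on $D$ together with \cref{SmallCancellationGeometric} (applied with $F$) gives a contradiction. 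If instead that disc is unbounded, the bounded disc $D$ containing $F$ still satisfies $\bd D = \gamma \subseteq \bd F$; Greendlinger on $D$ returns a face which, if it is not $F$ itself, again contradicts \cref{SmallCancellationGeometric}, and if it is $F$ one recurses on the strictly shorter complementary closed sub-walk of $\bd F$.

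\emph{Part (b).} By (a) every face of $M$ is simple, so $\bd F$ and $\bd F'$ are simple closed curves and $\bd F \cap \bd F'$ is a disjoint union of arcs; ``not intersecting simply'' means there are at least two such arcs $\alpha_1, \alpha_2$. The arc of $\bd F$ from $\alpha_1$ to $\alpha_2$ and the arc of $\bd F'$ from $\alpha_2$ back to $\alpha_1$ together bound (after passing to an innermost simple closed piece) a simple subdiagram $D$ whose boundary is a concatenation of a subpath of $\bd F$ with a subpath of $\bd F'$. If $D$ had no face the two subpaths would coincide, contradicting the choice of $\alpha_1, \alpha_2$ as distinct components; so $D$ has a face, and Greendlinger plus \cref{SmallCancellationGeometric} finish as before, an overlap straddling a corner or a Greendlinger face equal to $F$ or $F'$ being handled instead by the maximality of $\alpha_1, \alpha_2$ (pass to a smaller $D$). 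For part (c), $M$ is bare but not reduced, so (a) and (b) are unavailable; here one argues by minimality. Among all pairs of faces that cancel, choose $F, F'$ cancelling along an edge $e$ so that the region $D_0$ bounded by the two boundary walks between the end of their maximal common prefix and the start of their maximal common suffix encloses as few faces as possible; its boundary is $\mu \bar{\mu'}$ with $\mu \subseteq \bd F$, $\mu' \subseteq \bd F'$ of equal label but distinct first and last edges, so $\mu \neq \mu'$ and $D_0$ has at least one face. If $D_0$ is not reduced it contains a cancelling pair whose enclosed region lies strictly inside $D_0$, so that region is either empty --- and then that pair cancels and intersects simply, and we are done --- or it contradicts minimality; hence $D_0$ is reduced. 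Then Greendlinger gives a face $G$ of $D_0$ sharing more than $\tfrac12 \ell(\bd G)$ with $\mu$ or $\mu'$, so by \cref{SmallCancellationGeometric} $G$ cancels with $F$ or $F'$; the region this new cancelling pair encloses lies strictly inside $D_0$, so by minimality it is empty, which means $G$ and $F$ (or $F'$) cancel \emph{and} intersect simply, as required.

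The main obstacle is the topological bookkeeping compressed into phrases like ``one of the two discs'', ``innermost simple closed piece'', and ``if the Greendlinger face equals $F$ or $F'$'', together with the degenerate cases in which $\gamma$ or $\mu$ runs along $\bd M$. Logically prior to all of it is the following subtlety: the boundary label of an extracted subdiagram, being a subword of a cyclically reduced face label, need not itself be \emph{cyclically} reduced, so the hypothesis of the Greendlinger Lemma is not literally met; the only possible obstruction is a single ``pinch'' vertex, which must be cleared by a further innermost-disc replacement before each appeal to Greendlinger. Carrying out these surgeries while keeping the bare and reduced properties of the subdiagrams intact is the technical heart of the argument.
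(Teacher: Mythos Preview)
Your proposal is essentially correct and follows the same engine as the paper: extract a simple subdiagram from a bad configuration, apply Greendlinger to it, and use the $C'(\sfrac{1}{6})$ bound (\cref{SmallCancellationGeometric}) to force a cancellation that contradicts reducedness. The paper's own proof is at a comparable level of topological detail to yours---it too glosses over the ``innermost disc'' and corner-straddling bookkeeping, and it does not address the cyclic-reduction hypothesis of Greendlinger that you correctly flag.

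Two small structural differences are worth noting. For part~(a) the paper simply cites \cite[Chapter~V, Lemma~4.1]{Lyndon_Schupp}, whereas you sketch a direct argument; your recursion in the ``unbounded disc'' branch is a bit loose but can be made to terminate. For part~(c), the paper minimizes over all counterexample \emph{diagrams} (taking $M$ with fewest faces), while you minimize over all cancelling \emph{pairs} within the given $M$ (taking the pair whose enclosed region $D_0$ is smallest). Both schemes work, but the paper's is slightly cleaner: it avoids having to define ``the'' region enclosed by a cancelling pair (your $D_0$), and its contradiction step---observing that $D\cup F$ is a smaller counterexample since it omits $F'$---is more transparent than your claim that the new cancelling pair's region lies strictly inside $D_0$. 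In the paper's part~(b), the Greendlinger arc is split at a corner and the $\tfrac{1}{4}$ bound (rather than $\tfrac{1}{2}$) is what feeds into \cref{SmallCancellationGeometric}; your ``pass to a smaller $D$'' alternative also works but is less direct.
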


\begin{proof}
We refer the reader to \cite[Chapter V, Lemma 4.1]{Lyndon_Schupp} for the proof of part (a).  For part (b), suppose that $M$ is a counterexample with the minimum number of faces, and that $F$ and $F'$ are two faces of $M$ that do not intersect simply, i.e. such that $\bd F$ intersects $\bd F'$ in more than one maximal common subpath. Then $\bd F$ and $\bd F'$ together enclose at least one simple subdiagram of $M$, call it $D$.  Since $M$ is reduced, so is $D$.  By the Greendlinger Lemma, there exists a face $E$ of $D$ such that $\bd E$ intersects $\bd D$ in a subpath of length at least $\frac{1}{2}\ell(\bd E)$.  Therefore $\bd E$ intersects one of $F$ or $F'$, say $F$, in a common subpath of length at least $\frac{1}{4}\ell(\bd E)$.  But then $E$ and $F$ cancel, contradicting the assumption that $M$ is reduced.

For part (c), suppose that $M$ is a counterexample with the minimum number of faces.  Then $M$ is not reduced, and there are two faces $F$ and $F'$ that cancel but do not intersect simply.  Therefore $\bd F$ and $\bd F'$ together enclose a simple subdiagram $D$.  Again $D$ must be reduced, this time by minimality of $M$.  By an argument similar to the one in the preceding paragraph, there is a face $E$ of $D$ that cancels with $F$.  By assumption, $E$ and $F$ cannot intersect simply.  But then $D \cup F$ is a subdiagram of $M$ that is a counterexample with strictly fewer faces than $M$, since it does not include $F'$.  This contradicts minimality of $M$, finishing the proof.
\end{proof}

One can then use \cref{facesAreDisks} to prove the following corollary.

\begin{corollary}\label{corFacesAreDisks}
Let $G$ be a group given by presentation $\langle S \mid R \rangle$, where $R$ is cyclically reduced and satisfies $C'(\sfrac{1}{6})$.  Then for every $r \in R$, if $u$ is a subword of an element of $\{r\}_*$, then $u \neq_G 1$.  In particular,
\begin{enumerate}[label=\normalfont(\alph*)]
\item For all generators $s \in S$, if $s \not \in R$ then $s \neq_G 1$.
\item If $M$ is a bare van Kampen diagram over $\langle S \mid R \rangle$, then every face of $M$ is simple.
\item If $M$ is a bare van Kampen diagram over $\langle S \mid R \rangle$ which is not reduced, then there exists a pair of cancelling faces that are simple and intersect simply.
\end{enumerate} 
\end{corollary}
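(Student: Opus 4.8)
The plan is to prove the displayed ``main claim'' --- that no nonempty \emph{proper} subword of a cyclic shift of a relator is trivial in $G$ --- and then read off (a), (b), (c) from it. Suppose the main claim fails, so there are $r\in R$, an element $r'\in\{r\}_*$, and a nonempty word $u$ that is a proper subword of $r'$ with $\bar u=1$; I aim for a contradiction. First I reduce to $|u|\le\tfrac12|r|$. Write $r'=sut$ with $|s|+|t|=|r|-|u|\ge1$; since $r'\in R_*$ we have $\overline{r'}=1$, and then $\bar u=1$ forces $\bar s\bar t=1$, hence $\overline{ts}=\bar t\bar s=\bar s^{-1}\bar s=1$. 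But $ts$ is a nonempty prefix --- so a proper subword --- of the cyclic shift $tsu\in\{r\}_*$, with $|ts|=|r|-|u|$, so if $|u|>\tfrac12|r|$ we may replace $u$ by $ts$; after this $|u|\le\tfrac12|r|$. Replacing $u$ further by its cyclic reduction (a ``central'' subword of $u$, hence still a nonempty proper subword of the relevant cyclic shift, still trivial, still of length $\le\tfrac12|r|$), we may assume $u$ is cyclically reduced, and it is nonempty because a reduced word is freely trivial only if empty. By the van Kampen Lemma together with the standard fact that a cyclically reduced, non-freely-trivial word representing $1$ bounds a bare, reduced van Kampen diagram with at least one face, pick such a diagram $M$ over $\langle S\mid R\rangle$ with $\Lab(\bd M)=u$.

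By \cref{facesAreDisks}(a) every face of $M$ is simple, so the Greendlinger Lemma gives a face $F$ --- say $\Lab(\bd F)\in\{r''\}_*$ for some $r''\in R$, so $\ell(\bd F)=|r''|$ --- and a common subpath $\gamma$ of $\bd F$ and $\bd M$ with $\ell(\gamma)>\tfrac12|r''|$. Since $\gamma$ lies on $\bd M$, $\ell(\gamma)\le\ell(\bd M)=|u|\le\tfrac12|r|$, so $\tfrac12|r''|<\tfrac12|r|$, i.e. $|r''|<|r|$. Choosing an endpoint of $\gamma$ as basepoint of $\bd F$, $\Lab(\gamma)$ is a prefix of an element of $\{r''\}_*$; also $\Lab(\gamma)$ is a cyclic subword of $u$ of length $<|r'|$, and $u$ is a subword of $r'$, so $\Lab(\gamma)$ is a subword of an element of $\{r\}_*$. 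Thus $\Lab(\gamma)$ is a piece of $r$ and $r''$; as $\ell(\gamma)>\tfrac12|r''|\ge\tfrac16\min(|r|,|r''|)$, the $C'(\sfrac1{12})$-grade condition $C'(\sfrac16)$ forces the two witnessing cyclic shifts to coincide. That common word lies in $\{r\}_*\cap\{r''\}_*$, so $\{r\}_*=\{r''\}_*$ and $|r''|=|r|$, contradicting $|r''|<|r|$. This proves the main claim.

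Now I deduce the three parts. For (b): if a bare van Kampen diagram $M$ had a face $F$ whose boundary path $(\bd F,x,+)$ is not simple, pick a vertex $x$ visited twice and split $(\bd F,x,+)=\mu_1\mu_2$ with $\mu_1,\mu_2$ closed loops at $x$ of positive length; then $\Lab(\mu_1)$ is a nonempty proper subword of $\Lab(\bd F)\in R_*$, and applying the label-preserving map $f\colon M\to\Gamma(G,S)$ of \cref{vKL} shows $f(\mu_1)$ is a closed loop, so $\overline{\Lab(\mu_1)}=1$, contradicting the main claim. Part (c) is then immediate from \cref{facesAreDisks}(c) and (b): the cancelling, simply-intersecting pair provided by \cref{facesAreDisks}(c) consists of simple faces by (b). For (a): a generator $s\notin R$ either occurs in no relator --- in which case $\langle s\rangle$ is a free $\mathbb Z$ factor of $G$, so $s\ne_G1$ --- or occurs in some relator $r$ of length $\ge2$ (length-$1$ relators being excluded), so $s$ (or $s^{-1}$) is a proper subword of $r$ (resp.\ $r^{-1}$) and $s\ne_G1$ by the main claim. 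The main obstacle is the main claim, and within it the reduction $|u|\le\tfrac12|r|$: the ``complement'' trick replacing $u$ by $ts$ is exactly what makes the Greendlinger estimate $\ell(\gamma)>\tfrac12\ell(\bd F)$ collide with $\ell(\gamma)\le\ell(\bd M)=|u|$. The care required is in verifying that each reduction step preserves ``nonempty proper subword of some $r'\in\{r\}_*$'', and in the bookkeeping that $\Lab(\gamma)$ is a common subword of cyclic shifts of both $r$ and $r''$ so that $C'(\sfrac16)$ applies and yields $|r''|=|r|$.
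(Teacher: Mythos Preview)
Your proof is correct and follows essentially the same approach as the paper: reduce to $|u|\le\tfrac12|r|$ (you use an explicit complement swap, the paper uses a minimality argument), take a reduced van Kampen diagram with boundary label $u$, apply the Greendlinger Lemma, and use $C'(\sfrac16)$ to force $\Lab(\bd F)\in\{r\}_*$ and reach a length contradiction. Your deductions of (a)--(c) also match the paper's, and you are in fact slightly more careful than the paper in explicitly cyclically reducing $u$ before invoking Greendlinger (the paper asserts $u$ is cyclically reduced without justification).
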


\begin{proof}
Suppose otherwise, and choose $u \in S_\circ^*$ to be a word of minimum length which is a proper subword of $\{r\}_*$ for some $r \in R$.  Without loss of generality, suppose that $u$ is a prefix of $r$, so that $r = uv$ for some $v \in S_\circ^*$.  Clearly $v =_G 1$, so by minimality of $u$ we have that $|v| \geq |u|$, hence $|u| \leq \frac{1}{2}|r|$.

Let $M$ be a reduced van Kampen diagram over $\langle S \mid R \rangle$ such that $\Lab(\bd M) = u$.  Since $R$ is cyclically reduced, so is $u$: in particular, $M$ has at least one bounded face.  By the Greendlinger Lemma, there exists a face $F$ of $M$ such that $F$ shares a common subpath of length more than $\frac{1}{2}\ell(\bd F)$ with $\bd M$.  Let the label of this common subpath be $w$.  Then $w$ is a piece of $r$ and $\Lab(\bd F)$, of length more than $\frac{1}{2}\ell(\bd F)$.  By the $C'(\sfrac{1}{6})$ condition, $\Lab(\bd F) = r$.  But then $|w| > \frac{1}{2}|r| \geq |u|$.  Since $w$ is the label of a subpath of $\bd M$, this is a contradiction.

Conclusions (a) and (b) follow directly.  Part (c) follows from part (b) of the current lemma and \cref{facesAreDisks} (c). 
\end{proof}

\begin{lemma}\label{WLOG4}
Let $M$ be a van Kampen diagram over a $C'(\sfrac{1}{6})$ presentation $\langle S \mid R \rangle$, where $R$ is cyclically minimal and cyclically reduced, and $|r| \geq 2$ for all $r \in R$.  Then there exists a van Kampen diagram $M'$ over $\langle S \mid R \rangle$ such that
\begin{enumerate}[label=\normalfont(\alph*)]
\item $\Lab(\bd M') = \Lab(\bd M)$.
\item $\sigma(M', r) = \sigma(M, r)$ for all $r \in R$.
\item $M'$ is bare and reduced.
\end{enumerate}
\end{lemma}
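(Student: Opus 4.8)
The plan is to alternate two moves: (i) making the diagram \emph{bare}, which is exactly the content of \cref{WLOG3}, and (ii) when the result is still not reduced, cancelling a pair of simple faces that intersect simply via \cref{diskDelete}. Each cancellation strictly lowers the number of essential faces, so the process must terminate, and it can only terminate at a bare, reduced diagram.

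First I would check that \cref{WLOG3} is applicable. A $C'(\sfrac{1}{6})$ presentation is aspherical (recorded just before \cref{ZeroSignedSum}), $R$ is cyclically minimal and cyclically reduced by hypothesis, and since $|r|\geq 2$ for every $r\in R$ no single generator lies in $R$, so \cref{corFacesAreDisks}(a) gives $s\neq_G 1$ for all $s\in S$. (If $\Lab(\bd M)$ is not cyclically reduced, one first disposes of the folds of $\bd M$ by a direct argument; so assume from now on that $\Lab(\bd M)$ is cyclically reduced.) Then \cref{WLOG3} produces a bare diagram $M_0$ with $\Lab(\bd M_0)=\Lab(\bd M)$ and $\sigma(M_0,r)=\sigma(M,r)$ for all $r\in R$.

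Now suppose a bare diagram $N$ with $\Lab(\bd N)=\Lab(\bd M)$ and $\sigma(N,r)=\sigma(M,r)$ for all $r$ has been produced. If $N$ is reduced we are finished. Otherwise \cref{corFacesAreDisks}(c) gives a cancelling pair $F,F'$ of faces of $N$ that are simple and intersect simply; as noted after \cref{diskDelete}, $F\cup F'$ is then a simple subdiagram whose boundary label is trivial in $F(S)$, so \cref{diskDelete} removes it and produces $N'$ with $\Lab(\bd N')=\Lab(\bd N)$. Because $F$ and $F'$ cancel, $\sigma(F,r)=-\sigma(F',r)$ for every $r$, and the inessential face introduced by \cref{diskDelete} contributes $0$ — equivalently, by asphericity and \cref{ZeroSignedSum} the removed subdiagram satisfies $\sigma(F\cup F',r)=0$ — so $\sigma(N',r)=\sigma(N,r)$ for all $r$, while $N'$ has two fewer essential faces than $N$. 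Applying \cref{WLOG3} to $N'$ returns a bare diagram with the same boundary label and signed $r$-face counts and with no more essential faces than $N'$, hence strictly fewer than $N$. Since the essential-face count is a non-negative integer that strictly decreases at each round, the loop halts at a bare, reduced $M'$ with $\Lab(\bd M')=\Lab(\bd M)$ and $\sigma(M',r)=\sigma(M,r)$ for all $r$, which is exactly (a)--(c).

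The delicate point is the interplay of the two moves: a cancellation step creates a new inessential face, so one must re-run \cref{WLOG3} afterwards, and for the termination argument one needs that re-baring (a) preserves $\Lab(\bd M)$ and every $\sigma(\cdot,r)$ — guaranteed by \cref{WLOG3}, which rests ultimately on asphericity via \cref{ZeroSignedSum} — and (b) never \emph{increases} the essential-face count. Tracking this monotonicity through the padding and unpadding inside \cref{WLOG3} (which temporarily inserts inessential faces and may delete cancelling essential pairs, but never adds an essential face) is the one place that needs genuine care; handling a boundary label that is not cyclically reduced is a minor extra wrinkle.
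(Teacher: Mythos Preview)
Your approach is correct and follows the same broad outline as the paper's: first invoke \cref{WLOG3} to obtain a bare diagram, then repeatedly apply \cref{diskDelete} to cancelling pairs supplied by \cref{corFacesAreDisks}(c). The difference is that you insert an extra step, re-running \cref{WLOG3} after every cancellation because you believe \cref{diskDelete} leaves behind an inessential face. In fact, as defined in the paper, \cref{diskDelete} finishes by deforming $\alpha_+$ onto $\alpha_-$ through the interior of the temporary inessential face, collapsing it entirely; the output is therefore still bare. Consequently the paper applies \cref{WLOG3} only once and then iterates \cref{diskDelete} alone, which makes termination immediate (each step removes two essential faces) and obviates your careful tracking of the essential-face count through the padding and unpadding inside \cref{WLOG3}. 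Your version works, but the paper's is shorter precisely because it notices that bareness is preserved by the cancellation step. Your remark about the boundary label possibly failing to be cyclically reduced is a fair observation; the paper's proof tacitly assumes this hypothesis from \cref{WLOG3} carries over.
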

\begin{proof}
We may assume that $M'$ satisfies (a)-(c) of \cref{WLOG3}.  Thus we only have to show that it is possible to transform $M'$ so that it is reduced, while preserving the boundary label and signed $r$-face count for each $r \in R$, and without adding any inessential faces.

Suppose that $M'$ is not reduced.  Since $M'$ is bare, by \cref{corFacesAreDisks} there exist two simple faces $F$ and $F'$ that cancel and intersect simply.  Thus $F \cup F'$ is a simple subdiagram of $M$ with trivial boundary label. Now remove $F \cup F'$ with \cref{diskDelete}.  Since $\langle S \mid R \rangle$ is $C'(\sfrac{1}{6})$, and therefore aspherical, this operation preserves $\sigma(M', r)$ for all $r \in R$.  Repeating, we end up with a reduced van Kampen diagram.
\end{proof}

\begin{lemma}\label{M_HConstruction}
Let $G,H$ be groups given by presentations
\begin{align*}
G &= \langle S \mid R_G \rangle \\
H &= \langle S \mid R_H \rangle
\end{align*}
where $\langle S \mid R_H \rangle$ is a cyclically reduced $C'(\sfrac{1}{6})$ presentation, and $|r_H| \geq 2$ for all $r_H \in R_H$.  Suppose that $r_G =_H 1$ for all $r_G \in R_G$, so $H$ is a quotient of $G$.  Let $M_G$ be a van Kampen diagram over $\langle S \mid R_G \rangle$, and for each face $F$ of $M_G$, let $M_F$ be a van Kampen diagram over $\langle S \mid R \rangle$ with boundary label $\Lab(\bd F)$.  Then there exists a ``quotient van Kampen diagram" $M_H$ over $\langle S \mid R_H \rangle$ such that
\begin{enumerate}[label=\normalfont(\alph*)]
\item $\Lab(\bd M_G) = \Lab(\bd M_H)$.
\item $\sigma(M_H, r) = \sum\{\sigma(M_F, r) \mid F \text{ is an essential face of } M_G \}$.
\item $M_H$ is bare and reduced.
\end{enumerate}
\end{lemma}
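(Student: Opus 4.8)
The plan is to make every essential face of $M_G$ simple by padding vertices, then to replace each essential face by its prescribed diagram $M_F$ using the operation of quotienting simple faces, and finally to clean up the result with \cref{WLOG4}. For the first step, starting from $M_G$ I repeatedly pad (\cref{padVertex}) any vertex occurring twice in the boundary path of some face. Each such padding leaves the boundary path and the set of essential faces unchanged, adds only inessential faces with boundary labels of the form $1ss^{-1}$ (so the result remains a van Kampen diagram over $\langle S \mid R_G \rangle$), and strictly decreases the finite number of vertices that are self-intersection points of the boundary path of some face. The process therefore terminates in a diagram $M_G'$ over $\langle S \mid R_G \rangle$ with $\Lab(\bd M_G') = \Lab(\bd M_G)$, with the same essential faces as $M_G$, and in which every face is simple.

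Since $r_G =_H 1$ for every $r_G \in R_G$, the group $H$ is a quotient of $G$, so the operation of quotienting simple faces is available. Apply it to the essential faces of $M_G'$ one at a time, replacing each essential face $F$ by a copy of the chosen $M_F$ over $\langle S \mid R_H \rangle$; this is legitimate because $F$ is simple and $\Lab(\bd M_F) = \Lab(\bd F)$, and it never alters the boundary label. Once all essential faces of $M_G'$ have been treated, every remaining face has boundary label either in $(R_H)_*$ (the faces lying inside the copies of the various $M_F$) or equal to $1$ in $F(S)$ (the inessential faces inherited from $M_G'$), so we obtain a van Kampen diagram $M_H'$ over $\langle S \mid R_H \rangle$ with $\Lab(\bd M_H') = \Lab(\bd M_G)$ whose faces are exactly the inessential faces of $M_G'$ together with the faces of the copies of $M_F$, taken over essential faces $F$ of $M_G'$.

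Now fix $r \in R_H$. Since $R_H$ is cyclically reduced, an inessential face (whose boundary label equals $1$ in $F(S)$) cannot have boundary label in $(R_H)_*$, hence contributes $0$ to $\sigma(M_H', r)$; and the faces inside a copy of $M_F$ contribute $\sigma(M_F, r)$ in total. Therefore
\begin{align*}
\sigma(M_H', r) &= \sum \{ \sigma(M_F, r) \mid F \text{ an essential face of } M_G' \} \\
&= \sum \{ \sigma(M_F, r) \mid F \text{ an essential face of } M_G \},
\end{align*}
the last equality because $M_G$ and $M_G'$ have the same essential faces. Finally, $\langle S \mid R_H \rangle$ is $C'(\sfrac{1}{6})$, $R_H$ is cyclically reduced with $|r_H| \geq 2$ for all $r_H \in R_H$, and $R_H$ is cyclically minimal (as it must be for $\sigma$ to be defined in the first place); so \cref{WLOG4} applied to $M_H'$ yields a bare, reduced van Kampen diagram $M_H$ over $\langle S \mid R_H \rangle$ with $\Lab(\bd M_H) = \Lab(\bd M_H')$ and $\sigma(M_H, r) = \sigma(M_H', r)$ for every $r \in R_H$. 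This $M_H$ satisfies (a)--(c).

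I expect the first step to be the main obstacle. No small-cancellation hypothesis is imposed on $R_G$, so there is no structure theory for van Kampen diagrams over $\langle S \mid R_G \rangle$ to invoke; making the essential faces of $M_G$ simple has to be carried out purely topologically, and one must check with care that padding vertices disturbs neither the boundary label nor the essential faces and that the procedure terminates. Once the essential faces are simple the quotienting operation applies and everything else is bookkeeping, with the asphericity of $\langle S \mid R_H \rangle$ entering only through the cited \cref{WLOG4}, which is what actually transports the signed face counts down to a bare, reduced diagram.
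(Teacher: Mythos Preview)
Your approach is essentially the paper's: pad to make essential faces simple, quotient them to the prescribed $M_F$, then invoke \cref{WLOG4}. There is one small gap. You pad once up front and then quotient all essential faces in sequence, asserting at each step that ``this is legitimate because $F$ is simple.'' But quotienting a simple face $F$ to $M_F$ identifies $\bd M_F$ with $\bd F$, and $\bd M_F$ need not be a simple closed curve; when it is not, distinct vertices of $\bd F$ get identified, which can introduce self-intersections in the boundary paths of faces adjacent to $F$. So after one quotienting the remaining essential faces of $M_G'$ may no longer be simple, and the next application of the quotienting operation is not justified as stated. The paper notes exactly this and fixes it by interleaving: quotient one essential face, then re-pad vertices until all remaining essential faces are simple again, and repeat. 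With that adjustment your argument goes through verbatim, and your bookkeeping for $\sigma$ and your appeal to \cref{WLOG4} are fine.
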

\begin{proof}
Start with $M_G$. By repeatedly padding vertices, we may assume that all essential faces of $M_G$ are simple. 

Now take an essential disk face $F$ of $M_G$, and quotient it to $M_F$.  This may introduce self-intersections among essential faces in $M_G$.  Pad vertices again until all essential faces of $M_G$ are simple, and repeat as many times as necessary to quotient all essential faces that were originally in $M_G$. Since padding vertices and quotienting simple faces preserve the boundary label, we obtain a van Kampen diagram $M_H$ over $\langle S \mid R_H \rangle$, possibly with many inessential faces, such that $\Lab(\bd M_H) = \Lab(\bd M_G)$.  Thus (a) holds.  In addition, for all $r \in R_H$,
$$\sigma(M_H,r) = \sum\{\sigma(M_F, r) \mid F \text{ is a face of } M_G\} \, ,$$ 
so (b) holds as well.  

Note that we do \emph{not} require $R_G$ to by cyclically reduced for any of the previous steps to work.  However, $R_H$ \textit{is} cyclically reduced, thus by \cref{WLOG4} we may ensure that (c) holds, without interfering with conditions (a) or (b).
\end{proof}

\subsection{A technical lemma}\label{technicalLemma}

This section is devoted to proving the following lemma.  In essence it is similar to \cite[Lemma 5.10]{Olshanskii_Osin_Sapir}, but for our purposes we need the more general version stated here.  In order to avoid constantly reiterating the assumptions, the notation used in this lemma will be `globally fixed' for this section.  Thus until the next section, $G$ will always refer to the group with presentation given in \cref{quasiGeod}, etc.  Any new notation introduced in the body of this section will also remain fixed until the beginning of the next section. 

\begin{lemma}\label{quasiGeod}
Let $\lambda$ be a real number, where $0 < \lambda < \sfrac{1}{12}$.  Let $\{\ell_i \mid i \in \mathbb N\}$ be a set of positive integers, where each $\ell_i \geq 2$.  Let $S$ be a finite set.  Let
\begin{align*}
U = \{u_i \mid i \in \mathbb N\} \subset S_\circ^*&&
V = \{v_i \mid i \in \mathbb N\} \subset S_\circ^*
\end{align*}
be languages, and let $\tilde u \in S_\circ^*$ be a word, such that the following conditions are satisfied for all $i,i' \in \mathbb N$.
\begin{enumerate}[label=\normalfont(\alph*)]
\item $U \cup V$ is cyclically minimal and cyclically reduced, and satisfies $C'(\lambda)$.
\item $2 \leq |u_i| \leq |v_i|$.
\item If $p$ is a piece of $\tilde u$ and $u_i$, then $|p| < \lambda|u_i|$, and the same statement holds if $u_i$ is replaced with $v_i$.
\item If $u_i = u_{i'}$, $v_i= v_{i'}$, or $u_i = v_{i'}$, then $i=i'$.
\end{enumerate}
Now let
\begin{align*}
G &= \langle S \mid R_G \rangle := \langle S \mid [s, u_i], u_i^{\ell_i}, u_iv_i^{-1}: s \in S, i \in \mathbb N \rangle\\
H &= \langle S \mid R_H \rangle := \langle S \mid U \cup V \rangle = \langle S \mid u_i, v_i : i \in \mathbb N \rangle \, .
\end{align*}
Let $(k_i)$ be a sequence of integers where $|k_i| \leq \ell_i/2$ for all $i \in \mathbb N$,  and $k_i = 0$ for all but finitely many $i \in \mathbb N$.  Let $u \in S_\circ^*$ be a word of the form
$$u = \tilde u \prod_{i=0}^\infty {u_i^{k_i}}_{\, .}$$

Then $u$ is $\left({\frac{3}{1-12\lambda}}_{\, ,} 0 \right)$-quasigeodesic in $G$.
\end{lemma}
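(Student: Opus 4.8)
The plan is to bound the length of $u$ by estimating the norm $\|\bar u\|_G$ from below using van Kampen diagrams. Let $w$ be a geodesic word in $G$ with $\bar w = \bar u$, so $|w| = \|\bar u\|_G$. Then $uw^{-1} =_G 1$, and by the van Kampen Lemma (\cref{vKL}) there is a reduced van Kampen diagram $M_G$ over $\langle S \mid R_G \rangle$ with boundary label $uw^{-1}$. The key observation is that $H = \langle S \mid U \cup V \rangle$ is a $C'(\lambda)$ quotient of $G$ (each relator $[s,u_i]$, $u_i^{\ell_i}$, $u_iv_i^{-1}$ of $R_G$ is trivial in $H$ since $u_i =_H 1$), so by \cref{M_HConstruction} there is a bare, reduced quotient van Kampen diagram $M_H$ over $\langle S \mid U \cup V \rangle$ with $\Lab(\bd M_H) = uw^{-1}$, and whose signed $r$-face counts $\sigma(M_H, r)$ are inherited additively from the faces of $M_G$. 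Since $U \cup V$ satisfies $C'(\lambda)$ with $\lambda < 1/12 < 1/6$, $M_H$ is a well-behaved $C'(1/6)$ diagram to which Greendlinger-type estimates (and presumably \cref{perimeterSum}, referenced but not yet stated in the excerpt) apply.

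Next I would track the $u_i$-faces and $v_i$-faces of $M_H$. In $M_G$, the syllable $u_i^{k_i}$ of $u$ "uses up" relators $u_i^{\ell_i}$ to reduce $k_i$ modulo $\ell_i$, and relators $u_iv_i^{-1}$ and $[s,u_i]$; when we quotient to $M_H$, each relator $u_i^{\ell_i}$ contributes $\ell_i$ copies of the relator $u_i$ (with sign), each $u_iv_i^{-1}$ relator contributes one $u_i$-face and one $v_i$-face, and each $[s,u_i]$ relator contributes two $u_i$-faces of opposite sign. The point is that $\sigma(M_H, u_i)$ and $\sigma(M_H, v_i)$ are determined — up to the commutator and $u_iv_i^{-1}$ contributions, which cancel in signed count or pair $u_i$ with $v_i$ — by $k_i$; more precisely I expect $\sigma(M_H, u_i) + \sigma(M_H,v_i) \equiv k_i \pmod{\ell_i}$, and since $|k_i| \le \ell_i/2$ this forces at least $|k_i|$ faces whose boundary labels are cyclic shifts of $u_i^{\pm 1}$ or $v_i^{\pm 1}$. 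Using condition (c), the word $\tilde u$ can share only short pieces with any $u_i$ or $v_i$, and using (d) the relators for distinct indices are genuinely distinct, so these faces are "detected" along the $\prod u_i^{k_i}$ portion of $\bd M_H$ and cannot be hidden. Combining a perimeter/area estimate for $C'(1/6)$ diagrams (every essential face shares with $\bd M_H$, or with neighbouring faces, only pieces of relative length $< \lambda$, hence an edge-counting inequality relating $\ell(\bd M_H) = |u| + |w|$ to $\sum_i |k_i|\,|u_i|$) with $|u| = |\tilde u| + \sum_i |k_i|\,|u_i|$ should yield $|u| \le \frac{3}{1-12\lambda}\,|w| = \frac{3}{1-12\lambda}\,\|\bar u\|_G$, which is exactly the claimed quasigeodesic bound (note $C = 0$, consistent with $\|\bar u\|_G$ scaling linearly).

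The main obstacle, I expect, is the bookkeeping of signed face counts through the quotient construction: showing precisely that the $[s,u_i]$-faces and $u_iv_i^{-1}$-faces do not allow the diagram to "cheat" by producing and cancelling $u_i$-faces freely, so that the net count genuinely reflects $k_i \bmod \ell_i$. This is where the aspherical/$C'(1/6)$ hypothesis on $U \cup V$ and \cref{ZeroSignedSum}, \cref{WLOG4} do the heavy lifting — they guarantee the reductions making $M_H$ bare and reduced preserve $\sigma(M_H, r)$ exactly. The second delicate point is the geometric surgery: one must argue that after passing to $M_H$, the portion of the boundary reading $\prod_i u_i^{k_i}$ still "sees" enough face-boundary to force the length inequality, which requires the piece bounds (a) and (c) to rule out long cancellations between $\tilde u$, distinct $u_i$'s, and $w$. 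The constant $3/(1-12\lambda)$ strongly suggests a two-layer Greendlinger argument (each face loses at most a $2\lambda$ fraction on each of two neighbours plus the boundary), and getting the constant exactly right is the final bit of arithmetic.
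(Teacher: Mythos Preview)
Your overall framework is correct---pass to the $C'(\lambda)$ quotient $H$, build a bare reduced diagram $M_H$ with boundary $uw^{-1}$ whose signed face counts are inherited from $M_G$ via \cref{M_HConstruction}, and finish with the perimeter-sum inequality $(1-6\lambda)\PS(M)\le\ell(\bd M)$ of \cref{perimeterSum}. But there is one error and one substantive gap.

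The error is the congruence. Each filling $M_F$ (for $F$ a face of $M_G$) satisfies $\sigma(M_F,u_i)+\sigma(M_F,v_i)\equiv 0\pmod{\ell_i}$, so summing gives $\sigma(M_H,u_i)+\sigma(M_H,v_i)\equiv 0\pmod{\ell_i}$, not $\equiv k_i$. The exponents $k_i$ live on the \emph{boundary} of $M_H$; they do not appear in the interior face count at this stage, and from $\sigma\equiv 0$ alone there is no lower bound on $\kappa(M_H,u_i)$ in terms of $k_i$. Your claim ``forces at least $|k_i|$ faces'' therefore does not follow.

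The real gap is the missing mechanism connecting the boundary data $k_i$ to face counts---the step you gloss as ``these faces are detected along $\prod u_i^{k_i}$''. The paper inserts a surgery here. Write $\bd M_H=\alpha*\beta$ with $\alpha=\tilde\alpha*\alpha_1*\cdots*\alpha_k$ and each $\Lab(\alpha_j)\in U$. Whenever a face $F$ of $M_H$ meets $\alpha$ in a subpath of length $\ge 2\lambda\,\ell(\bd F)$, condition~(c) forces the long overlap into a single $\alpha_j$, and $C'(\lambda)$ forces $\Lab(\bd F)=u_i$; one then \emph{excises} $\alpha_j$ from the boundary (\cref{pathExcise}), creating a new $u_i$-face that cancels $F$, and removes the pair. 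Iterating yields a second diagram $M_H'$ with boundary $\alpha'*\beta$, where $\Lab(\alpha')=\tilde u\prod u_i^{\,k_i-h_i}$ and no face has long overlap with $\alpha'$. Now \cref{perimeterSum} applied to $M_H'$ gives $\ell(\alpha')<2\ell(\beta)$. Meanwhile each excision removed a positively oriented $u_i$-face from $M_H$, so $\kappa(M_H,u_i)\ge h_i$; combined with $\sigma\equiv 0\pmod{\ell_i}$ and $h_i\le k_i\le\ell_i/2$ this forces $\kappa(M_H,u_i)+\kappa(M_H,v_i)\ge 2h_i$, whence $\PS(M_H)\ge 2\bigl(\ell(\alpha)-\ell(\alpha')\bigr)$. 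A second application of \cref{perimeterSum}, this time to $M_H$, together with the bound on $\ell(\alpha')$ yields $\ell(\alpha)<\tfrac{3}{1-12\lambda}\ell(\beta)$. So your ``two-layer'' intuition is exactly right, but the two layers are two \emph{diagrams} $M_H$ and $M_H'$, and the excision is what converts the boundary exponents $k_i$ into honest lower bounds $h_i$ on face counts; without it the argument does not close.
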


Note that if $U \cup V$ is $C'(\lambda)$, then so is $(U \cup V \cup \{u_i^{-1}\}) \smallsetminus \{u_i\}$.  Therefore assume without loss of generality that all $k_i$ are nonnegative.

Let $w$ be a geodesic representative of $u$ in $G$.  Then $uw^{-1} =_G 1$, so by the van Kampen Lemma, there exists a van Kampen diagram $M_G$ with $\Lab(\bd M_G) = uw^{-1}$.  

\begin{lemma}\label{specificM_HConstruction}
There exists a van Kampen diagram $M_H$ over $\langle S \mid U \cup V \rangle$ such that
\begin{enumerate}[label=\normalfont(\alph*)]
\item $M_H$ is bare and reduced.
\item $\bd M_H = \alpha * \beta$, where $\Lab(\alpha) = u$ and $\Lab(\beta) = w^{-1}$.
\item $\sigma(M_H, u_i)+\sigma(M_H, v_i) \equiv 0 \mod \ell_i$ for all $i \in \mathbb N$.
\item $\sigma(M_H, u_i) \equiv 0 \mod \ell_i$ for all $i \in \mathbb N$ such that $u_i = v_i$.
\end{enumerate}
\end{lemma}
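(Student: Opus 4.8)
The plan is to obtain $M_H$ by applying \cref{M_HConstruction} to $M_G$, choosing the auxiliary filling diagrams with enough care that conditions (c) and (d) reduce to a computation of signed face counts in a short list of model diagrams.

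First I would check that the hypotheses of \cref{M_HConstruction} hold for $R_G$ and $R_H$ as in \cref{quasiGeod}. By hypothesis (a) of \cref{quasiGeod}, $R_H = U\cup V$ is cyclically minimal, cyclically reduced, and $C'(\lambda)$ with $\lambda < \sfrac{1}{12} < \sfrac{1}{6}$, so $\langle S\mid R_H\rangle$ is a cyclically reduced $C'(\sfrac{1}{6})$ presentation; by hypothesis (b), $|r_H|\geq 2$ for every $r_H\in R_H$; and $H$ is a quotient of $G$ because every relator of $R_G$ is a word on $S$ together with full copies of the $u_i$ and $v_i$, each of which equals $1$ in $H$, so every relator of $R_G$ equals $1$ in $H$. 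Hence, given any choice of van Kampen diagrams $M_F$ over $\langle S\mid U\cup V\rangle$ with $\Lab(\bd M_F)=\Lab(\bd F)$ (one for each essential face $F$ of $M_G$, which exist by the van Kampen Lemma since $\Lab(\bd F)\in (R_G)_*$ and each relator of $R_G$ is trivial in $H$), \cref{M_HConstruction} produces a bare, reduced van Kampen diagram $M_H$ over $\langle S\mid U\cup V\rangle$ with $\Lab(\bd M_H)=\Lab(\bd M_G)=uw^{-1}$ and
\[
\sigma(M_H,r) \;=\; \sum\{\sigma(M_F,r)\mid F\text{ an essential face of }M_G\}
\]
for every $r\in U\cup V$. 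Conclusion (a) is then immediate, and (b) follows by cutting the boundary circuit of $M_H$ at the vertex reached after its initial $u$-labeled arc, writing $\bd M_H=\alpha*\beta$ with $\Lab(\alpha)=u$ and $\Lab(\beta)=w^{-1}$.

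Next I would choose the $M_F$. Each essential face $F$ of $M_G$ has $\Lab(\bd F)\in (R_G)_*$; fix $r\in R_G$ with $\Lab(\bd F)\in\{r\}_*$. For the three families of relators of $R_G$ I fix ``model'' fillings over $\langle S\mid U\cup V\rangle$: for $r=u_i^{\ell_i}$, a bouquet of $\ell_i$ copies of the single-$u_i$-face disk, which has $\sigma(\,\cdot\,,u_i)=\pm\ell_i$ and no other essential-relator face; for $r=[s,u_i]=su_is^{-1}u_i^{-1}$, two single-$u_i$-face disks joined by an $s$-labeled edge, whose two $u_i$-faces occur along the boundary with \emph{opposite} orientations, so $\sigma(\,\cdot\,,w)=0$ for all $w\in U\cup V$; for $r=u_iv_i^{-1}$ with $u_i\neq v_i$, one $u_i$-face together with one $v_i$-face, giving $\sigma(\,\cdot\,,u_i)=1$, $\sigma(\,\cdot\,,v_i)=-1$ and nothing else; and for $r=u_iv_i^{-1}$ with $u_i=v_i$ (a freely trivial relator), a bare tree spelling $u_iv_i^{-1}$, with all signed face counts $0$. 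If $\Lab(\bd F)$ is a cyclic shift of $r$, take $M_F$ to be the corresponding model rebased at a shifted vertex (all $\sigma(M_F,\,\cdot\,)$ unchanged); if it is a cyclic shift of $r^{-1}$, take the planar mirror image of the model (every $\sigma(M_F,\,\cdot\,)$ negated). By hypothesis (d) of \cref{quasiGeod}, the words of $U\cup V$ carry pairwise distinct indices, so for a fixed $i$ the only models possessing a $u_i$- or $v_i$-face are those attached to the index-$i$ relators $u_i^{\ell_i}$, $[s,u_i]$ $(s\in S)$, and $u_iv_i^{-1}$.

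Finally I would sum over the essential faces of $M_G$. For each index-$i$ model the quantity $\sigma(\,\cdot\,,u_i)+\sigma(\,\cdot\,,v_i)$ is a multiple of $\ell_i$ ($\pm\ell_i$ or $\pm 2\ell_i$ for $u_i^{\ell_i}$, and $0$ for the other two families), and mirroring only negates it; hence $\sigma(M_F,u_i)+\sigma(M_F,v_i)\equiv 0\pmod{\ell_i}$ for every $F$, and summing through the displayed identity gives (c). Likewise, when $u_i=v_i$ the only contributions to $\sigma(M_H,u_i)$ come from $u_i^{\ell_i}$-models ($\pm\ell_i$) and $[s,u_i]$-models ($0$), so $\sigma(M_F,u_i)\equiv 0\pmod{\ell_i}$ for every $F$, and summing gives (d). The main obstacle I anticipate is not conceptual but a matter of bookkeeping: checking carefully that the two $u_i$-faces of the $[s,u_i]$-model really appear with opposite signs along the boundary circuit (so its net $u_i$-count is $0$ rather than $\pm 2$) — this reflects the fact that $u_i$ and $u_i^{-1}$ occur in the commutator with opposite exponents — and confirming, via hypothesis (d), that no model for an index $j\neq i$ secretly contains a $u_i$- or $v_i$-face. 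Everything else is a routine application of \cref{M_HConstruction} together with the observation that cyclic shifts preserve, and inversion negates, every signed face count.
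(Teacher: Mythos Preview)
Your proposal is correct and follows essentially the same route as the paper: choose explicit model fillings $M_F$ for each relator type of $R_G$, apply \cref{M_HConstruction}, and read off (c) and (d) from the signed face counts of the models. Your write-up is in fact more careful than the paper's about verifying the hypotheses of \cref{M_HConstruction}, handling cyclic shifts and inverses of the relators, and treating the degenerate case $u_i=v_i$ for the relator $u_iv_i^{-1}$ separately; the paper simply draws the three model diagrams and asserts the relevant congruences.
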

\begin{proof}
Each face $F$ of $M_G$ has boundary label equal to either $[s,u_i]$, $u_i^{\ell_i}$, or $u_iv_i^{-1}$.  Each of these words represents the trivial elment of $H$.  For each face $F$ of $M_G$, choose a van Kampen diagram $M_F$ over $\langle S \mid R_H \rangle$, of one of forms depicted in Figure \ref{M_FFigure}.

\begin{figure}[h!]
\centering
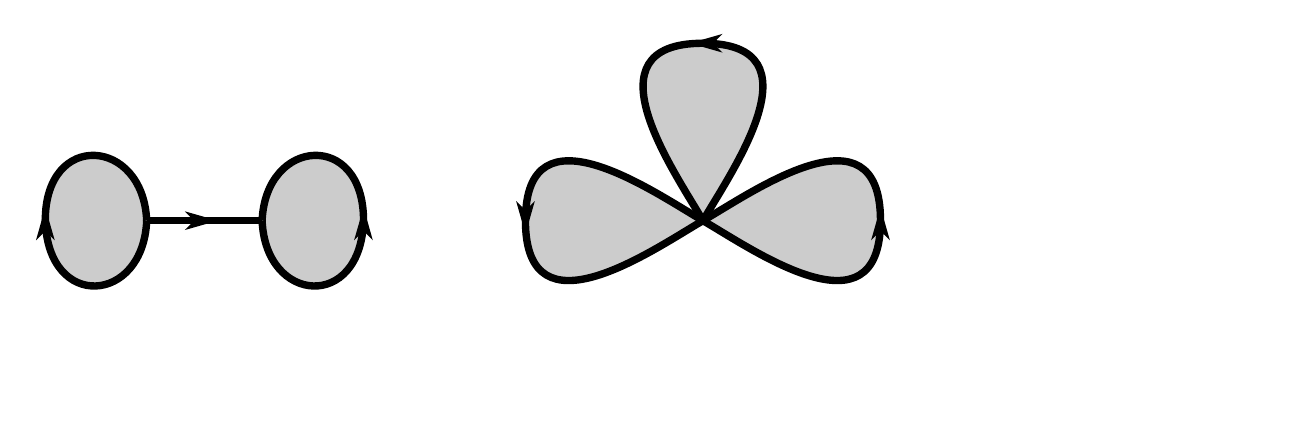
\caption{}
\label{M_FFigure}
\end{figure}

Applying \cref{M_HConstruction}, there exists a bare, reduced van Kampen diagram $M_H$ with $\Lab(M_H) = \Lab(M_G) = uw^{-1}$, such that for all $i \in \mathbb N$,
$$\sigma(M,u_i)+\sigma(M,v_i) = \sum\{\sigma(M_F, u_i)+\sigma(M_F,v_i) \mid F \text{ is a face of } M_G\} \, .$$
Thus (a) and (b) hold.  Now notice that for all $M_F$ depicted in Figure \ref{M_FFigure}, $\sigma(M_F, u_i)+\sigma(M_F, v_i) \equiv 0 \mod \ell_i$.  Also, if $u_i = v_i$, then $\sigma(M_F, u_i) \equiv 0 \mod \ell_i$.  Thus (c) and (d) hold as well.
\end{proof}

Let $M_H$ be the van Kampen diagram from \cref{specificM_HConstruction}.  Let $k = \sum_{i \in \mathbb N} k_i$.  Then we may write
$$\alpha = \tilde \alpha * \alpha_0 * \dots * \alpha_k * \beta$$
where $\Lab(\tilde \alpha) = \tilde u$, and for all $j \in \{0, \ldots, k\}$ we have $\Lab(\alpha_j) = u_i$ for some $i \in \mathbb N$.
\begin{lemma}\label{M_H'Construction}
There exists a van Kampen diagram $M_H'$ over $\langle S \mid U \cup V \rangle$ and natural numbers $\{h_i \mid i \in \mathbb N\}$ satisfying all of the following conditions for all $i \in \mathbb N$.
\begin{enumerate}[label=\normalfont(\alph*)]
\item $M_H'$ is bare and reduced.
\item $\kappa(M_H', u_i) = \kappa(M_H, u_i)-h_i$.
\item $\bd M_H' = \alpha' * \beta'$, where $\Lab(\alpha') = \tilde u \prod_{i=0}^\infty u_i^{k_i-h_i}$ and $\Lab(\beta') = w$.
\item No face $F$ of $M_H'$ intersects $\alpha'$ in a common subpath of length at least $2\lambda \ell(\bd F)$.
\item $0 \leq h_i \leq k_i \leq \ell_i/2$.
\end{enumerate}
\end{lemma}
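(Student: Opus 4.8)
The plan is to obtain $M_H'$ from the diagram $M_H$ of \cref{specificM_HConstruction} by repeatedly applying one surgery, each application of which strictly decreases the number of faces; since face counts are non‑negative, the process terminates, and it will be arranged that it can terminate \emph{only} when condition (d) holds. Throughout the iteration one maintains the invariants that the current diagram $N$ is bare and reduced, that $\bd N = \alpha_N * \beta$ with the arc $\beta$ (label $w^{-1}$) never altered and $\Lab(\alpha_N) = \tilde u \prod_i u_i^{k_i^N}$ for some integers $0 \le k_i^N \le k_i$, and that $\kappa(N,u_i) = \kappa(M_H,u_i) - (k_i - k_i^N)$ for every $i$ while every $r$‑face count with $r \notin U$ is unchanged. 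Setting $h_i := k_i - k_i^{M_H'}$ at termination then yields (b), (c), (e) at once; (a) is one of the invariants; and (d) is the termination criterion. So everything reduces to the surgery step together with a structural dichotomy explaining why failure of (d) always permits the surgery.

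The structural part is the heart of the argument. Suppose the current bare reduced diagram $N$ fails (d): some face $F$ meets $\alpha_N$ in a common subpath $\rho$ with $\ell(\rho) \ge 2\lambda\,\ell(\bd F)$. Using the $C'(\lambda)$ hypothesis (a) of \cref{quasiGeod} together with (b), (c), (d) of that lemma, I would first show $\Lab(\bd F)$ is a cyclic shift of some $u_i$. Indeed, $\Lab(\rho)$ is simultaneously a factor of a cyclic shift of $\Lab(\bd F)$ and a factor of $\tilde u \prod_j u_j^{k_j}$; a whole copy of any $u_j$ appearing inside $\Lab(\rho)$ would be a piece of $\Lab(\bd F)$ and $u_j$ of length $|u_j| \ge \lambda\min(\cdot,\cdot)$ unless $\Lab(\bd F) \in \{u_j\}_*$ (contradiction with $\lambda<1$), so $\Lab(\rho)$ meets the $u$‑powers of $\alpha_N$ only inside at most two runs $u_{j}^{k_{j}}$ and may dip into $\tilde\alpha$ only at the very start; each such portion is a piece against a relator or against $\tilde u$, hence of length $<\lambda\,\ell(\bd F)$ (the last using (c)), forcing $\ell(\rho)<2\lambda\,\ell(\bd F)$ unless $\Lab(\bd F)\in\{u_i\}_*$ for some $i$ and the middle portion of $\rho$ lies inside a single homogeneous run $\alpha_a*\cdots*\alpha_b$ of blocks labeled $u_i$ (condition (d) of \cref{quasiGeod} forcing homogeneity). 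Since $u_i$ is primitive (again by $C'(\lambda)$) and $\ell(\bd F)=|u_i|$, the overlap of $\bd F$ with this run is an aligned window straddling at most two consecutive blocks with overlap lengths summing to $|u_i|$; hence $\bd F$ overlaps some full block $\alpha_j$ of the run in a subpath of length $>\tfrac12|u_i|>\lambda|u_i| = \lambda\min(\ell(\bd F),|u_i|)$.

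Given such an $F$ and such a block $\alpha_j$, the surgery is to ``push $F$ out across $\alpha_j$'': writing $\bd F = \rho_F * \tau$ with $\rho_F = \bd F \cap \bd N$ the boundary arc and $\tau$ the complementary (interior) arc, delete $F$ and let the boundary follow the reverse of $\tau$ in place of $\rho_F$. Deleting a face never creates a new cancelling pair, so the result stays bare and reduced; and since $\rho_F$ is a prefix of the cyclic shift $\Lab(\bd F)$ of $u_i$ and the run continues that same cyclic shift on both sides of $\rho_F$, freely reducing the new boundary word cancels all of $\bar\tau$ against the adjacent run, so that $\alpha_N$ loses exactly one factor $u_i$. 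The net effect: the face count drops by one, $\kappa(\cdot,u_i)$ drops by one, every other $r$‑face count and the arc $\beta$ are untouched, and $k_i^N$ decreases by one. All invariants are restored, and iterating until (d) holds produces $M_H'$.

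The main obstacle is making the push‑out rigorous in the degenerate planar configurations: $\bd F$ may meet $\bd N$ in more than the single arc $\rho_F$ (for instance touching $\beta$, where there is no a priori small‑cancellation control between $u_i$ and the arbitrary geodesic word $w$), the endpoints of the block $\alpha_j$ may coincide in $N$ so that the relevant subpath is a loop, and one must check that the calibration $\lambda<\sfrac1{12}$ (so $2\lambda<\sfrac16$, keeping us inside the $C'(\sfrac16)$ regime where \cref{corFacesAreDisks} applies) really does force the clean alignment claimed above. Each of these is handled by the planar surgery already developed: pad vertices (\cref{padVertex}) to make the pertinent faces and subpaths simple, argue by the piece bounds that any part of $\bd F$ lying on $\beta$ is too short to belong to the push‑out arc, treat a loop block $\alpha_j$ by deleting the simple subdiagram it bounds (\cref{diskDelete}), and re‑normalize with \cref{WLOG4} if needed; the $r$‑face‑count bookkeeping recorded for \cref{pathExcise} and for removing cancelling pairs is exactly what guarantees invariant (b) survives each such step.
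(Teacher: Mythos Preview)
Your high-level strategy (iterate a face-reducing surgery until (d) holds, maintaining the listed invariants) matches the paper's, and your structural claim that a violating face $F$ must be a $u_i$-face is correct, though your argument for it is more elaborate than needed. The gap is in the surgery itself. ``Delete $F$ and freely reduce the boundary'' is not a well-defined diagram operation: after deleting $F$, the interior arc $\tau$ lies on $\bd N$, but its edges are still shared with the faces that used to neighbour $F$, so you cannot fold $\tau$ against the adjacent run of $\alpha$ without identifying edges on the boundaries of other faces and potentially creating new cancellations or destroying planarity. Your obstacle paragraph does not repair this: padding vertices destroys the ``bare'' invariant; re-normalizing with \cref{WLOG4} preserves \emph{signed} counts $\sigma$ but not the \emph{unsigned} counts $\kappa$ you are tracking in (b); and \cref{diskDelete} cannot remove a loop block $\alpha_j$, since its boundary label $u_i$ is not trivial in $F(S)$. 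You also tacitly assume $\bd F \cap \bd N$ is a single arc $\rho_F$, which \cref{alphaFSingleSubpath} guarantees only for faces with \emph{short} overlap on $\alpha$, precisely the opposite of the situation here.

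The paper's surgery works from the boundary side rather than from $F$. It applies \cref{pathExcise} to the whole block $\alpha_j$: pinching its endpoints through the unbounded face turns $\alpha_j$ into a new $u_i$-face $F'$ and shortens $\Lab(\alpha)$ by exactly one factor $u_i$ in a single well-defined step. Since $F$ and $F'$ share the subpath $\gamma$ of length $\ge\lambda\,\ell(\bd F)$, they cancel; since the diagram was reduced before the excision, $(F,F')$ is the \emph{only} cancelling pair, so \cref{corFacesAreDisks}(c) makes them simple and simply intersecting, and \cref{diskDelete} removes $F\cup F'$ without touching the boundary label. The net effect is exactly what you wanted---one $u_i$ gone from $\alpha$, $\kappa(\cdot,u_i)$ down by one, bare and reduced preserved---but obtained entirely through operations whose effects on $\kappa$ and on $\Lab(\bd M)$ are already recorded in \cref{vanKampenOps}.
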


\begin{proof}
If $M_H$ already satisfies (d), then all conditions are satisfied by setting $M_H' = M_H$ and $h_i = 0$ for all $i \in \mathbb N$.  Therefore suppose that $M_H$ does not satisfy (d), i.e. there exists a face $F$ of $M_H$ such that $\bd F$ intersects $\alpha$ in a common subpath of length at least $2\lambda \ell(\bd F)$.  Then there must be a common subpath of $\bd F$ and $\tilde \alpha$ or $\alpha_j$ for some $j \in \{0, \ldots, k\}$, of length at least $\lambda \ell(\bd F)$.  The former possibility is excluded by condition (c) of \cref{quasiGeod}.  Thus $\bd F$ intersects $\alpha_j$ in a common subpath of length at least $\lambda \ell(\bd F)$ for some $j \in \{0, \ldots, k\}$.  Call this common subpath $\gamma$.  Since $\Lab(\alpha_j) = u_i$ for some $i \in \mathbb N$, by the $C'(\lambda)$ condition we have that $\Lab(\bd F) = u_i$ as well.

Now apply \cref{pathExcise} to excise $\alpha_j$ from $\alpha$.  Let $F'$ be the new $u_i$-face created by this operation.  Then $\gamma$ is a common subpath of $F$ and $F'$ of length at least $\lambda \ell(\bd F) = \lambda \ell(\bd F')$, so $F$ and $F'$ cancel.  Since $M_H$ was reduced, $F$ and $F'$ are the only pair of faces that cancel at this stage.  Therefore by \cref{corFacesAreDisks} (c), $F$ and $F'$ are simple and intersect simply.  Thus $F \cup F'$ is a simple subdiagram of $M$ with trivial boundary label, which we may remove with \cref{diskDelete}.  

Let $\hat M_H$ be the van Kampen diagram obtained in this way.  Then clearly $\hat M_H$ satisfies (a).  We added one $u_i$-face and removed two, so $\kappa(\hat M_H, u_i) = \kappa(M_H, u_i)-1$.  Since $u_i \neq u_j$ whenever $i \neq j$, no $u_j$-face counts were affected for any $j \neq i$.  Therefore (b) is satisfied with $h_i=1$.  Now after excising $\alpha_j$ the boundary path becomes $\hat \alpha * \beta := \tilde \alpha * \alpha_0 * \dots * \alpha_{j-1} * \alpha_{j+1} * \dots * \alpha_k * \beta$.  Removing $F \cup F'$ does not change the boundary label, so (c) is satisfied with $h_i=1$.  Because of this, we may iterate the process.  By construction, $\hat M_H$ has one fewer face than $M_H$ which fails to satisfy (d).  Therefore repeat as many times as there are faces in $M_H$ failing to satisfy (d) to get $M_H'$.  Each such face must be a $u_i$-face for some $i \in \mathbb N$, so for each $i \in \mathbb N$, let $h_i$ be the number of $u_i$-faces in $M_H$ failing to satisfy (d).  Since the boundary label becomes shorter at each step, by (c) it follows that $h_i \leq k_i$ for all $i \in \mathbb N$.  Therefore $M_H'$ satisfies (e), and we are done.
\end{proof}

For the next step in the proof, the following ad hoc lemma is useful.

\begin{lemma}\label{alphaFSingleSubpath}
Let $M$ be a bare, reduced van Kampen diagram over a cyclically reduced $C'(\sfrac{1}{6})$ presentation.  Let $\alpha$ be a subpath of $\bd M$ such that no face $F$ of $M$ intersects $\alpha$ in a common subpath of length at least $\frac{1}{4}\ell(\bd F)$.  Then every face of $M$ that intersects $\alpha$ nontrivially, intersects $\alpha$ simply.
\end{lemma}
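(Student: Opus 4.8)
The plan is to run the argument used for \cref{facesAreDisks}(b): take a counterexample $(M,\alpha)$ with $M$ having the fewest faces, use a face $F$ that meets $\alpha$ non-simply to cut off a strictly smaller subdiagram $D$, and feed $D$ to the Greendlinger Lemma. The constant $\sfrac{1}{4}$ in the hypothesis is exactly what is needed so that Greendlinger's $\sfrac{1}{2}$ remains useful after it is split between the two sides of $\bd D$.

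First I would fix such an $F$. By \cref{corFacesAreDisks}(b), $\bd F$ is a simple closed curve, and (just as in the proof of \cref{facesAreDisks}(b)) a nontrivial non-simple intersection forces $\bd F$ and $\alpha$ to share at least two distinct maximal common subpaths. Choose two of them, $p_1$ and $p_2$, consecutive along $\alpha$, and let $\alpha'$ be the stretch of $\alpha$ between them; then $\alpha'$ together with the arc $q$ of $\bd F$ joining its endpoints along the side facing $\alpha'$ bounds a simple subdiagram $D$ with $\bd D = \alpha' * \bar q$ and $q \subseteq \bd F$. Since $\bd F$ is a simple closed curve bounding only the face $F$, the subdiagram $D$ lies outside $\bd F$, so $D$ is nonempty (because $q \neq \alpha'$) and $F$ is not a face of $D$; in particular $D$ has strictly fewer faces than $M$. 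As a subdiagram of the bare, reduced $M$, the diagram $D$ is itself bare and reduced over the same $C'(\sfrac{1}{6})$ presentation, and after pruning pendant (spike) edges --- which removes no faces --- we may take $\Lab(\bd D)$ cyclically reduced. Finally, the hypothesis of the lemma passes from $(M,\alpha)$ to $(D,\alpha')$, since every face of $D$ is a face of $M$ and $\alpha' \subseteq \alpha$.

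Now I would apply the Greendlinger Lemma to $D$ to obtain a face $E$ of $D$ whose boundary shares a single subpath $P \subseteq \bd D$ with $\ell(P) > \sfrac{1}{2}\,\ell(\bd E)$. Since $\bd D$ is the cycle $\alpha' * \bar q$ with only two junction vertices, $P$ decomposes into at most two subpaths inside $\alpha'$ and at most two inside $q$, and I would split into cases. If $P$ meets $\alpha' \subseteq \alpha$ in a single subpath of length at least $\sfrac{1}{4}\,\ell(\bd E)$, this contradicts the hypothesis applied to the face $E$ of $M$. If $P$ meets $q \subseteq \bd F$ in a single subpath of length at least $\sfrac{1}{4}\,\ell(\bd E)$, then $E \neq F$ and $\sfrac{1}{4} > \sfrac{1}{6}$, so \cref{SmallCancellationGeometric} forces $E$ and $F$ to cancel, contradicting reducedness of $M$. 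If $P$ meets $\alpha'$ in two disjoint nontrivial subpaths, then $E$ is a face of $D$ meeting $\alpha'$ nontrivially and not simply, contradicting minimality of $(M,\alpha)$ since $D$ has fewer faces. If $P$ meets $q \subseteq \bd F$ in two disjoint nontrivial subpaths, then $E$ meets $\bd F$ nontrivially and not simply, contradicting \cref{facesAreDisks}(b). A short check, using $\ell(P) > \sfrac{1}{2}\,\ell(\bd E)$ and counting how $P$ can straddle the two junctions of $\bd D$, shows that these four cases are exhaustive, so no counterexample exists.

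The step I expect to be delicate is the planar-topology setup of $D$ --- verifying that an arc of $\bd F$ and a stretch of $\bd M$ genuinely cut off a simple subdiagram that avoids $F$, and handling the non-reducedness of $\bd D$ coming from spikes --- together with the exhaustiveness check, whose content is that the configuration in which $P$ straddles both junctions of $\bd D$ is precisely the one that must be routed into \cref{facesAreDisks}(b) or into the induction on the number of faces. Everything else is the standard $C'(\sfrac{1}{6})$/Greendlinger machinery already in use in this section.
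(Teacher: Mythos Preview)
Your proof is correct and follows the same strategy as the paper's: enclose a simple subdiagram $D$ between arcs of $\alpha$ and $\bd F$, apply Greendlinger to $D$, and contradict either the hypothesis on $\alpha$ or reducedness of $M$ via $C'(\sfrac{1}{6})$. The paper's version is more compressed---it asserts directly that the Greendlinger face meets $\alpha$ or $\bd F$ in a single subpath of length at least $\tfrac{1}{4}\ell(\bd F')$, without the minimal-counterexample framework---so your explicit cases~3 and~4 (dispatched via minimality and \cref{facesAreDisks}(b)) make rigorous the edge case where the Greendlinger arc wraps past both junctions of $\bd D$, which the paper's one-line inference leaves implicit.
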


\begin{proof}
Suppose that $F$ is a face of $M$ such that $\bd F$ shares more than one vertex with $\alpha$, but $\bd F$ does not intersect $\alpha$ simply.  Then there exist subpaths of $\alpha$ and $\bd F$ that together enclose a simple subdiagram $D$ of $M$.  Since $M$, and therefore $D$, is reduced, by the Greendlinger Lemma there exists a face $F'$ of $D$ such that $\bd F'$ shares a common subpath of length at least $\frac{1}{2}\ell(\bd F')$ with $\bd D$.  Thus $\bd F'$ intersects either $\bd F$ or $\alpha$ in a common subpath of length at least $\frac{1}{4}\ell(\bd F')$.  The latter possibility is ruled out by assumption, so $F$ cancels with $F'$ by the $C'(\sfrac{1}{6})$ condition.  But this contradicts our assumption that $M$ is reduced.
\end{proof}

\begin{definition}
Let $M$ be a van Kampen diagram over a presentation $\langle S \mid R \rangle$.  Then the \emph{perimeter sum} of $M$, denoted $\PS(M)$, is defined by
$$\PS(M) = \sum\{\ell(\bd F) \mid F \text{ is a face of } M\} \, .$$
Note that if $M$ is bare, then 
$$\PS(M) = \sum_{r \in R} |r|\kappa(M,r) \, .$$
\end{definition}

To obtain bounds on $\ell(\alpha')$ in terms of $\ell(\beta)$, and on $\ell(\alpha)$ in terms of $\ell(\alpha')$, we use the following fact about van Kampen diagrams over $C'(\sfrac{1}{6})$ presentations.  It is the final puzzle piece in the proof.

\begin{lemma}\cite[Lemma 3.8]{Olshanskii_Osin_Sapir}\label{perimeterSum}
Let $M$ be a bare and reduced van Kampen diagram over a cyclically reduced $C'(\lambda)$ presentation, where $\lambda \leq \sfrac{1}{6}$.  Then $(1-6\lambda)\PS(M) \leq \ell(\bd M)$.
\end{lemma}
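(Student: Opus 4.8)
The plan is to prove the inequality by induction on the number of faces of $M$, peeling off one boundary face at a time by means of Greendlinger's Lemma. First I would reduce to the case where $\Lab(\bd M)$ is cyclically reduced: deleting a pendant edge (a degree-one vertex of $M$) leaves $\PS(M)$ and all the hypotheses unchanged while decreasing $\ell(\bd M)$ by $2$, so trimming all pendant edges only makes the inequality easier, and afterward $\Lab(\bd M)$ may be assumed cyclically reduced. The base case, $M$ with no faces, is immediate since then $\PS(M)=0\le\ell(\bd M)$. For the inductive step, recall that by \cref{facesAreDisks} every face of the bare, reduced $C'(\sfrac{1}{6})$ diagram $M$ is simple and any two faces intersect simply; hence $\bd F$ decomposes into maximal arcs, each either contained in $\bd M$ or shared with exactly one other face. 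An arc of the second kind is a piece, so by the $C'(\lambda)$ condition it has length strictly less than $\lambda\,\ell(\bd F)$.

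The crux is to locate a boundary face whose interior arcs are short in total. I claim there is a face $F$ of $M$ such that, writing $\ell(\bd F)=\ell(\gamma)+\ell(\delta)$ where $\gamma$ is the union of the arcs of $\bd F$ lying in $\bd M$ and $\delta$ is the union of its interior arcs, one has $\ell(\delta)\le 3\lambda\,\ell(\bd F)$. This is the standard strengthening of the quoted Greendlinger Lemma in the theory of $C'(\sfrac{1}{6})$ diagrams: contracting each maximal arc of $M$ to a single edge yields a planar map in which interior vertices have degree at least $3$ and interior faces have at least seven sides (each side is a piece, of length less than $\lambda\,\ell(\bd F)$, and $6\lambda\le1$), and a combinatorial Gauss--Bonnet (Euler-characteristic) count on such a map forces some face touching $\bd M$ to have at most three interior arcs, hence $\ell(\delta)<3\lambda\,\ell(\bd F)$; see \cite{Lyndon_Schupp}. (Alternatively one could dispense with the induction and apply combinatorial Gauss--Bonnet directly to $M$ with the usual small-cancellation angle assignment; I expect the inductive route to be the shorter one here.)

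Granting such an $F$, let $M'$ be obtained by deleting $F$, that is, absorbing it into the unbounded region, and then re-trimming pendant edges. Then $M'$ is again bare and reduced, has one fewer face, and satisfies $\PS(M')=\PS(M)-\ell(\bd F)$ while $\ell(\bd M')=\ell(\bd M)-\ell(\gamma)+\ell(\delta)=\ell(\bd M)-\ell(\bd F)+2\ell(\delta)$; should deleting $F$ disconnect $M'$, one applies the statement to each component separately and adds. Plugging $M'$ into the induction hypothesis and using $\PS(M)=\PS(M')+\ell(\bd F)$ gives
$$(1-6\lambda)\,\PS(M)\;\le\;\ell(\bd M')+(1-6\lambda)\,\ell(\bd F)\;=\;\ell(\bd M)+2\ell(\delta)-6\lambda\,\ell(\bd F),$$
and the right-hand side is at most $\ell(\bd M)$ exactly because $\ell(\delta)\le 3\lambda\,\ell(\bd F)$. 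This makes transparent why the threshold $\sfrac{1}{6}$ and the constant $1-6\lambda$ are the right ones.

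The main obstacle is twofold. First, one must upgrade the Greendlinger Lemma as stated above to the ``at most three interior arcs'' form; this is classical $C'(\sfrac{1}{6})$ small-cancellation theory, but it has to be reformulated carefully in the present graph-theoretic setup, in particular bookkeeping in terms of maximal arcs rather than individual edges. Second, the deletion step requires honest topological care: one must verify that $M'$ stays reduced, that absorbing $F$ introduces no non-cyclically-reduced boundary beyond removable pendant edges, and one must handle the case where $F$ meets $\bd M$ in several arcs so that $M'$ breaks into pieces. None of this is deep, but it all has to be carried out cleanly; once it is, the displayed line closes the induction.
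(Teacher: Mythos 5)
The paper does not prove this lemma at all: it is imported verbatim as \cite[Lemma 3.8]{Olshanskii_Osin_Sapir}, so there is no in-paper argument to compare against. Your sketch is the standard proof of such perimeter estimates and the arithmetic is right: the induction step needs exactly a face $F$ whose non-exterior part has length at most $3\lambda\,\ell(\bd F)$, which is why the constant $1-6\lambda$ appears. Two caveats are worth making explicit. First, the input you need is the \emph{strong} form of Greendlinger's lemma (a boundary face with a single consecutive exterior arc of length greater than $(1-3\lambda)\ell(\bd F)$, as in Lyndon--Schupp, Ch.~V, Thm.~4.5); the version quoted in this paper (a common subpath of length more than $\tfrac{1}{2}\ell(\bd F)$) is strictly weaker and does not suffice when $\lambda<\sfrac{1}{6}$, so you cannot get away with the lemma as stated here, and your Gauss--Bonnet justification is the right way to supply the stronger statement. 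Second, your preliminary reduction is slightly off: trimming pendant edges does not guarantee a cyclically reduced boundary label, since a cancellation $ss^{-1}$ in $\Lab(\bd M)$ can come from two distinct boundary edges meeting at a (cut) vertex; fixing this requires folding such edge pairs (which can in principle create a cancelling pair of faces, to be removed before invoking Greendlinger) or, alternatively, applying the strong Greendlinger statement to an extremal disc component of $M$. With those points handled, together with the disconnection bookkeeping you already flag when a Greendlinger face meets $\bd M$ in several arcs, the induction closes exactly as you display, and this is in substance the argument behind the cited lemma.
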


With this in mind, we resume our proof.

\begin{lemma}\label{ineq1}
$\ell(\alpha') < 2\ell(\beta)$.
\end{lemma}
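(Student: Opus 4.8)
The plan is to apply the perimeter-sum inequality \cref{perimeterSum} to $M_H'$ and play it against the ``light contact'' condition \cref{M_H'Construction}~(d). Since $\langle S \mid U \cup V\rangle$ is a cyclically reduced $C'(\lambda)$ presentation with $\lambda < \sfrac{1}{12} \le \sfrac16$ (\cref{quasiGeod}~(a)) and $M_H'$ is bare and reduced (\cref{M_H'Construction}~(a)), \cref{perimeterSum} gives
\[
(1-6\lambda)\,\PS(M_H') \;\le\; \ell(\bd M_H') \;=\; \ell(\alpha') + \ell(\beta'),
\]
and $\ell(\beta') = |w| = \ell(\beta)$ because $\Lab(\beta) = w^{-1}$. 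The crux is the complementary estimate
\[
\ell(\alpha') \;\le\; 2\lambda\,\PS(M_H') + \ell(\beta').
\]
To get it, observe that $2\lambda < \sfrac14$, so condition (d) of \cref{M_H'Construction} lets us apply \cref{alphaFSingleSubpath}: every face $F$ of $M_H'$ that meets $\alpha'$ nontrivially meets it in a \emph{single} common subpath, whose length is strictly less than $2\lambda\,\ell(\bd F)$ by (d). By \cref{corFacesAreDisks}~(b) every face of $M_H'$ is simple, and an edge of $\alpha' \subseteq \bd M_H'$ that borders a bounded face borders exactly one; hence the number of edges of $\alpha'$ lying on a bounded face is at most $\sum_F 2\lambda\,\ell(\bd F) = 2\lambda\,\PS(M_H')$. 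Every remaining edge of $\alpha'$ is a ``bridge'' of $M_H'$ (it borders no bounded face), hence is traversed exactly twice by $\bd M_H' = \alpha' * \beta'$; using the form $\Lab(\alpha') = \tilde u \prod_i u_i^{k_i - h_i}$ together with \cref{quasiGeod}~(a),(c) one checks that $\alpha'$ never backtracks across such an edge, so each such edge is traversed once by $\alpha'$ and once by $\beta'$, contributing at most $\ell(\beta')$ in total. This establishes the second display.

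Substituting the second display into the first yields $(1-6\lambda)\,\PS(M_H') \le 2\lambda\,\PS(M_H') + 2\ell(\beta')$, that is $(1-8\lambda)\,\PS(M_H') \le 2\ell(\beta')$. Since $\lambda < \sfrac{1}{12}$ we have $1-8\lambda > \sfrac13 > 0$, so $\PS(M_H') \le \tfrac{2\ell(\beta')}{1-8\lambda}$, and therefore
\[
\ell(\alpha') \;\le\; 2\lambda\,\PS(M_H') + \ell(\beta') \;\le\; \Big(\tfrac{4\lambda}{1-8\lambda} + 1\Big)\ell(\beta') \;=\; \tfrac{1-4\lambda}{1-8\lambda}\,\ell(\beta').
\]
Now $\tfrac{1-4\lambda}{1-8\lambda} < 2$ is equivalent to $1 - 4\lambda < 2 - 16\lambda$, i.e. to $\lambda < \sfrac{1}{12}$, which is exactly our hypothesis; hence $\ell(\alpha') < 2\ell(\beta') = 2\ell(\beta)$. (The degenerate case $\ell(\beta) = 0$, in which $\PS(M_H') = 0$ and $M_H'$ collapses, is handled directly.)

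I expect the one genuinely delicate point to be the justification of the second displayed estimate — specifically, the claim that every edge of $\alpha'$ either bounds a face of $M_H'$ or is an edge shared with $\beta'$. Ruling out pendant edges and self-backtracking of $\alpha'$ is where the precise form of $u$ and hypotheses (a)--(d) of \cref{quasiGeod} are used; the rest is the arithmetic above, in which the strengthened bound $\lambda < \sfrac{1}{12}$ (rather than just $\lambda < \sfrac16$) enters in exactly one place, to make $\tfrac{1-4\lambda}{1-8\lambda} < 2$.
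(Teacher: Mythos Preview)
Your proof is correct and follows essentially the same approach as the paper: both combine the light-contact condition \cref{M_H'Construction}~(d) (via \cref{alphaFSingleSubpath}) with the perimeter-sum inequality \cref{perimeterSum}, arriving at the identical intermediate bound $\ell(\alpha') < \tfrac{1-4\lambda}{1-8\lambda}\ell(\beta')$. The only difference is cosmetic algebra: the paper sandwiches $\PS(M_H')$ between $\tfrac{1}{2\lambda}(\ell(\alpha')-\ell(\beta'))$ and $\tfrac{1}{1-6\lambda}(\ell(\alpha')+\ell(\beta'))$ and compares the ends directly, while you first solve for $\PS(M_H')$ and substitute back; and you are more explicit than the paper about the bridge-edge dichotomy (the paper asserts it with a bare ``Note that\ldots'').
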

\begin{proof}
Note that an edge of $\alpha'$ is shared by the boundary path of some face of $M_H'$ if and only if it is not also an edge of $\beta'$.  We have by \cref{M_H'Construction} (d) that no face $F$ intersects $\alpha'$ in a common subpath of length at least $2\lambda \ell(\bd F) < \frac{1}{6} \ell(\bd F) < \frac{1}{4}\ell(\bd F)$.  Therefore by \cref{alphaFSingleSubpath}, every face whose boundary path shares an edge with $\alpha'$ intersects $\alpha'$ in a single common subpath.  Thus
$$\PS(M_H) > \tfrac{1}{2\lambda}\ell(\alpha' \smallsetminus \beta') \geq  \tfrac{1}{2\lambda}\left(\ell(\alpha') - \ell(\beta')\right) = \tfrac{1}{2\lambda} \left(\ell(\alpha')-\ell(\beta')\right)$$
On the other hand, $\ell(\bd M_H') = \ell(\alpha')+\ell(\beta')$.  Thus by \cref{perimeterSum},
\begin{align*}
\tfrac{1}{2\lambda}\left(\ell(\alpha') - \ell(\beta')\right) < \PS(M_H') &\leq \tfrac{1}{1-6\lambda}\ell(\bd M_H') = \tfrac{1}{1-6\lambda}\left(\ell(\alpha')+\ell(\beta')\right)\\
(1-6\lambda)(\ell(\alpha')-\ell(\beta')) &< 2\lambda(\ell(\alpha')+\ell(\beta'))\\
(1-8\lambda)\ell(\alpha') &< (1-4\lambda)\ell(\beta')\\
\ell(\alpha') &< \tfrac{1-4\lambda}{1-8\lambda}\ell(\beta') < 2\ell(\beta') = 2\ell(\beta) \, ,
\end{align*}
since $0 < \lambda < \sfrac{1}{12}$.
\end{proof}

\begin{lemma}\label{ineq2}
$\PS(M_H) \geq 2(\ell(\alpha)-\ell(\alpha'))$.
\end{lemma}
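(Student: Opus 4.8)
The plan is to bound $\PS(M_H)$ from below by counting, for each $i\in\mathbb N$, the faces of $M_H$ whose boundary label lies in $\{u_i,v_i\}$, and to show that this count is at least $2h_i$, not merely $h_i$; the factor $2$ is exactly what makes the estimate strong enough later. First I would record that comparing the boundary labels supplied by \cref{specificM_HConstruction}(b) and \cref{M_H'Construction}(c) gives
$$\ell(\alpha)-\ell(\alpha') = \sum_{i\in\mathbb N} h_i|u_i| \, .$$
Since $M_H$ is bare, $\PS(M_H)=\sum_{r\in R_H}|r|\,\kappa(M_H,r)$; and because the $u_i$ are pairwise distinct, the $v_i$ are pairwise distinct, and $u_i=v_{i'}$ forces $i=i'$ (condition (d) of \cref{quasiGeod}), every relator of $R_H$ belongs to $\{u_i,v_i\}$ for a unique index $i$. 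Reorganising the perimeter sum by index and using $|v_i|\geq|u_i|$ (condition (b) of \cref{quasiGeod}), the lemma reduces to proving $\kappa(M_H,u_i)+\kappa(M_H,v_i)\geq 2h_i$ for every $i$.

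To prove that inequality I would track the signed and unsigned $r$-face counts through the reduction performed in the proof of \cref{M_H'Construction}. That reduction consists of repeatedly (i) excising a boundary subpath labelled $u_i$ via \cref{pathExcise} --- which creates one $u_i$-face, lowers $\sigma(\cdot,u_i)$ by $1$, and leaves every other signed $r$-face count unchanged --- and (ii) deleting a canceling pair of $u_i$-faces via \cref{diskDelete} --- which removes two $u_i$-faces and, since canceling faces have opposite signed contributions, preserves every signed $r$-face count. Hence the passage from $M_H$ to $M_H'$ creates no surviving face, so $\kappa(M_H,v_i)\geq\kappa(M_H',v_i)$; and it decreases $\sigma(\cdot,u_i)$ by $h_i$ in total while leaving $\sigma(\cdot,v_i)$ fixed, so by \cref{specificM_HConstruction}(c)
$$\sigma(M_H',u_i)+\sigma(M_H',v_i) \equiv \sigma(M_H,u_i)+\sigma(M_H,v_i)-h_i \equiv -h_i \pmod{\ell_i} \, .$$
Now $0\leq h_i\leq\ell_i/2$ by \cref{M_H'Construction}(e) together with the hypothesis $|k_i|\leq\ell_i/2$, so any integer $\equiv -h_i\pmod{\ell_i}$ has absolute value at least $h_i$; therefore $\kappa(M_H',u_i)+\kappa(M_H',v_i)\geq|\sigma(M_H',u_i)|+|\sigma(M_H',v_i)|\geq|\sigma(M_H',u_i)+\sigma(M_H',v_i)|\geq h_i$. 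Combining this with $\kappa(M_H,u_i)=\kappa(M_H',u_i)+h_i$ (\cref{M_H'Construction}(b)) and $\kappa(M_H,v_i)\geq\kappa(M_H',v_i)$ gives $\kappa(M_H,u_i)+\kappa(M_H,v_i)\geq 2h_i$, as required. The degenerate case $u_i=v_i$ is handled the same way, using part (d) of \cref{specificM_HConstruction} to get $\sigma(M_H',u_i)\equiv -h_i$ and hence $\kappa(M_H,u_i)\geq 2h_i$ directly.

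The main obstacle is the factor of $2$. The naive estimate $\kappa(M_H,u_i)\geq\kappa(M_H',u_i)+h_i\geq h_i$ alone only yields $\PS(M_H)\geq\ell(\alpha)-\ell(\alpha')$, which would be too weak for the final quasigeodesic bound; recovering the extra $h_i$ is precisely what forces the careful propagation of signed face counts through the excision procedure of \cref{M_H'Construction}, together with the size constraint $h_i\leq\ell_i/2$, which is what prevents the congruence $\sigma(M_H',u_i)+\sigma(M_H',v_i)\equiv -h_i\pmod{\ell_i}$ from being realised by a signed count of small absolute value. A secondary point to get right is the bookkeeping showing that reorganising $\PS(M_H)=\sum_{r\in R_H}|r|\kappa(M_H,r)$ by index counts no face twice, which is exactly the role of condition (d) of \cref{quasiGeod}.
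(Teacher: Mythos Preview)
Your argument is correct and follows the same overall strategy as the paper: both reduce the lemma to establishing $\kappa(M_H,u_i)+\kappa(M_H,v_i)\geq 2h_i$ for each $i$ (using the congruence mod~$\ell_i$ from \cref{specificM_HConstruction} together with $h_i\leq\ell_i/2$), then sum using $|v_i|\geq|u_i|$.

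The one genuine difference is in how that key inequality is reached. The paper argues directly in $M_H$: from $\sigma(M_H,u_i)+\sigma(M_H,v_i)\equiv 0\pmod{\ell_i}$ and $\kappa(M_H,u_i)\geq h_i$ it concludes $\kappa(M_H,u_i)+\kappa(M_H,v_i)\geq 2h_i$. You instead route the argument through $M_H'$, tracking that each excision step lowers $\sigma(\cdot,u_i)$ by $1$ and leaves $\sigma(\cdot,v_i)$ untouched, so that $\sigma(M_H',u_i)+\sigma(M_H',v_i)\equiv -h_i\pmod{\ell_i}$; the bound $h_i\leq\ell_i/2$ then forces $\kappa(M_H',u_i)+\kappa(M_H',v_i)\geq h_i$, and adding back the $h_i$ lost $u_i$-faces gives $2h_i$. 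Your route is a bit longer but more self-contained: the paper's one-line passage from $\kappa(M_H,u_i)\geq h_i$ to $\kappa\geq 2h_i$ tacitly uses that each face removed in \cref{M_H'Construction} is a \emph{positively} oriented $u_i$-face (so in fact the count of positively oriented $u_i$-faces in $M_H$ is at least $h_i$, from which the doubling follows via $\sigma\equiv 0$), whereas your version makes the bookkeeping explicit by working with the shifted congruence in $M_H'$.
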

\begin{proof}
Let $I = \{i \in \mathbb N \mid u_i = v_i\}$.  By \cref{specificM_HConstruction}, if $i \in I$ then $\sigma(M_H, u_i) \equiv 0 \mod \ell_i$.  If $i \not \in I$, then $\sigma(M_H, u_i)+\sigma(M_H, v_i) \equiv 0 \mod \ell_i$. Note that $\kappa(M_H,u_i)+\kappa(M_H, v_i) \geq \kappa(M_H, u_i) \geq h_i$ by \cref{M_H'Construction} (b).  Since $h_i \leq \ell_i/2$, it follows that there are at least $2h_i$ faces in $M_H$ with boundary label either $u_i^{\pm 1}$ or $v_i^{\pm 1}$.  If $u_i = v_i$, this says that $\kappa(M_H, u_i) \geq 2h_i$.  If $u_i \neq v_i$, this means $\kappa(M_H, u_i)+\kappa(M_H, v_i) \geq 2h_i$.  Therefore
\begin{align*}
\PS(M_H) &= \sum_{r \in R_H}|r|\kappa(M_H, r)\\
&=\sum_{i \in I}|u_i|\kappa(M_H, u_i) + \sum_{i \not \in I}(|u_i|\kappa(M_H, u_i)+|v_i|\kappa(M_H,v_i))\\
&\geq \sum_{i \in I}|u_i|\kappa(M_H, u_i) +\sum_{i \not \in I} |u_i|(\kappa(M_H, u_i)+\kappa(M_H,v_i))\\
&\geq \sum_{i \in I} 2h_i|u_i| + \sum_{i \not \in I} 2h_i|u_i| = \sum_{i \in \mathbb N} 2h_i|u_i| = 2(\ell(\alpha)-\ell(\alpha')) \, ,
\end{align*}
where the last equality follows from \cref{M_H'Construction} (c).
\end{proof}

Now we are ready to prove \cref{quasiGeod}.
\begin{proof}[Proof of \cref{quasiGeod}]
Continuing to use the terminology and notation built up in this section, since $w$ is a geodesic representative of $u$, $|u| = \ell(\alpha)$, and $|w| = \ell(\beta)$, it suffices to prove that $\ell(\alpha) < \frac{3}{1-12\lambda}\ell(\beta)$.  By Lemmas \ref{perimeterSum}, \ref{ineq1}, and \ref{ineq2},
\begin{align*}
2(\ell(\alpha)-\ell(\alpha')) \leq \PS(M_H) &\leq \tfrac{1}{1-6\lambda}\ell(\bd M_H) = \tfrac{1}{1-6\lambda}\left(\ell(\alpha)+\ell(\beta)\right)\\
(1-12\lambda)\ell(\alpha) &\leq (2-12\lambda)\ell(\alpha')+\ell(\beta) < \ell(\alpha')+\ell(\beta) < 3\ell(\beta)\\
\ell(\alpha) &< \tfrac{3}{1-12\lambda}\ell(\beta).
\end{align*}
Therefore $u$ is $\left(\tfrac{3}{1-12\lambda}, 0\right)$-quasigeodesic, as desired.
\end{proof}

For this to be a meaningful bound we must have $0 < \lambda < \sfrac{1}{12}$, explaining our initial choice of $\lambda$.

\section{Proof of the main result}

In this section we prove the following proposition.

\begin{proposition}\label{mainProp}
Let $m,n \in \mathbb Z^+ \cup \{\infty\}$ with $m < n$.  Then there exist finitely generated, recursively presented groups $G$ and $B$ such that $B \leqslant G$ and
\begin{align*}
1 &\leq \asdim(G) \leq 2\\
m+1 &\leq \asdimAN(G) \leq m+2\\
n+1 &\leq \asdimAN(B) \leq n+2 \, .
\end{align*}
\end{proposition}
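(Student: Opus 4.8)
The plan is to realize a single abstract group $K = \bigoplus_{i \in \mathbb N} \mathbb Z_{\ell_i}$ inside two finitely generated groups, equipped with two metrics of different Assouad--Nagata dimension, and then to amalgamate those groups along $K$ so that the \emph{larger} of the two metrics is forced onto the common copy of $K$. To set things up, fix an increasing sequence $(\ell_i)$ of integers $\geq 2$ and two increasing sequences $(p_j),(q_j)$ of expansion constants, with $(p_j)$ obeying the growth hypothesis of \cref{K_nConstruction} for $m$, $(q_j)$ obeying it for $n$, and $(q_j)$ chosen to grow fast enough that $(n\times q)_i \geq (m\times p)_i$ for all $i$. By \cref{K_nConstruction} (with $d=0$), $K$ carries the norms $\|\cdot\|_{m\times p}$ and $\|\cdot\|_{n\times q}$, of Assouad--Nagata dimension $m$ and $n$ respectively; write $K_m$ and $K_n$ for the resulting metric groups. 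The identity map $K_m \to K_n$ is Lipschitz, so $K_n$ is metrically at least as large as $K_m$.

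Second, I would invoke the machinery of Section~2. Choose disjoint finite alphabets $S_A, S_B$ and small-cancellation data $u_i^A, v_i^A$ over $S_A$ and $u_i^B, v_i^B$ over $S_B$ satisfying conditions (a)--(d) of \cref{quasiGeod}, with in addition $|u_i^A| \asymp (m\times p)_i$ and $|u_i^B| \asymp (n\times q)_i$ (so $|u_i^A| \lesssim |u_i^B|$), and set
\[
A = \langle S_A \mid [s,u_i^A],\ (u_i^A)^{\ell_i},\ u_i^A(v_i^A)^{-1}\rangle, \qquad B = \langle S_B \mid [s,u_i^B],\ (u_i^B)^{\ell_i},\ u_i^B(v_i^B)^{-1}\rangle .
\]
These are finitely generated and recursively presented, the $u_i^A$ (resp.\ $u_i^B$) are central of order dividing $\ell_i$, and there are central extensions $1 \to K_A \to A \to H_A \to 1$ and $1 \to K_B \to B \to H_B \to 1$ with $K_A = \langle u_i^A\rangle$, $K_B = \langle u_i^B\rangle$, and $H_A = \langle S_A \mid u_i^A, v_i^A\rangle$, $H_B = \langle S_B \mid u_i^B, v_i^B\rangle$ cyclically reduced $C'(\sfrac{1}{12})$ groups. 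Applying \cref{quasiGeod} with $\tilde u = \varepsilon$ makes every word $\prod_i(u_i^A)^{k_i}$ with $|k_i|\leq \ell_i/2$ quasigeodesic in $A$; together with the trivial length bound this gives $K_A \cong \bigoplus_i \mathbb Z_{\ell_i}$ and shows that $K_A \hookrightarrow A$ is a quasi-isometric embedding with $K_A$ bi-Lipschitz to $K_m$, so $\asdimAN(K_A) = m$; likewise $\asdimAN(K_B) = n$. Here one must also arrange, via the explicit presentations of Section~4 (building on \cite{Sledd}), that $H_A$ and $H_B$ have asymptotic and Assouad--Nagata dimension at most $2$.

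Third, let $\phi : K_A \to K_B$ be the isomorphism $u_i^A \mapsto u_i^B$ and put $G = A *_\phi B$, so $B \leqslant G$ and $G$ is finitely generated and recursively presented. Since $K := K_A = K_B$ is central in both factors it is central in $G$, and $G/K = H_A * H_B$, giving a central extension $1 \to K \to G \to H_A*H_B \to 1$. Because $S_A \cap S_B = \emptyset$, the natural presentation of $G$ is again amenable to the argument behind \cref{quasiGeod}, with the amalgamating relators $u_i^A(u_i^B)^{-1}$ in the role of the relators $u_iv_i^{-1}$; a second run of that argument shows that $K_A \hookrightarrow G$ is still a quasi-isometric embedding (and, relatedly, that $A$ is undistorted in $G$). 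Hence the image of $K_B = K$ inside $G$ is bi-Lipschitz to $K_m$: the isomorphism $\phi$ has ``crushed'' $K_B$ from Assouad--Nagata dimension $n$ down to $m$, even though $K_B$ still lies in $B \leqslant G$ carrying its old dimension.

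Finally I would assemble the estimates. The group $G$ contains the infinite group $K$ and is finitely generated, so $\asdim(G)\geq 1$; and the Hurewicz-type theorem for asymptotic dimension applied to $1\to K\to G\to H_A*H_B\to 1$, with $\asdim(K)=0$ and the free-product formula, gives $\asdim(G)\leq \asdim(H_A*H_B)=\max\{\asdim H_A,\asdim H_B,1\}\leq 2$. Next, $B$ contains the quasi-isometrically embedded $K_n$ together with the powers of a suitable undistorted infinite-order element commuting with all $u_i^B$, hence a sequence of expanded $(n{+}1)$-cubes, so $\asdimAN(B)\geq n+1$ by \cref{cubeLemma1}; and the Hurewicz theorem for Assouad--Nagata dimension applied to $1\to K_B\to B\to H_B\to 1$ gives $\asdimAN(B)\leq \asdimAN(K_B)+\asdimAN(H_B)\leq n+2$. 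The same cube construction inside the quasi-isometrically embedded factor $A\leqslant G$ yields $\asdimAN(G)\geq \asdimAN(A)\geq m+1$, while the Hurewicz theorem for Assouad--Nagata dimension applied to $1\to K\to G\to H_A*H_B\to 1$ --- where $K$ now carries its \emph{crushed} metric, of dimension $m$ --- gives $\asdimAN(G)\leq m+\asdimAN(H_A*H_B)\leq m+2$. The main obstacle is the crushing in the third step: forcing the amalgamating isomorphism to collapse the common $K$ all the way to the metric of $K_m$ rather than to something between $K_m$ and $K_n$; this is exactly why \cref{quasiGeod} was proved for arbitrary prefixes $\tilde u$ and exponent sequences. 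A secondary difficulty --- and the reason one only gets the window ``$m{+}1$ or $m{+}2$'' (and ``$n{+}1$ or $n{+}2$'') rather than an exact value --- is arranging the $C'(\sfrac{1}{12})$ quotients $H_A, H_B$ to have asymptotic and Assouad--Nagata dimension at most $2$.
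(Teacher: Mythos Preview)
Your overall strategy is exactly the paper's: realize $K$ with two metrics inside central extensions $A,B$ of $C'(\lambda)$ groups, amalgamate along $\phi:K_A\to K_B$, use \cref{quasiGeod} to show the copy of $K$ in $G$ is crushed down to $K_m$, and then invoke the extension theorems. There are, however, two points where your implementation diverges from the paper and leaves gaps.

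First, the paper does \emph{not} introduce auxiliary words $v_i^A,v_i^B$ inside $A$ and $B$. It sets $A=\langle S_A\mid [a,u_i],\,u_i^{\ell_i}\rangle$ and $B=\langle S_B\mid [b,v_i],\,v_i^{\ell_i}\rangle$; when applying \cref{quasiGeod} to $A$ alone it takes $V=U=U_A$, so the relators $u_iv_i^{-1}$ are vacuous. The payoff is that $G=A*_\phi B$ then has presentation $\langle S\mid [s,u_i],\,u_i^{\ell_i},\,u_iv_i^{-1}\rangle$, which is \emph{literally} the presentation in the hypothesis of \cref{quasiGeod}. In your setup the amalgamated product carries the extra relators $u_i^A(v_i^A)^{-1}$ and $u_i^B(v_i^B)^{-1}$, so your ``second run of that argument'' is not an application of \cref{quasiGeod} as stated: you would need a version allowing several $V$-families, or else drop $v_i^A,v_i^B$ entirely as the paper does.

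Second, and more substantively, \cref{quasiGeod} does not show that $A$ is undistorted in $G$; it only controls words of the special form $\tilde u\prod_i u_i^{k_i}$ where $\tilde u$ satisfies the piece condition~(c) against the $u_i$. An arbitrary word over $S_A$ need not satisfy that condition, so your route to $\asdimAN(G)\geq m+1$ via ``$A$ quasi-isometrically embedded in $G$'' is unjustified. The paper avoids this entirely: it picks a generator $b\in S_B$ and applies \cref{quasiGeod} to $G$ with $\tilde u=b^h$, where condition~(c) holds trivially because $b$ and the $u_i$ lie over disjoint alphabets. This shows directly that $\langle\bar b,K\rangle\hookrightarrow G$ is bi-Lipschitz equivalent to $\mathbb Z\times K_m$, whence $\asdimAN(G)\geq\asdimAN(\mathbb Z\times K_m)=m+1$. (The paper also treats the quotient $H=\langle S\mid u_i,v_i\rangle$ as a single $C'(\lambda)$ group rather than splitting it as $H_A*H_B$; both viewpoints give $\asdimAN(H)\leq 2$.)
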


Since the proof requires many auxiliary lemmas, we again `globally fix' all notation in this section.  

Let $m$ be a fixed positive integer, and let $n \in \mathbb Z^+ \cup \{\infty\}$ with $m < n$.  Let $(\ell_i)$ be an increasing sequence of positive integers with $\ell_0 \geq 2$.  Let $S_A, S_B$ be disjoint finite sets, and let $0<\lambda<\sfrac{1}{12}$.  Suppose we have two languages
\begin{align*}
U_A = \{u_i \mid i \in \mathbb N\} \subset (S_A)_\circ^* && V_B = \{v_i \mid i \in \mathbb N\} \subset (S_B)_\circ^*
\end{align*}
satisfying all of the following conditions for all $i, i', j \in \mathbb N$.
\begin{enumerate}[label=\normalfont(\alph*)]
\item $U_A, V_B$ are cyclically minimal and cyclically reduced, and satisfy $C'(\lambda)$.
\item There exists a nonempty word $y \in (S_B)_\circ^*$ such that, for all $h \in \mathbb Z$, if $p$ is a piece of $y^h$ and $v_i$, then $|p| < \lambda |v_i|$.
\item $2 \leq |u_i| \leq |v_i|$.
\item If $u_i = u_{i'}$ or $v_i = v_{i'}$, then $i=i'$.
\item The sequence of word lengths $(|u_i|)$ is constant on blocks of the partition $\mathcal P_m$ and $(|v_i|)$ is constant on blocks of $\mathcal P_n$ (see \cref{partitions}).
\item $|u_{(j+1)m}| \geq \ell_{(j+1)m}|u_{jm}|$.  If $n \in \mathbb Z^+$ then $|v_{(j+1)n}| \geq \ell_{(j+1)n}|v_{jn}|$, and if $n=\infty$ then\\ $|v_{(j+1)^2}| \geq \ell_{(j+1)^2}|v_{j^2}|$.
\item $U_A, V_B$ are recursive.
\end{enumerate}

We construct an example of languages $U_A, V_B$ satisfying (a)-(f) in the next section, and show that they can be recursive in the process.  Assuming we already have $U_A, V_B$ satisfying (a)-(g), let $H_A, H_B$ be given by the presentations
\begin{align*}
H_A = \langle S_A \mid U_A \rangle && H_B = \langle S_B \mid V_B \rangle
\end{align*}
and let $A, B$ be central extensions of $H_A, H_B$, respectively, defined by
\begin{align*}
A &= \langle S_A \mid R_A \rangle := \langle S_A \mid [a,u_i], u_i^{\ell_i}: a \in S_A, i \in \mathbb N \rangle\\
B &= \langle S_B \mid R_B \rangle := \langle S_B \mid [b,v_i], v_i^{\ell_i}: b \in S_B, i \in \mathbb N \rangle \, .
\end{align*}
Since all elements in $R_A, R_B$ represent the trivial element in $H_A, H_B$, respectively, there are natural epimorphisms $\pi_A: A \to H_A$ and $\pi_B: B \to H_B$.  Recall that for a word $w$ in $(S_A)_\circ^*$ or $(S_B)_\circ^*$, we denote by $\bar w$ the element of $A$ or $B$, respectively, that $w$ represents.  Let 
\begin{align*}
K_A &= \Ker(\pi_A) = \langle \bar u_i : i \in \mathbb N \rangle \leqslant Z(A)\\
K_B &= \Ker(\pi_B) = \langle \bar v_i : i \in \mathbb N \rangle \leqslant Z(B)
\end{align*}
where we consider $K_A$ as a normed group, equipped with the restriction to $K_A$ of the word norm on $A$ with respect to the generating set $S_A$, which we will denote $\|\cdot\|_A$: similarly for $K_B$.

By condition (c), there exist sequences $s=(s_j), t=(t_j)$ such that $|u_i| = (m \times s)_i$ and $|v_i| = (n \times t)_i$ for all $i \in \mathbb N$.  Define normed groups $K_m$, $K_n$ similar to the normed group defined in \cref{K_nConstruction}, as follows:
\begin{align*}
K_m = \bigoplus_{i \in \mathbb N} |u_i| \mathbb Z_{\ell_i} = \bigoplus_{i \in \mathbb N} (m \times s)_i \mathbb Z_{\ell_i} && K_n = \bigoplus_{i \in \mathbb N} |v_i| \mathbb Z_{\ell_i} = \bigoplus_{i \in \mathbb N} (n \times t)_i \mathbb Z_{\ell_i} \, .
\end{align*}
Suppose that $x$ is a word over $S_A$ satisfying (b) with respect to $U_A$, except possibly the condition that $x$ not be the empty word.  Now condition (d) guarantees that $s$ and $t$ are increasing sequences of positive integers, such that for all $j \in \mathbb N$, $s_{j+1} \geq s_j\ell_{(j+1)m}$ and $t_{j+1} \geq t_j\ell_{(j+1)n}$ if $n \in \mathbb Z^+$.  Condition (e) guarantees that $s_0 \geq 2$ and $t_0 \geq 2$.  Therefore all hypotheses of Lemmas  \ref{ANLowerBound} and \ref{K_nConstruction} are satisfied, and we have
\begin{align*}
\asdimAN(|x|\mathbb Z \times K_m) =
\begin{cases}
m &\text{ if } x=\varepsilon\\
m+1 &\text{ otherwise}
\end{cases}
&& \asdimAN(|y|\mathbb Z \times K_n) = n+1 \, .
\end{align*} 
Now $K_A$ is abelian, $K_A$ satisfies $\bar u_i^{\ell_i}=1$ for all $i \in \mathbb N$, and, since $K_A$ is central in $A$, we have $\langle \bar x, K_A \rangle \cong \langle \bar x \rangle \times K_A$.  All the corresponding statements hold for $y$ and $K_B$.  Therefore there exist natural epimorphisms $\phi_A$ and $\phi_B$ defined by
\begin{align*}
\phi_A: |x|\mathbb Z \times K_m \to \langle \bar x, K_A \rangle && \phi_B: |y|\mathbb Z \times K_n \to \langle \bar y, K_B \rangle\\
(h, z) \mapsto \bar x^h \prod_{i \in \mathbb N} \bar u_i^{z_i} && (h, z) \mapsto \bar y^h \prod_{i \in \mathbb N} \bar v_i^{z_i}
\end{align*}
for all $h \in \mathbb Z$ and $z=(z_i) \in K_m$ or $K_n$.  In the case that $x = \varepsilon$ we have that $|x| = 0$ and $0 \mathbb Z = \{0\}$, so $\phi_A : K_m \to K_A$.

\begin{lemma}\label{biLipschitz}
Each of the epimorphisms $\phi_A, \phi_B$ is bi-Lipschitz, hence each is a quasi-isometry and an isomorphism.
\end{lemma}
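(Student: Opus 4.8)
The plan is to establish, for each of $\phi_A$ and $\phi_B$, both halves of the bi-Lipschitz inequality directly at the level of norms: the upper half is immediate from the definition of the word norm, and the lower half is exactly what \cref{quasiGeod} supplies. I will carry out the argument for $\phi_A$; the case of $\phi_B$ is identical after replacing $U_A, S_A, x, K_m, A$ by $V_B, S_B, y, K_n, B$ throughout (there the analogue of hypothesis (c) of \cref{quasiGeod} is the standing hypothesis (b) verbatim).

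Since $\phi_A$ is a group homomorphism we have $d(\phi_A(a),\phi_A(b)) = \|\phi_A(a^{-1}b)\|_A$ for the left-invariant metrics, so it suffices to produce a constant $C \geq 1$ with $\tfrac1C\|v\| \leq \|\phi_A(v)\|_A \leq C\|v\|$ for every $v$ in the domain $|x|\mathbb Z \times K_m$. Fix $v = (h,z)$, let $(y_i)$ be the geodesic form of $z$, and (using the remark just after the statement of \cref{quasiGeod}) assume each $y_i \geq 0$. By (\ref{geodFormNorm}) and the definition of the $\ell^1$ product norm, $\|v\| = |x|\,|h| + \sum_i |u_i|\,|y_i|$, and since the $u_i$ (and, without loss of generality, $x$) are cyclically reduced, this number is exactly the length of the word $w_v := x^h\prod_i u_i^{y_i}$. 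Because $\overline{w_v} = \phi_A(v)$, the word norm satisfies $\|\phi_A(v)\|_A \leq |w_v| = \|v\|$, which is the upper bound with constant $1$.

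For the lower bound I would apply \cref{quasiGeod} with $S = S_A$, with both languages $U$ and $V$ of that lemma taken to be $U_A$ (that is, $v_i := u_i$), with $\tilde u := x^h$, and with $k_i := y_i$. Under these choices the group ``$G$'' of \cref{quasiGeod} has defining relators $[a,u_i]$, $u_i^{\ell_i}$, and the freely trivial words $u_iu_i^{-1}$, so it is precisely $A$; hypotheses (a), (b), (d) of \cref{quasiGeod} follow immediately from the standing hypotheses (a), (c), (d), while hypothesis (c) of \cref{quasiGeod} is exactly the requirement that $x$ satisfy the standing condition (b) for $U_A$. \cref{quasiGeod} then tells us $w_v$ is $\bigl(\tfrac{3}{1-12\lambda},0\bigr)$-quasigeodesic in $A$, i.e. $\|v\| = |w_v| \leq \tfrac{3}{1-12\lambda}\,\|\overline{w_v}\|_A = \tfrac{3}{1-12\lambda}\,\|\phi_A(v)\|_A$. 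Hence $C := \tfrac{3}{1-12\lambda}$ works and $\phi_A$ is bi-Lipschitz. A bi-Lipschitz map is in particular a quasi-isometry; and a bi-Lipschitz homomorphism has trivial kernel, since any nontrivial element has positive norm and so cannot be sent to $1$. Being a surjective homomorphism with trivial kernel, $\phi_A$ is an isomorphism.

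The main obstacle is purely one of matching conventions so that \cref{quasiGeod} genuinely applies. Two points deserve care: first, that regarding $A$ as the group ``$G$'' of \cref{quasiGeod} --- at the cost of the redundant relators $u_iv_i^{-1} = u_iu_i^{-1}$ --- does no harm, which is legitimate because that lemma (and its supporting \cref{M_HConstruction}) never requires $R_G$ to be reduced; and second, that $w_v$ has length \emph{exactly} $\|v\|$ and not merely a bounded multiple of it, which is where (\ref{geodFormNorm}), the geodesic form, and cyclic reducedness of the $u_i$ (and of $x$) are all used. Everything beyond this is routine.
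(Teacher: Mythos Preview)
Your proof is correct and follows essentially the same approach as the paper's: both establish the upper bound trivially from the definition of word norm, and both obtain the lower bound by applying \cref{quasiGeod} with $U=V=U_A$ and $\tilde u = x^h$, noting that the presentation of $A$ matches that of the ``$G$'' in \cref{quasiGeod} once one tolerates the freely trivial relators $u_iu_i^{-1}$. Your additional remarks on why a bi-Lipschitz epimorphism is injective, and on the convention-matching required to invoke \cref{quasiGeod}, are accurate elaborations of points the paper leaves implicit.
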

\begin{proof}
We prove the statement for $\phi_A$.  Let $\|\cdot\|$ be the norm on $K_m$.  Let $h \in \mathbb Z$ and $z = (z_i) \in K_m$.  Let $(k_i)$ be the geodesic form of $z$ (see \cref{geodForm}).  Then
$$\|\phi_A(h, z)\|_A = \left\|\bar x^h \prod_{i \in \mathbb N} \bar u_i^{k_i}\right\|_A \leq \left| x^h \prod_{i \in \mathbb N} u_i^{k_i} \right| = h|x| + \sum_{i \in \mathbb N} |k_i||u_i| = \|(h, z)\|.$$
Now $k_i \leq \ell_i/2$ for all $i \in \mathbb N$, and $x^h$ satisfies condition (c) of \cref{quasiGeod}.  Furthermore,
$$A = \langle S_A \mid [a, u_i], u_i^{\ell_i} : a \in S_A, i \in \mathbb N \rangle = \langle S_A \mid [a, u_i], u_i^{\ell_i}, u_i(u_i)^{-1} : a \in S_A, i \in \mathbb N \rangle$$
and $U_A \cup U_A = U_A = \{u_i \mid i \in \mathbb N\}$ is a cyclically reduced, cyclically minimal $C'(\lambda)$ language, where $2 \leq |u_i| \leq |u_i|$ and $u_i = u_{i'}$ implies that $i = i'$ for all $i, i' \in \mathbb N$.  Thus we may apply \cref{quasiGeod} with $G = A, U = U_A, V = U_A,$ and $\tilde u = x^h$.  This yields
$$\|(h,z)\| = h|x| + \sum_{i \in \mathbb N}|u_i||k_i| = \left|x^h \prod_{i \in \mathbb N} u_i^{k_i} \right| \leq \left(\frac{3}{1-12\lambda}\right)\left\|\bar x^h \prod_{i \in \mathbb N} \bar u_i^{k_i}\right\|_A = \left(\tfrac{3}{1-12\lambda}\right)\|\phi_A(h,z)\|_A \, ,$$
hence $\left(\frac{1-12\lambda}{3} \right) \|(h,z)\| \leq \|\phi_A(k,z)\|_A \leq  \|(h,z)\|$ and $\phi_A$ is bi-Lipschitz.
\end{proof}

By replacing $x$ or $y$ with $\varepsilon$, we obtain the following.

\begin{corollary}\label{restrictedbiLipschitz}
Both $\phi_A|_{K_m} : K_m \to K_A$ and $\phi_B|_{K_n} : K_n \to K_B$ are bi-Lipschitz maps. Therefore $\asdimAN(K_A) = \asdimAN(K_m) = m$ and $\asdimAN(K_B) = \asdimAN(K_n) = n$.
\end{corollary}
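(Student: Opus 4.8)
The plan is to specialize \cref{biLipschitz} to the empty word and then combine it with \cref{K_nConstruction} and the bi-Lipschitz invariance of $\asdimAN$. First I would observe that the proof of \cref{biLipschitz} uses nothing about $x$ beyond the two facts that $\bar x$ centralizes $K_A$ and that $x^h$ satisfies condition (c) of \cref{quasiGeod}; both hold trivially when $x = \varepsilon$, since then $x^h = \varepsilon$ for every $h$ and $|x|\mathbb Z = \{0\}$, so $\phi_A$ is already a map $K_m \to K_A$ and coincides with $\phi_A|_{K_m}$. Running the argument of \cref{biLipschitz} verbatim with $x = \varepsilon$, and symmetrically with $y = \varepsilon$, therefore yields that $\phi_A|_{K_m} : K_m \to K_A$ and $\phi_B|_{K_n} : K_n \to K_B$ are bi-Lipschitz. (Equivalently, keeping $x$ and $y$ fixed, one restricts the bi-Lipschitz map $\phi_A$ to the isometrically embedded subgroup $\{0\}\times K_m$, whose image is $K_A$ carrying exactly the subspace norm $\|\cdot\|_A$ by which it was defined; this is what the sentence ``by replacing $x$ or $y$ with $\varepsilon$'' refers to.)

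Since $\asdimAN$ is invariant under bi-Lipschitz equivalence, it then follows immediately that $\asdimAN(K_A) = \asdimAN(K_m)$ and $\asdimAN(K_B) = \asdimAN(K_n)$, so it remains only to evaluate these. For this I would apply \cref{K_nConstruction} with $d = 0$ (so that $Z_d$ is trivial) once to $K_m$ and once to $K_n$: the required hypotheses were already verified in the paragraph preceding \cref{biLipschitz}, namely condition (d) makes $(s_j)$ and $(t_j)$ increasing sequences of positive integers, condition (e) gives $s_0, t_0 \geq 2$, and condition (f) supplies $s_{j+1} \geq \ell_{(j+1)m}s_j$ together with $t_{j+1} \geq \ell_{(j+1)n}t_j$ when $n \in \mathbb Z^+$ (and no such growth hypothesis is needed when $n = \infty$). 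This gives $\asdimAN(K_m) = m$ and $\asdimAN(K_n) = n$; the first equality is also just the $x=\varepsilon$ case of the displayed computation directly above \cref{biLipschitz}.

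I do not expect any genuine obstacle here. The one point deserving a moment's care is confirming that \cref{biLipschitz}, whose statement is phrased for a fixed (possibly empty) word $x$, really does specialize to the restricted maps $\phi_A|_{K_m}$ and $\phi_B|_{K_n}$ — which is precisely the ``replacing with $\varepsilon$'' observation above — after which everything is an immediate consequence of results already established.
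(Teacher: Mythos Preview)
Your proposal is correct and is exactly the paper's approach: the paper's proof consists solely of the sentence ``By replacing $x$ or $y$ with $\varepsilon$, we obtain the following,'' and your write-up simply unpacks that sentence together with the already-established computation $\asdimAN(K_m)=m$, $\asdimAN(K_n)=n$ from \cref{K_nConstruction} with $d=0$.
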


In order to get our bounds on $\asdimAN(G)$ and $\asdimAN(B)$, we use the extension theorems for asymptotic and Assouad-Nagata dimension.

\begin{lemma}[Extension Theorems]\cite{Bell_Dranishnikov2, Brodskiy_etal}\label{extensionThm}
Let
$$1 \to K \to G \to H \to 1$$
be a short exact sequence, where $G$ and $H$ are finitely generated groups equipped with the word norm with respect to some finite generating set, and the norm on $K$ is the restriction to $K$ of the norm on $G$.  Then 
\begin{align*}
\asdim(G) &\leq \asdim(K)+\asdim(H)\\
\asdimAN(G) &\leq \asdimAN(K) + \asdimAN(H) \, .
\end{align*}
\end{lemma}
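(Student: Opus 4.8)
The plan is to deduce both inequalities from Hurewicz-type mapping theorems applied to the quotient homomorphism $\pi \colon G \to H$, as in \cite{Bell_Dranishnikov2, Brodskiy_etal}. First I would fix a finite generating set $S$ of $G$; then $\pi(S)$ generates $H$, and with respect to the associated word norms the map $\pi$ is $1$-Lipschitz. The key structural observation is that the fibers of $\pi$ are uniformly isometric to $K$ with its induced metric: if $\pi(g) = h$ then $\pi^{-1}(h) = gK$, and since the word metric on $G$ is left-invariant, left multiplication $L_g$ is an isometry of $G$ carrying $K$ onto $gK$. Hence every fiber $(gK, d_G|_{gK})$ is isometric to the single metric space $\mathcal{K} := (K, d_G|_K)$, which is what ``$\asdim(K)$'' and ``$\asdimAN(K)$'' refer to in the statement (note $K$ itself need not be finitely generated, but $\mathcal{K}$ is a perfectly good metric space).

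Next I would analyze preimages of metric balls. For $h \in H$ and $r \geq 0$, the set $\pi^{-1}(B_H(h,r))$ is a union of $N_r := |B_H(1_H, r)|$ cosets of $K$, each isometric to $\mathcal{K}$. Applying the finite-union theorems for $\asdim$ and $\asdimAN$ (a union of finitely many subsets each of dimension $\leq d$ again has dimension $\leq d$), one obtains $\asdim(\pi^{-1}(B_H(h,r))) \leq \asdim(\mathcal{K}) = \asdim(K)$ and $\asdimAN(\pi^{-1}(B_H(h,r))) \leq \asdimAN(K)$, with control functions independent of $h$ for each fixed $r$, since the pieces are mutually isometric and there are at most $N_r$ of them. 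This is exactly the hypothesis ``$\pi$ is a Lipschitz map whose $r$-ball preimages satisfy $\asdim \leq \asdim(K)$ (resp.\ $\asdimAN \leq \asdimAN(K)$) uniformly'' required by the Hurewicz mapping theorem for $\asdim$ (Bell--Dranishnikov) and its Assouad--Nagata analogue (Brodskiy--Dydak--Levin--Mitra). Combined with $\asdim(H)$, $\asdimAN(H)$ bounding the ``base'' dimension, these theorems then give $\asdim(G) \leq \asdim(H) + \asdim(K)$ and $\asdimAN(G) \leq \asdimAN(H) + \asdimAN(K)$.

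The main obstacle is the Assouad--Nagata case. There the control functions must be (at most) linear, and they interact with scale: the Assouad--Nagata Hurewicz theorem does not demand one control function valid for all radii, but rather that the $r$-ball preimages admit control functions whose dependence on $r$ is compatible with rescaling — which is precisely what the Assouad--Nagata finite-union theorem produces, the degradation entering only through $N_r$, which may grow (even exponentially) with $r$. So the substantive work is (i) checking that the Assouad--Nagata finite-union bound for $N_r$ mutually isometric copies of $\mathcal{K}$ has the exact form the Assouad--Nagata Hurewicz theorem accepts, and (ii) confirming that $\pi$ being merely $1$-Lipschitz, rather than a quasi-isometry, suffices — which is the whole point of these mapping theorems. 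The asymptotic-dimension inequality is then the (simpler, scale-free) special case of the same argument, using only that $\pi$ is coarse and that the preimages of bounded sets are coarsely equivalent, uniformly, to $\mathcal{K}$.
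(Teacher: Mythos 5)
The paper does not prove this lemma at all: it is quoted directly from \cite{Bell_Dranishnikov2} and \cite{Brodskiy_etal}, where it is obtained from Hurewicz-type mapping theorems applied to the quotient map $\pi \colon G \to H$. Your overall plan therefore reconstructs the cited proof, and your setup is correct: $\pi$ is $1$-Lipschitz for the word norms (taking $\pi(S)$ as generating set of $H$), and by left-invariance every fiber $gK$ is isometric to $(K, d_G|_K)$, which is what $\asdim(K)$ and $\asdimAN(K)$ mean here.

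The gap is in how you verify the fiber hypothesis of the mapping theorems. You write $\pi^{-1}(B_H(h,r))$ as a union of $N_r=|B_H(1,r)|$ cosets of $K$ and invoke finite-union theorems. For $\asdim$ this is workable: for fixed $r$ the number of pieces is fixed, the resulting control function depends only on $N_r$ and the control function of $K$, hence is uniform in $h$, and the Bell--Dranishnikov theorem allows arbitrary dependence on $r$. But for $\asdimAN$ this route does not deliver the hypothesis, and you explicitly leave the decisive point (``checking that the finite-union bound has the exact form the AN Hurewicz theorem accepts'') unresolved. The union theorem degrades the dilation constant by an amount depending on the number of pieces, and $N_r$ grows exponentially in $r$; the Brodskiy--Dydak--Levin--Mitra theorem, in order to output a \emph{linear} control function for $G$, needs the control functions of the $r$-ball preimages to have dilation independent of $r$ and additive part at most linear in $r$, which constants blowing up with $N_r$ do not satisfy. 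The standard fix avoids unions entirely: if $\pi(g)=h$ and $g'\in\pi^{-1}(B_H(h,r))$, choose a word $w$ over $S$ of length at most $r$ with $\pi(\bar w)=\pi(g^{-1}g')$; then $g'\bar w^{-1}\in gK$ and $d_G(g',g'\bar w^{-1})\le r$, so the whole preimage lies at Hausdorff distance at most $r$ from the single coset $gK\cong K$. Consequently, if $D_K(s)=cs+b$ is an $n$-dimensional control function for $K$, then $s\mapsto cs+(2c+2)r+b$ controls the preimage, uniformly in $h$, with dilation independent of $r$ and additive term linear in $r$ --- exactly the input both Hurewicz theorems accept. With that replacement (which also simplifies the $\asdim$ half) your argument goes through.
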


\begin{lemma}\cite{Sledd}\label{C'(1/6)Lemma}
Let $H$ be a finitely generated $C'(\sfrac{1}{6})$ group.  Then $\asdimAN(H) \leq 2$.
\end{lemma}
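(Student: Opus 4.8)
The first point to register is that the presentation $\langle S \mid R \rangle$ defining $H$ need not be finite --- in the intended application $R$ is infinite with relator lengths tending to infinity --- so $H$ need not be hyperbolic, and one cannot simply invoke finiteness of $\asdim$ for hyperbolic groups. What survives for an arbitrary $C'(\sfrac16)$ presentation is that van Kampen diagrams are ``thin'': the Greendlinger Lemma together with \cref{perimeterSum} pin down, with \emph{linear} control, how relator faces can be assembled, so that the Cayley graph $\Gamma(H,S)$ is tree-like away from the relator cycles. The plan is to make this precise and exploit it.

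Concretely, I would (i) show that $\Gamma(H,S)$ is asymptotically tree-graded with respect to the family $\mathcal L$ of embedded relator loops $g\cdot\partial F$ ($g \in H$, $F$ a relator polygon): using the $C'(\sfrac16)$ condition one checks that two distinct loops intersect coarsely in a set whose diameter is at most a fixed proportion of the shorter loop's length, that geodesic triangles are thin outside $\mathcal L$, and that a geodesic joining two points of a common loop stays uniformly near that loop --- all immediate from the estimates collected in \cref{vKDsOverC'1-6}. Since each member of $\mathcal L$ is a metric cycle, the family $\mathcal L$ has (uniform) Assouad--Nagata dimension at most $1$. Then (ii) I would apply a Hurewicz-type bound for $\asdimAN$ of asymptotically tree-graded spaces --- of the form $\asdimAN(X) \le \max\{1,\sup_{L\in\mathcal L}\asdimAN(L)\}+1$ --- to conclude $\asdimAN(H) \le 1+1 = 2$. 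As a consistency check: when $R = \varnothing$ this gives $\asdimAN(F(S))\le 1$, and when $R$ is finite it agrees with the bound obtained from hyperbolicity and the fact that $\partial H$ has topological dimension at most $1$ (because the presentation $2$-complex is $2$-dimensional); the present route is meant to cover the infinitely presented case uniformly.

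The main obstacle is step (ii): I do not know an off-the-shelf Hurewicz theorem for $\asdimAN$ in exactly the tree-graded form required, so the honest route is probably a direct construction. For each scale $r$ one builds a cover of $\Gamma(H,S)$ by sets of diameter at most $Cr$ (with $C$ independent of $r$) that decomposes into three $r$-disjoint subfamilies: two subfamilies carry the ``tree direction'' --- a breadth-first layering of $\Gamma(H,S)$ from a basepoint, colored by level modulo a multiple of $r$ and refined within each annulus by the combinatorial-descendant relation, exactly as for a tree --- and the third subfamily absorbs those portions of relator loops that would otherwise force two same-colored annulus pieces within distance $r$ of each other. Verifying, via the thin-diagram estimates, that a \emph{single} extra color suffices, and is needed only along loops, is where all the small-cancellation input is spent; this is the technical heart, and is presumably what \cite{Sledd} carries out. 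Maintaining \emph{linear} (not merely finite) control of piece diameters throughout is exactly what upgrades the conclusion from $\asdim(H)\le 2$ to $\asdimAN(H)\le 2$.
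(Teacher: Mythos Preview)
The paper does not prove this lemma: it is stated with a citation to the companion paper \cite{Sledd} and used as a black box. There is therefore no proof in the present paper to compare your proposal against.

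As a sketch of how one might establish the result, your outline is reasonable and you have correctly isolated the key difficulties: the presentation may be infinite (so hyperbolicity is unavailable), and one needs \emph{linear} rather than merely finite control functions to upgrade from $\asdim$ to $\asdimAN$. Your two-step plan --- tree-graded structure over relator loops, then a Hurewicz-type estimate --- is plausible, and you are honest that step (ii) is the real content and that you do not have an off-the-shelf theorem. Your fallback, a direct three-color cover construction at each scale $r$, is indeed the sort of argument one expects \cite{Sledd} to carry out; but as written your proposal is an outline, not a proof, and you say so yourself. In particular, the claim that a \emph{single} extra color suffices to absorb the relator loops is exactly the point that requires work, and nothing in your sketch establishes it.
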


\begin{corollary}\label{ABBounds}
We have
\begin{align*}
1 \leq \asdim(A) &\leq 2 && 1 \leq \asdim(B) \leq 2\\
m \leq \asdimAN(A) &\leq m+2 && n+1 \leq \asdimAN(B) \leq n+2 \, .
\end{align*}
Also, if $x \neq \varepsilon$, then $\asdimAN(A) \geq m+1$.
\end{corollary}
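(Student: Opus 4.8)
The whole statement follows by combining three facts already in hand: the bi-Lipschitz identifications of \cref{biLipschitz} and \cref{restrictedbiLipschitz}; the bound $\asdimAN(H_A),\asdimAN(H_B)\le 2$, which holds because $0<\lambda<\sfrac{1}{12}<\sfrac{1}{6}$ makes $H_A$ and $H_B$ finitely generated $C'(\sfrac{1}{6})$ groups, so \cref{C'(1/6)Lemma} applies; and the extension theorems of \cref{extensionThm} applied to the two central extensions $1\to K_A\to A\to H_A\to 1$ and $1\to K_B\to B\to H_B\to 1$. Throughout I would use that both $\asdim$ and $\asdimAN$ are monotone under passage to a \emph{metric} subspace (this is exactly the situation of $K_A\leqslant A$ and $\langle\bar y,K_B\rangle\leqslant B$, since each carries the restriction of the ambient word norm, so no statement about distortion is needed), and that $\asdim(X)\le\asdimAN(X)$ always (a linear control function is a control function).

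For the upper bounds I would first record, via \cref{restrictedbiLipschitz}, that $\asdimAN(K_A)=m$ and $\asdimAN(K_B)=n$, and that $K_A,K_B$ are locally finite (direct sums of finite cyclic groups) so $\asdim(K_A)=\asdim(K_B)=0$. Feeding these into \cref{extensionThm} together with $\asdimAN(H_A),\asdimAN(H_B)\le 2$ (and hence $\asdim(H_A),\asdim(H_B)\le 2$) yields $\asdim(A)\le 0+2=2$, $\asdimAN(A)\le m+2$, and likewise $\asdim(B)\le 2$, $\asdimAN(B)\le n+2$.

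For the lower bounds I would argue as follows. Both $A$ and $B$ are finitely generated and infinite (each contains an isomorphic copy of the infinite group $K_m$, resp.\ $K_n$), hence not locally finite, so $\asdim(A),\asdim(B)\ge 1$. Since $K_A\leqslant A$ with the subspace metric has $\asdimAN=m$ by \cref{restrictedbiLipschitz}, monotonicity gives $\asdimAN(A)\ge m$, and when $x\ne\varepsilon$ the subgroup $\langle\bar x,K_A\rangle$ is, by \cref{biLipschitz}, bi-Lipschitz equivalent to $|x|\mathbb Z\times K_m$, which has $\asdimAN=m+1$ by the computation preceding \cref{biLipschitz}; hence $\asdimAN(A)\ge m+1$ in that case. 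The one step that needs an extra idea is $\asdimAN(B)\ge n+1$: the subgroup $K_B$ alone only gives $\asdimAN(B)\ge n$, so instead I would use $\langle\bar y,K_B\rangle$, which by \cref{biLipschitz} is bi-Lipschitz to $|y|\mathbb Z\times K_n$ --- of Assouad--Nagata dimension exactly $n+1$ --- the extra dimension coming from the central $\mathbb Z$-direction carried by $\bar y$; monotonicity under metric subspaces then gives $\asdimAN(B)\ge n+1$. Extracting that ``$+1$'' from $\bar y$ rather than from $K_B$ is the only place where the proof is not pure bookkeeping; everything else is a direct application of the lemmas cited above.
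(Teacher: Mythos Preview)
Your proposal is correct and follows essentially the same route as the paper: lower bounds via the bi-Lipschitz embeddings of \cref{biLipschitz} and \cref{restrictedbiLipschitz} (using $\langle \bar y, K_B\rangle$ to get the extra $+1$ for $B$, exactly as the paper does implicitly when it says ``the argument for $B$ is similar''), and upper bounds via \cref{C'(1/6)Lemma} plus the extension theorems applied to $1\to K_A\to A\to H_A\to 1$ and its $B$-analogue. The only cosmetic difference is that you spell out the monotonicity-under-metric-subspaces step and the ``finitely generated infinite $\Rightarrow$ not locally finite $\Rightarrow \asdim\ge 1$'' step more explicitly than the paper does.
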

\begin{proof}
We establish the bounds for $A$: the argument for $B$ is similar.  Since $A$ is finitely generated and infinite, $\asdimAN(A) \geq 1$.  By \cref{restrictedbiLipschitz}, $\asdimAN(A) \geq \asdimAN(K_A) = m$.  If $x \neq \varepsilon$,
$$\asdimAN(A) \geq \asdimAN(\langle \bar x, K_A \rangle) = \asdimAN(|x|\mathbb Z \times K_m) = m+1$$
since $|x| > 0$.  This gives the lower bounds on the asymptotic and Assouad-Nagata dimension of $A$.  For the upper bounds, note that $A$ is constructed so that there is a short exact sequence
$$1 \to K_A \to A \to H_A \to 1$$
where $H_A$ is a finitely generated $C'(\sfrac{1}{6})$ group and hence $\asdim(H_A) \leq \asdimAN(H_A) \leq 2$.  Since $K_A$ is locally finite, $\asdim(K_A)= 0$.  Now by \cref{extensionThm},
\begin{align*}
\asdim(A) &\leq \asdim(K_A)+\asdim(H_A) \leq 2\\
\asdimAN(A) &\leq \asdimAN(K_A)+\asdimAN(H_A) \leq m+2 \, .
\end{align*}
\end{proof}
By \cref{restrictedbiLipschitz}, the maps $\phi_A|_{K_m} : K_m \to K_A$ and $\phi_B|_{K_n}: K_n \to K_B$ are isomorphisms.  Therefore the map defined by $\bar u_i \mapsto \bar v_i$ for all $i \in \mathbb N$ extends to an isomorphism from $K_A$ to $K_B$.  Let $\phi: K_A \to K_B$ be this isomorphism. Let
$$G = A *_\phi B := \langle A \sqcup B \mid a\phi(a)^{-1}: a \in A \rangle \, .$$
Let $S = S_A \sqcup S_B$.  Then $G$ admits the presentation
$$G = \langle S \mid R_G \rangle := \langle S \mid [s, u_i], u_i^{\ell_i}, u_iv_i^{-1} : s \in S, i \in \mathbb N \rangle \, ,$$
which is recursive if $U_A$ and $V_B$ are recursive.  Let 
$$H = \langle S \mid R_H \rangle := \langle S_A \sqcup S_B \mid U_A \sqcup V_B \rangle = \langle S_A \sqcup S_B \mid u_i, v_i: i \in \mathbb N \rangle \, .$$
Since $U_A$ and $V_B$ are $C'(\lambda)$ languages over disjoint alphabets, $H$ is a $C'(\lambda)$ group.  Furthermore, all words in $R_G$ represent the trivial element of $H$, so there is a natural epimorphism $\pi : G \to H$.  Let $K = \Ker(\pi)$.  Then
$$K = \langle \bar u_i : i \in \mathbb N \rangle \leqslant Z(G) \, .$$
We consider $K$ as a normed group, where the norm on $K$ is the restriction to $K$ of the word norm on $G$ with respect to $S$.  Thus we have a short exact sequence
$$1 \to K \to G \to H \to 1 \, .$$
Let $b \in S_B$.  Considering the relations of $K$ and the fact that $K$ is central in $G$, there exists a natural epimorphism $\phi_K: \mathbb Z \times K_m \to \langle \bar b, K \rangle$ given by
\begin{align*}
\phi_K (h,z) = \bar b^h \prod_{i \in \mathbb N} \bar u_i^{z_i}
\end{align*}
for all $h \in \mathbb Z$ and $z = (z_i) \in K_m$.  Now we have the following.

\begin{lemma}\label{GbiLipschitz}
The epimorphism $\phi_K$ is bi-Lipschitz, in particular $\phi_K$ is a quasi-isometry and an isomorphism.
\end{lemma}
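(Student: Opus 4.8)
The plan is to repeat verbatim the argument used for \cref{biLipschitz}, this time applying \cref{quasiGeod} with $G$ in place of $A$. The group $G = A *_\phi B$ was set up precisely so that it carries the presentation $\langle S \mid [s,u_i],\, u_i^{\ell_i},\, u_iv_i^{-1} : s \in S,\, i \in \mathbb N\rangle$, which is exactly the shape required by \cref{quasiGeod}, with $U = U_A$, $V = V_B$, $S = S_A \sqcup S_B$, and $\tilde u = b^h$. In fact this case is a touch more natural than that of \cref{biLipschitz}, since the relators $u_iv_i^{-1}$ are already present, so $V$ is genuinely $V_B$ rather than an artificial second copy of $U_A$.

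The easy half of the bi-Lipschitz estimate is direct. Given $(h,z) \in \mathbb Z \times K_m$, let $(k_i)$ be the geodesic form of $z$; then $b^h \prod_{i} u_i^{k_i}$ is a word over $S$ representing $\phi_K(h,z)$, so by \eqref{geodFormNorm} and the fact that $b$ has word length $1$ in $G$ we get $\|\phi_K(h,z)\|_G \le \bigl| b^h \prod_i u_i^{k_i} \bigr| = |h| + \sum_i |k_i|\,|u_i| = \|(h,z)\|$.

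For the reverse inequality I would verify the hypotheses of \cref{quasiGeod} for the language $U_A \sqcup V_B$ and the word $\tilde u = b^h$. Hypothesis (a) of \cref{quasiGeod} follows from the standing hypothesis (a) on $U_A$ and $V_B$ together with the disjointness of $S_A$ and $S_B$: any piece of a word over $S_A$ and a word over $S_B$ is empty, so the $C'(\lambda)$, cyclically reduced, and cyclically minimal properties pass from $U_A$ and $V_B$ to their disjoint union (cyclic minimality of the union also using the standing hypothesis (d)). Hypothesis (b) of \cref{quasiGeod} is the standing hypothesis (c). Hypothesis (c) of \cref{quasiGeod} splits in two: a piece of $b^h$ with some $u_i$ is empty because $b \in S_B$ while $u_i$ is a word over $S_A$, and a piece of $b^h$ with some $v_i$ is short by the standing hypothesis (b). Hypothesis (d) of \cref{quasiGeod} follows from the standing hypothesis (d) together with disjointness of the alphabets, which rules out $u_i = v_{i'}$; and $(k_i)$ is a sequence of integers with $|k_i| \le \ell_i/2$ and finite support, being a geodesic form. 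So \cref{quasiGeod} applies to $u = b^h \prod_i u_i^{k_i}$ and yields $\|(h,z)\| = |u| \le \tfrac{3}{1-12\lambda}\,\|\bar u\|_G = \tfrac{3}{1-12\lambda}\,\|\phi_K(h,z)\|_G$. Combining the two estimates, $\tfrac{1-12\lambda}{3}\|(h,z)\| \le \|\phi_K(h,z)\|_G \le \|(h,z)\|$, so $\phi_K$ is bi-Lipschitz; in particular it is injective, hence an isomorphism onto $\langle \bar b, K\rangle$ and a quasi-isometry onto its image.

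The only point that needs genuine attention --- and the one I expect to be the main obstacle --- is confirming hypothesis (a) of \cref{quasiGeod} for the amalgamated language $U_A \sqcup V_B$: one must be certain that no relator coming from $U_A$ overlaps a relator coming from $V_B$, which is exactly where $S_A \cap S_B = \emptyset$ does the work. Everything else is a transcription of the proof of \cref{biLipschitz}, with $x^h$ replaced by $b^h$.
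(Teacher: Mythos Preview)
Your argument mirrors the paper's exactly: both apply \cref{quasiGeod} with $U = U_A$, $V = V_B$, and $\tilde u = b^h$, with you being more explicit about verifying the hypotheses than the paper's one-line ``similar to \cref{biLipschitz}''. One small caveat: standing hypothesis (b) guarantees short pieces only for powers of the specific word $y$, not for an arbitrary $b \in S_B$, so your invocation of it for the $v_i$-half of condition (c) of \cref{quasiGeod} is not quite justified as written; the paper's own proof simply does not address that half at all, so this imprecision is shared rather than a flaw peculiar to your approach, and the clean fix is to define $\phi_K$ via $y$ rather than an arbitrary $b$.
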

\begin{proof}
The proof is similar to that of \cref{biLipschitz}.  The only difference is that now we apply \cref{quasiGeod} with $U=U_A, V=V_B,$ and $\tilde u = b^h$.  Since $b$ is a word over an alphabet disjoint from $S_A$, clearly condition (c) of \cref{quasiGeod} is satisfied with $\tilde u = b^h$ for any $h \in \mathbb N$.  Since $2 \leq |u_i| \leq |v_i|$ and $u_i \neq v_{i'}$ for all $i, i' \in \mathbb N$, all hypotheses of \cref{quasiGeod} are satisfied.
\end{proof}

We are now ready to prove \cref{mainProp}.

\begin{proof}[Proof of \cref{mainProp}]
Let $B,G$ be defined as in this section.  The bounds on $\asdimAN(B)$ are established in \cref{ABBounds}.  Since $G$ is finitely generated and infinite, $\asdim(G) \geq 1$.  For the lower bound on the Assouad-Nagata dimension of $G$, note that
$$\asdimAN(G) \geq \asdimAN(\langle \bar b, K \rangle) = \asdimAN(\mathbb Z \times K_m) = m+1 \, .$$
By \cref{C'(1/6)Lemma}, we have $\asdim(H) \leq \asdimAN(H) \leq 2$.  Applying the extension theorems to the short exact sequence $1 \to K \to G \to H \to 1$ yields that $\asdim(G) \leq 2$ and $\asdimAN(G) \leq m+2$.
\end{proof}

We give a presentation of a group $G$ satisfying the conditions of \cref{mainProp} in the next section.  For now, we derive the main result of this paper as a corollary.  To do this, we will need to recall two theorems of asymptotic dimension theory.  The first a theorem of Dranishnikov and Smith, known as the Morita theorem for asymptotic Assouad-Nagata dimension.  We state a special case of it here.

\begin{theorem}[Morita theorem for $\asdimAN$]\cite{Dranishnikov_Smith}
Let $G$ be a finitely generated group.  Then $\asdimAN(G \times \mathbb Z) = \asdimAN(G)+1$.
\end{theorem}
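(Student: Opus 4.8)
I would prove the two inequalities $\asdimAN(G \times \mathbb{Z}) \le \asdimAN(G) + 1$ and $\asdimAN(G \times \mathbb{Z}) \ge \asdimAN(G) + 1$ separately; the first is routine and the second carries all the content. For the upper bound, apply \cref{subadditivity} to the factors $G$ and $\mathbb{Z}$ and use $\asdimAN(\mathbb{Z}) = 1$: here $\asdimAN(\mathbb{Z}) \ge 1$ because $\mathbb{Z}$ is an infinite finitely generated group, while $\asdimAN(\mathbb{Z}) \le \asdimAN(\mathbb{R}) = 1$ because $\mathbb{Z}$ sits isometrically inside $\mathbb{R}$ and Assouad-Nagata dimension is monotone under passing to subspaces. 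If $\asdimAN(G) = \infty$ there is nothing left to show, since $G \times \{0\}$ (with the $\ell^1$ product metric) is an isometric copy of $G$ in $G \times \mathbb{Z}$, whence $\asdimAN(G \times \mathbb{Z}) \ge \asdimAN(G) = \infty$; so assume $\asdimAN(G) = n < \infty$.

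For the lower bound I would argue contrapositively: assuming a linear control function witnessing $\asdimAN(G \times \mathbb{Z}) \le n$, I would manufacture a linear control function witnessing $\asdimAN(G) \le n-1$, contradicting $\asdimAN(G)=n$. Unwinding the definition, fix $C$ such that for every scale $r>0$ the space $G \times \mathbb{Z}$ has a cover $\mathcal{V}^r = \mathcal{V}^r_0 \sqcup \cdots \sqcup \mathcal{V}^r_n$ in which each color class $\mathcal{V}^r_i$ is $r$-disjoint (distinct members more than $r$ apart) and every member has diameter at most $Cr$. The geometric input is the one-dimensionality of the $\mathbb{Z}$-factor: cut $\mathbb{Z}$ into consecutive blocks $I_k = [kN,(k+1)N) \cap \mathbb{Z}$ with $N$ chosen much larger than $Cr$, so that — since every member of $\mathcal{V}^r$ has $\mathbb{Z}$-projection of diameter less than $N$ — no member meets two non-adjacent blocks, i.e. far-apart blocks are mutually ``insulated.'' Using this insulation one pushes the covering data down to a single slice $G \times \{z_0\}$ with only $n$ colors, via a block-recoloring scheme: the block-adjacency pattern of $\mathbb{Z}$ is a path, hence $2$-colorable, so one can recolor block by block so that the $(n+1)$st color is everywhere absorbed into the first $n$, while the insulation between far-apart blocks keeps the resulting $n$ families disjoint at a scale comparable to $N$. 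Tuning $N$, $r$, and the target scale $t$ on $G$ proportionally turns this into an $n$-color, $t$-disjoint, $C't$-bounded cover of $G$ with $C'$ independent of $t$, that is, $\asdimAN(G) \le n-1$.

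\textbf{Main obstacle.}
The step I expect to be hard is exactly this ``absorb one color using the $\mathbb{Z}$-direction'' maneuver: one must check simultaneously that the recolored family still covers the slice and is still disjoint at the claimed rescaled scale, and keep every constant uniform in $t$. This is the genuinely non-formal heart of a Morita-type theorem — the analogous ``slice off one $\mathbb{R}$-direction, lose one dimension'' statement fails for covering dimension of general normal spaces — and it is precisely what the cited result of Dranishnikov and Smith supplies. A more robust but heavier alternative is to run the argument through the characterization of Assouad-Nagata dimension by linearly controlled extension of Lipschitz maps into $n$-dimensional complexes: a dimension-$n$ obstruction in $G$ yields, via a product-with-interval (mapping cylinder, or suspension) construction, a dimension-$(n+1)$ obstruction that survives in $G \times \mathbb{R}$, which is coarsely equivalent to $G \times \mathbb{Z}$, contradicting $\asdimAN(G \times \mathbb{Z}) \le n$.

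Finally, I would note that for every group to which this theorem is actually applied in deriving \cref{IntroThm1} from \cref{introProp}, the lower bound needs none of the above: the Assouad-Nagata dimension of those groups is already witnessed by sequences of expanded cubes through \cref{cubeLemma1}. If $G$ contains expanded $n$-cubes $s_j\{0,\dots,k_j\}^n$ with $k_j \to \infty$, then — after replacing the expansion constants by integers as in \cref{WLOGIntegers} and placing an arithmetic progression of matching spacing inside $\mathbb{Z}$ — the corresponding product subspaces of $G \times \mathbb{Z}$ contain expanded $(n+1)$-cubes with side lengths still tending to infinity, so \cref{cubeLemma1} gives $\asdimAN(G \times \mathbb{Z}) \ge n+1$ directly. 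The general statement, valid for arbitrary finitely generated $G$, still requires the slicing argument sketched above.
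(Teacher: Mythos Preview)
The paper does not prove this theorem at all: it is quoted verbatim as a black-box result from Dranishnikov--Smith \cite{Dranishnikov_Smith}, with no argument supplied. So there is no ``paper's own proof'' to compare your proposal against.

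That said, your sketch is a reasonable outline of the Dranishnikov--Smith argument. The upper bound via \cref{subadditivity} is immediate, and your description of the lower bound --- slice $\mathbb{Z}$ into long blocks, use $2$-colorability of the block-adjacency path to absorb one color, and track constants --- captures the shape of the proof, though the details of keeping the recoloring both disjoint and covering at a linearly controlled scale are nontrivial and you have not actually carried them out. Your alternative route through Lipschitz extension characterizations is also in the spirit of the original paper.

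Your final observation is the most useful one for \emph{this} paper: for every group to which the Morita theorem is applied in deriving \cref{IntroThm1} from \cref{introProp}, the lower bound on $\asdimAN$ is already witnessed by sequences of expanded cubes via \cref{cubeLemma1}, and taking the product with $\mathbb{Z}$ manifestly raises the cube dimension by one. So in principle the paper could avoid invoking the full Morita theorem, at the cost of tracking the cube witnesses through the construction. The author has chosen instead to cite the general result and move on.
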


The second is the free product formulas for asymptotic and Assouad-Nagata dimension.  The theorem for $\asdim$ is due to Dranishnikov, and its counterpart for $\asdimAN$ is due to Brodskiy and Higes.

\begin{theorem}\cite{Dranishnikov, Brodskiy_Higes}
Let $A$ and $B$ be finitely generated groups.  Then
\begin{align*}
\asdim(A*B) &= \max\{\asdim(A), \asdim(B), 1\}\\
\asdimAN(A*B) &= \max \{\asdimAN(A), \asdimAN(B), 1\}
\end{align*}
\end{theorem}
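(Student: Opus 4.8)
The plan, following the cited work of Dranishnikov (for $\asdim$) and of Brodskiy--Higes (for $\asdimAN$), is to prove both formulas in the same way: establish the $\geq$ direction from the embedded factors together with a free subgroup, and the $\leq$ direction from the action of $A*B$ on its Bass--Serre tree. Throughout I may assume $A$ and $B$ are both nontrivial (otherwise $A*B$ is one of the factors and the ``$1$'' in the maximum is vacuous or handled separately), and I may assume the relevant dimensions of $A$ and $B$ are finite, since there is nothing to prove when one is infinite.

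For the lower bound, I would first note that for the word metrics coming from finite generating sets $S_A \subseteq S_A \sqcup S_B$ and $S_B \subseteq S_A \sqcup S_B$, the inclusions $A \hookrightarrow A*B$ and $B \hookrightarrow A*B$ are \emph{isometric} embeddings: in the free-product normal form, an element $a \in A$ is a single $A$-syllable, and the retraction $A*B \to A$ that kills $B$ is $1$-Lipschitz and fixes $A$, so $\|a\|_{A*B} = \|a\|_A$, and likewise for $B$. Since $\asdim$ is a coarse invariant and $\asdimAN$ does not increase when passing to a subspace, this gives $\asdim(A*B) \geq \max\{\asdim A, \asdim B\}$ and $\asdimAN(A*B) \geq \max\{\asdimAN A, \asdimAN B\}$. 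Then I would pick nontrivial $a \in A$, $b \in B$; by the subgroup theorem for free products, $\langle a,b\rangle$ is a free product of nontrivial cyclic groups, hence an infinite, finitely generated virtually free group, and is quasi-isometrically (indeed bi-Lipschitz) embedded in $A*B$. Such a group has $\asdim = \asdimAN = 1$, so both left-hand sides are also $\geq 1$, yielding the full lower bound $\geq \max\{\,\cdot\,,\,\cdot\,,1\}$.

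For the upper bound, $G = A*B$ acts on the Bass--Serre tree $T$ of the splitting: $T$ has vertex set $(G/A)\sqcup(G/B)$, edge set $G$, \emph{trivial} edge stabilizers, and vertex stabilizers the conjugates of $A$ and $B$; $\mathrm{Cay}(G, S_A\sqcup S_B)$ is quasi-isometric to the associated tree of spaces, built from $T$ by attaching a copy of $\mathrm{Cay}(A)$ or $\mathrm{Cay}(B)$ at each vertex along bounded bridges. I would then invoke the graph-of-groups (Hurewicz-type) inequalities in the special case of a trivial edge group: Bell--Dranishnikov's bound $\asdim(A*_C B) \leq \max\{\asdim A, \asdim B, \asdim C + 1\}$ and its linear-control analogue of Brodskiy--Higes for $\asdimAN$, applied with $C = \{1\}$, so that the edge-group term $\asdim C + 1$ (resp.\ $\asdimAN C + 1$) equals $1$. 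This gives $\asdim(A*B)\leq\max\{\asdim A,\asdim B,1\}$ and $\asdimAN(A*B)\leq\max\{\asdimAN A,\asdimAN B,1\}$, matching the lower bounds.

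I expect the genuinely hard part to be the upper bound, specifically the fact that the ``tree direction'' contributes additively to the \emph{trivial edge group} rather than stacking on top of the vertex dimensions: at each scale one uses that $T$ has asymptotic (and Assouad--Nagata) dimension $1$ with \emph{linear} control to produce a $2$-coloured, uniformly bounded family of subtrees, pulls this back to partition the cosets of $A$ and $B$ into two classes of well-separated ``clumps'', and then refines each clump using an $(n+1)$-coloured, uniformly bounded cover of each individual coset supplied by $\asdim A, \asdim B \leq n$ (resp.\ the bound on $\asdimAN$), taking care that two cosets sharing a common vertex are covered compatibly. In the Assouad--Nagata setting the extra labour is pure bookkeeping of control functions: one must check that the composite of the linear control function of $T$ with the control functions of the vertex groups is again an admissible control function for $A*B$ (and linear when both vertex groups have linear control), so that no dimension is lost or gained. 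Finally I would dispose of the degenerate cases directly --- one factor trivial, and $A \cong B \cong \mathbb Z_2$ where $A*B \cong D_\infty$ is virtually $\mathbb Z$ with $\asdim = \asdimAN = 1$.
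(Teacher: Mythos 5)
This theorem is quoted in the paper as a black-box citation (to Dranishnikov for $\asdim$ and Brodskiy--Higes for $\asdimAN$); the paper contains no proof of it, so there is no internal argument to compare yours against. Your outline does follow the strategy of the cited sources: factors embed isometrically (the retraction $A*B \to A$ killing $B$ is $1$-Lipschitz, so $\|a\|_{A*B}=\|a\|_A$), giving the lower bound $\max\{\cdot,\cdot\}$, and the upper bound comes from the action on the Bass--Serre tree with trivial edge stabilizers, via the Hurewicz-type amalgamated-product inequality of Bell--Dranishnikov in the $\asdim$ case and the tree-graded-space result of Brodskiy--Higes in the $\asdimAN$ case; of course, in deferring to those inequalities you are citing essentially the same results whose proofs constitute the real content, so your sketch is an outline of the literature rather than an independent argument, which is fair for a quoted theorem.

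One genuine misstatement to fix: for nontrivial $a\in A$, $b\in B$, the subgroup $\langle a,b\rangle\cong\langle a\rangle *\langle b\rangle$ is \emph{not} in general quasi-isometrically (let alone bi-Lipschitz) embedded in $A*B$ --- if $\langle a\rangle$ is distorted in $A$ (e.g.\ $A=BS(1,2)$ with $a$ the distorted generator), then $\langle a,b\rangle$ is distorted in $A*B$. This does not damage the lower bound, but the repair should go through coarse invariance rather than quasi-isometry: $A*B$ is itself infinite and finitely generated, hence not locally finite, so $\asdim(A*B)\geq 1$ directly (or use subgroup monotonicity of $\asdim$ for countable groups), and then $\asdimAN(A*B)\geq\asdim(A*B)\geq 1$. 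Note that you cannot argue the $\asdimAN$ lower bound through a distorted subgroup, since subgroup monotonicity of $\asdimAN$ is exactly what this paper shows can fail; the inequality $\asdimAN\geq\asdim$ is the safe route.
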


We are now ready to prove the main theorem.

\begin{theorem}
For all $k,m,n \in \mathbb N \cup \{\infty\}$ with $4 \leq k \leq m \leq n$, there exist finitely generated, recursively presented groups $G$ and $H$ with $H \leqslant G$, such that
\begin{align*}
\asdim(G) &= k\\
\asdimAN(G) &= m\\
\asdimAN(H) &= n \, .
\end{align*}
\end{theorem}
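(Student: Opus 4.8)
The plan is to bootstrap the Theorem out of \cref{mainProp} using only dimension-\emph{raising} moves whose effect is computable from \cref{subadditivity}, the Morita theorem for $\asdimAN$, and the free-product formulas. To push $\asdim$ up to $k$ I would take a free product with $\mathbb Z^k$: since every group produced by \cref{mainProp} has $\asdim\le 2<4\le k$, the free-product formula together with $\asdim(G'\ast\mathbb Z^k)\ge\asdim(\mathbb Z^k)=k$ forces $\asdim$ to be exactly $k$ — note that this step needs no Morita theorem for $\asdim$, only \cref{subadditivity} and the value $\asdim(\mathbb Z^k)=k$. To push $\asdimAN$ up I would take direct products with copies of $\mathbb Z$ (each raising $\asdimAN$ by $1$ by the Morita theorem for $\asdimAN$) before doing the free product, after which $\asdimAN$ is read off as a maximum from the free-product formula.

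I would first dispose of the cases where the targets coincide or are infinite. If $m=n=\infty$ and $k$ is finite, take $G=(\mathbb Z_2\wr\mathbb Z^2)\ast\mathbb Z^k$ and $H=\mathbb Z_2\wr\mathbb Z^2$: by \cite{Brodskiy_Dydak_Lang} the group $\mathbb Z_2\wr\mathbb Z^2$ is finitely generated with $\asdim=2$ and $\asdimAN=\infty$, so the free-product formulas give $\asdim(G)=\max\{2,k\}=k$ and $\asdimAN(G)=\asdimAN(H)=\infty$. If $k=m=n=\infty$, take $G=H$ to be any finitely generated group of infinite asymptotic dimension, e.g. $\mathbb Z\wr\mathbb Z$. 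If $m=n$ is finite it suffices to find a finitely generated $G$ with $\asdim(G)=k$, $\asdimAN(G)=m$ and then put $H:=G$; and if moreover $k=m=n$ one may simply take $G=H=\mathbb Z^k$. Thus the substantive cases are $4\le k\le m\le n$ with $k<\max\{m,n\}$.

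For these I would apply \cref{mainProp} with parameters $\mu:=m-2$ and $\nu:=n-2$ (interpreting $\nu=\infty$ when $n=\infty$); these satisfy $1\le\mu<\nu$ since $m\ge 4$ and $m<n$. This yields finitely generated, recursively presented groups $G_0\supseteq B_0$ with $\asdim(G_0)\le 2$, $\asdimAN(G_0)\in\{m-1,m\}$, and $\asdimAN(B_0)\in\{n-1,n\}$. I would then set
\[
G:=(G_0\times\mathbb Z^{a})\ast\mathbb Z^{k},\qquad H:=B_0\times\mathbb Z^{b}\le G_0\times\mathbb Z^{b}\le G_0\times\mathbb Z^{a}\le G,
\]
for suitable $0\le b\le a\le 2$. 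By \cref{subadditivity}, $\asdim(G_0\times\mathbb Z^{a})\le 2+a\le 4\le k$, so the free-product formula gives $\asdim(G)=k$; by the Morita theorem for $\asdimAN$ and the free-product formulas, $\asdimAN(G)=\max\{\asdimAN(G_0)+a,\,k\}$ and $\asdimAN(H)=\asdimAN(B_0)+b$. Taking $a=m-\asdimAN(G_0)\in\{0,1\}$ and $b=n-\asdimAN(B_0)\in\{0,1\}$ makes $\asdimAN(G)=m$ (using $k\le m$) and $\asdimAN(H)=n$, and all of $G,H$ are finitely generated and recursively presented with $H\le G$.

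The main obstacle is exactly that \cref{mainProp} pins down $\asdimAN(G_0)$ and $\asdimAN(B_0)$ only to within an additive $1$, and neither direct product with $\mathbb Z$ (which can only raise $\asdimAN$) nor free product (which takes maxima) can absorb that uncertainty — in particular the correct shifts $a,b$ above are not known a priori, and the awkward configuration $\asdimAN(G_0)=m$, $\asdimAN(B_0)=n-1$ would force $a=0<1=b$. So the construction genuinely produces a short list of candidate pairs $(G,H)$ — indexed by the admissible choices of $(a,b)$ and the variant $H:=B_0\le G_0\le G$ — and the proof is completed by a finite case-check showing that, whatever values $\asdimAN(G_0)$ and $\asdimAN(B_0)$ actually take in their ranges, at least one candidate realizes all three equalities at once, so that the required groups exist. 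Getting this bookkeeping right (and, with a little more care, trimming the list down to two groups) is the only delicate step; the dimension computations for each candidate are immediate from \cref{subadditivity}, the Morita theorem for $\asdimAN$, the free-product formulas, and the values $\asdim(\mathbb Z^k)=\asdimAN(\mathbb Z^k)=k$.
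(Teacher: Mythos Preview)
Your overall strategy is the same as the paper's, but there is a genuine gap in the bookkeeping you flag as ``awkward,'' and the case split you propose does not close it.

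With your shift $\mu=m-2$, \cref{mainProp} gives $\asdimAN(G_0)\in\{m-1,m\}$ and $\asdimAN(B_0)\in\{n-1,n\}$, so the required corrections are $a\in\{0,1\}$ and $b\in\{0,1\}$. In the realization $\asdimAN(G_0)=m$, $\asdimAN(B_0)=n-1$ you must take $a=0$ and $b=1$. Now run through your whole candidate list: any choice with $a\ge 1$ gives $\asdimAN(G_0\times\mathbb Z^a)\ge m+1$ by Morita, hence $\asdimAN(G)=\max\{m+1,k\}=m+1\neq m$; the only choice with $a=0$ is $G=G_0*\mathbb Z^k$, and then every subgroup of $G$ you have access to via the construction is contained in $G_0$ or in $\mathbb Z^k$, where the best you can do is $\asdimAN(B_0)=n-1$ or $\asdimAN(\mathbb Z^k)=k\le m<n$. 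There is no candidate that hits all three targets, so the ``finite case-check'' fails for this realization. (A related casualty: your main argument requires $\mu<\nu$, i.e.\ $m<n$, so the finite case $k<m=n$ is left unhandled; you note it ``suffices to find'' such a $G$ but do not produce one.)

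The paper fixes both issues with a single asymmetric shift: apply \cref{mainProp} with parameters $m-3$ and $n-2$. Then $\asdimAN(G_0)\in\{m-2,m-1\}$ forces $a\in\{1,2\}$, so one always has $a\ge 1\ge b$ and hence $B_0\times\mathbb Z^b\leq G_0\times\mathbb Z^a$; and since $m-3<n-2$ whenever $m\le n$, the diagonal $m=n$ is covered by the same argument. The rest of your outline (free product with $\mathbb Z^k$ to pin $\asdim$, Morita to adjust $\asdimAN$) is exactly what the paper does.
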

\begin{proof}
Applying \cref{mainProp} with $m-3$ and $n-2$, there exist finitely generated, recursively presented groups $G_0$ and $B_0$ with $B_0 \leqslant G_0$, such that
\begin{align*}
1 &\leq \asdim(G_0) \leq 2\\
m-2 &\leq \asdimAN(G_0) \leq  m-1\\
n-1 &\leq \asdimAN(B_0) \leq n \, .
\end{align*}

Let
$$G_1 =
\begin{cases}
G_0 \times \mathbb Z^2 &\text{ if } \asdimAN(G_0) = m-2\\
G_0 \times \mathbb Z &\text{ if } \asdimAN(G_0) = m-1 \, .
\end{cases}$$
Then by the Morita theorem for Assouad-Nagata dimension, we have $\asdim(G_1) = m$.  By the extension theorem for asymptotic dimension, we have that $\asdim(G_1) \leq \asdim(G_0)+2 \leq 4$.  Now let $G = G_1 * \mathbb Z^k$.  Then since $4 \leq k \leq m$, by the free product formulas for asymptotic and Assouad-Nagata dimension it follows that $\asdim(G) = k$ and $\asdimAN(G) = m$.  Note that $B_0$ and $B_0 \times \mathbb Z$ are both subgroups of $G$.  Therefore, let
$$H =
\begin{cases}
B_0 \times \mathbb Z &\text{ if } \asdimAN(B_0) = n-1\\
B_0 &\text{ if } \asdimAN(B_0) = n \, .
\end{cases}$$
Again by the Morita theorem, we have that $\asdimAN(H) = n$, and $H \leqslant G$.  This completes the proof.
\end{proof}

\section{A concrete example}

In this section we construct an example of a group of the sort described in \cref{mainProp}.  In doing so, we show that such a group can be given by an explicit presentation, i.e. is recursively presented.  The following lemma shows one way of constructing $C'(\lambda)$ languages, which was used by Bowditch in \cite{Bowditch2} to construct $2^{\aleph_0}$ small cancellation groups in distinct quasi-isometry classes.

\begin{lemma}\label{LConstruction}
Let $U = \{u_i \mid i \in \mathbb N\} \subset \{a,x\}_\circ^*$ be a language where we define
$$u_i = (a^{m_i}x^{m_i})^{n_i}$$
for some positive integers $m_i, n_i$, for each $i \in \mathbb N$.  Let $k \geq 2$ be an integer, and suppose that all of the following conditions hold.
\begin{enumerate}[label=\normalfont(\alph*)]
\item $n_i \geq k$ for all $i \in \mathbb N$.
\item $m_i \neq m_{i'}$ for all distinct $i, i' \in \mathbb N$.
\end{enumerate}
Then all of the following conclusions hold for all $i \in \mathbb N$.
\begin{enumerate}[label=\normalfont(\roman*)]
\item $U$ is cyclically minimal and cyclically reduced, and satisfies $C'\left(\frac{1}{k-1}\right)$.
\item For all $h \in \mathbb Z$, if $p$ is a piece of $x^h$ and $u_i$, then $|p| < \frac{1}{k-1}|u_i|$.
\item $2 \leq |u_i|$.
\item If $u_i = u_{i'}$ then $i=i'$.
\end{enumerate}
\end{lemma}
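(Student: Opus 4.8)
The plan is to exploit the rigid \emph{run structure} of the words $u_i = (a^{m_i}x^{m_i})^{n_i}$: writing each word over $\{a,x\}_\circ$ as a concatenation of maximal constant runs, every cyclic shift of $u_i$ is a word using only $a,x$, has $2n_i$ or $2n_i+1$ runs, each of length at most $m_i$, and every run \emph{other than its first and last} has length exactly $m_i$; dually, every cyclic shift of $u_i^{-1}$ is a word using only $a^{-1},x^{-1}$ with the same run-length data. From this, conclusions (ii)--(iv) and the ``cyclically minimal and cyclically reduced'' half of (i) follow readily. Indeed, $|u_i| = 2m_in_i \geq 2$, giving (iii). If $u_i = u_{i'}$ then their initial $a$-runs have the same length, so $m_i = m_{i'}$ and hence $i = i'$ by (b), giving (iv). Every word in $\{u_i\}_*$ is reduced, since a nonempty word using only $a,x$ (or only $a^{-1},x^{-1}$) admits no cancellation and the wraparound junction of a cyclic shift of $u_i$ reads $x^{m_i}a^{m_i}$; thus $U$ is cyclically reduced. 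And $U$ is cyclically minimal: a word using only $a,x$ is never a cyclic shift of one using only $a^{-1},x^{-1}$, while if $u_{i'}$ (with $i' \neq i$) were a cyclic shift of $u_i$, then comparing an interior run of $u_{i'}$ (which exists since $u_{i'}$ has at least $2n_{i'} \geq 4$ runs) would force $m_i = m_{i'}$, contradicting (b). For (ii): $x^{\pm|h|}$ is a single run, a cyclic shift of $u_i$ uses only $a,x$ and one of $u_i^{-1}$ uses only $a^{-1},x^{-1}$, so any piece of $x^h$ and $u_i$ is a run of length at most $m_i$; and $(k-1)m_i < 2n_im_i = |u_i|$ since $n_i \geq k \geq 2$.

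The substance of the lemma is the inequality $|p| < \tfrac{1}{k-1}\min(|u_i|,|u_j|)$ for any piece $p$ witnessed by $u' \in \{u_i\}_*$ and $v' \in \{u_j\}_*$ with $u' \neq v'$, which is the $C'(\tfrac{1}{k-1})$ half of (i). If one of $u',v'$ uses only $a,x$ and the other only $a^{-1},x^{-1}$, then $p = \varepsilon$; so assume both use only $a,x$ (the other case is symmetric). Write $p = r_1\cdots r_\rho$ as a product of maximal runs. Since $p$ is a prefix of $u'$ starting at the first letter of $u'$, each of $r_1,\dots,r_{\rho-1}$ is a complete run of $u'$ with $r_1$ its first run; in particular each of $r_2,\dots,r_{\rho-1}$ is an interior (neither first nor last) run of $u'$, hence has length $m_i$. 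Applying this to $v'$ too, each of $r_2,\dots,r_{\rho-1}$ also has length $m_j$. If $i \neq j$, then $m_i \neq m_j$ forces $\rho \leq 2$; since $|r_1|,|r_\rho| \leq m_i$ and, symmetrically, $|r_1|,|r_\rho| \leq m_j$, this gives $|p| \leq 2\min(m_i,m_j)$. If $i = j$, we instead use that $w := a^{m_i}x^{m_i}$ is primitive, so $u_i = w^{n_i}$ has exactly $2m_i$ distinct cyclic shifts, two of which coincide iff their shift amounts agree modulo $2m_i$; assuming $\rho \geq 2$, the run $r_1$ is the complete first run of both $u'$ and $v'$, and the letter and length of the first run of a cyclic shift of $u_i$ determine its shift amount modulo $2m_i$, so $u' = v'$, a contradiction. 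Hence $\rho \leq 1$ and $|p| \leq m_i$. In every case $|p| \leq 2\min(m_i,m_j)$, and since $|u_i| = 2m_in_i \geq 2km_i$ (likewise for $j$) we conclude $(k-1)|p| \leq 2(k-1)\min(m_i,m_j) < 2k\min(m_i,m_j) \leq \min(|u_i|,|u_j|)$, which is exactly the $C'(\tfrac{1}{k-1})$ estimate.

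The routine parts rest entirely on the run-length dichotomy for cyclic shifts of $u_i$, so the crux — and the step I expect to demand the most care — is the $C'$ estimate: the bookkeeping ensuring that $r_2,\dots,r_{\rho-1}$ are genuinely complete, non-terminal runs of $u'$ (so that they have length exactly $m_i$), together with the $i = j$ subcase, where one must extract the shift amount from $r_1$ rather than compare run lengths.
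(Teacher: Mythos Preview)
Your proof is correct and follows essentially the same run-structure analysis as the paper's argument. In fact you are more careful: the paper's proof only treats the case of distinct indices $i \neq i'$ when verifying $C'\bigl(\tfrac{1}{k-1}\bigr)$, whereas the definition (as stated in the paper) also requires bounding pieces between two \emph{distinct} cyclic shifts of the same $u_i$; your primitivity argument for $a^{m_i}x^{m_i}$ (showing the first complete run determines the shift modulo $2m_i$) fills this gap cleanly.
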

\begin{proof}
If $u_i \in U$, then no cyclic shift of $u_i^{-1}$ is in $U$: if $\tilde u_i$ is a cyclic shift of $u_i$ that belongs to $U$, then $|\tilde u_i| = |u_i|$ and $\tilde u_i$ must begin with $a$ and end with $x$, in which case $\tilde u_i = u_i$.  Therefore $U$ is cyclically minimal.  Since all $u_i$ are positive words (that is, do not contain letters $a^{-1}$ or $x^{-1}$), it is clear that $U$ is cyclically reduced.  For the same reason, when talking about pieces of some $u_i$ and another positive word $w$, it suffices to consider only cyclic shifts of $u_i$ and $w$, and we may ignore cyclic shifts of $u_i^{-1}$ or $w^{-1}$.  To show that $U$ satisfies $C'(\frac{1}{k-1})$, suppose $i, i' \in \mathbb N$ are distinct.  Let $p$ be a maximal piece of $u_i$ and $u_{i'}$.  Since $m_i \neq m_{i'}$, suppose without loss of generality that $m_i < m_{i'}$.  Then $p$ must have the form $a^{m_i}x^{m_i}$.  But then $n_i|p| \leq |u_i|$ and $n_{i'}|p| \leq |u_{i'}|$.  Since $n_i, n_{i'} \geq k$, we have $|p| \leq \frac{1}{k}\min(|u_i|, |u_{i'}|) < \frac{1}{k-1}\min(|u_i|, |u_{i'}|)$.  Therefore $U$ satisfies $C'\left(\frac{1}{k-1}\right)$.  Conclusion (ii) says only that any power of $x$ makes up less than $\frac{1}{k-1}$ of a cyclic shift of some $u_i$.  But a maximal subword of a cyclic shift of $u_i$ of the form $x^h$ must be $x^{m_i}$, which has length at most $\frac{1}{2n_i}|u_i| \leq \frac{1}{2k}|u_i| < \frac{1}{k-1}|u_i|$, so this is clear.  Parts (iii) and (iv) are obvious.
\end{proof}

\begin{lemma}\label{technicalLanguageConstruction}
Let $m \in \mathbb Z^+ \cup \{\infty\}$, and let $\mathcal P_m = \{P_{(m,j)} \mid j \in \mathbb N\}$ be the partition of $\mathbb N$ given in \cref{partitions}.  Let $k \geq 2$ be an integer.  For each $i \in \mathbb N$, let $r_i = i - \min(P_{(m,j)})$ whenever $i \in P_{(m,j)}$.  Let $(p_j)$ be an increasing sequence of positive integers.   Let $U = \{u_i \mid i \in \mathbb N\} \subset \{a,x\}_\circ^*$ be given by
$$u_i = \left(a^{k^{(p_j-r_i)}}x^{k^{(p_j-r_i)}}\right)^{k^{(r_i+1)}}$$
whenever $i \in P_{(m,j)}$.  Let $(\ell_i)$ be an increasing sequence of positive integers.  Suppose that the sequence $(p_j)$ satisfies
\begin{equation}\label{p_jcondition}
\begin{split}
p_{j+1} \geq p_j+\log_k(\ell_{(j+1)m})+|P_{(m,j+1)}| \text{ if } m \in \mathbb Z^+\\
 p_{j+1} \geq p_j+\log_k(\ell_{(j+1)^2})+|P_{(m,j+1)}| \text{ if } m = \infty \, .
\end{split}
\end{equation}
Then all of the following conclusions hold for all $i \in \mathbb N$.
\begin{enumerate}[label=\normalfont(\roman*)]
\item $U$ is cyclically minimal and cyclically reduced, and satisfies $C'\left(\frac{1}{k-1}\right)$.
\item For all $h \in \mathbb N$, if $p \in \{a,x\}_\circ^*$ is a piece of $x^h$ and $w_i$, then $|p| < \frac{1}{k-1}|u_i|$.
\item $2 \leq |u_i|$.
\item If $u_i = u_{i'}$, then $i = i'$.
\item The sequence of word lengths $(|u_i|)$ is constant on blocks of $\mathcal P_m$.
\item If $m \in \mathbb Z^+$ then $|u_{(j+1)m}| \geq \ell_{(j+1)m}|u_{jm}|$, and if $m = \infty$ then $|u_{(j+1)^2}| \geq \ell_{(j+1)^2}|u_{j^2}|$.

\end{enumerate}
\end{lemma}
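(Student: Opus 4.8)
The plan is to obtain conclusions (i)--(iv) as immediate consequences of \cref{LConstruction}, and to establish (v) and (vi) by a direct computation of the word lengths $|u_i|$.

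First I would rewrite the words in the form demanded by \cref{LConstruction}: for $i \in P_{(m,j)}$, set $m_i = k^{p_j - r_i}$ and $n_i = k^{r_i+1}$, so that $u_i = (a^{m_i} x^{m_i})^{n_i}$. Before invoking \cref{LConstruction} I would check that $p_j - r_i \geq 0$, so that $m_i$ is a genuine positive integer: for $j \geq 1$ condition (\ref{p_jcondition}) already forces $p_j \geq |P_{(m,j)}| + 1 > r_i$, and for $j = 0$ this is automatic when $m = \infty$ (where $|P_{(\infty,0)}| = 1$) and holds for finite $m$ provided $p_0$ is taken at least $m - 1$, which one may always arrange. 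Hypothesis (a) of \cref{LConstruction} is then clear since $n_i = k^{r_i+1} \geq k$. The one substantive point is hypothesis (b), that $m_i \neq m_{i'}$ for distinct $i, i'$, i.e. that the exponents $p_j - r_i$ are pairwise distinct over all $i \in \mathbb N$. Within a single block $P_{(m,j)}$ the parameter $r_i$ ranges over $\{0, 1, \ldots, |P_{(m,j)}| - 1\}$, so the exponents occurring there fill the integer interval $[\,p_j - |P_{(m,j)}| + 1,\; p_j\,]$. Since $\ell_i \geq 1$ gives $\log_k(\ell_{\bullet}) \geq 0$, (\ref{p_jcondition}) yields $p_{j+1} \geq p_j + |P_{(m,j+1)}|$, so the least exponent arising from block $j+1$ is strictly larger than $p_j$, the greatest exponent arising from block $j$. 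Inducting on $j$, these intervals are pairwise disjoint, hence all the $m_i$ are distinct. With hypotheses (a) and (b) in hand, \cref{LConstruction} (applied with $\lambda = \tfrac{1}{k-1}$) yields (i), (iii), and (iv) verbatim, and also (ii) (reading the evident typo $w_i$ as $u_i$, and noting $\mathbb N \subseteq \mathbb Z$).

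For (v) and (vi) I would simply compute: for $i \in P_{(m,j)}$,
$$|u_i| = 2 m_i n_i = 2\, k^{p_j - r_i}\, k^{r_i + 1} = 2\, k^{p_j + 1},$$
which depends only on the block index $j$; this is precisely (v). For (vi), since $jm = \min P_{(m,j)}$ (resp. $j^2 = \min P_{(\infty,j)}$) we have $r_{jm} = 0$ (resp. $r_{j^2} = 0$), so $|u_{jm}| = 2k^{p_j + 1}$ and $|u_{(j+1)m}| = 2k^{p_{j+1}+1}$, whence
$$\frac{|u_{(j+1)m}|}{|u_{jm}|} = k^{\,p_{j+1} - p_j} \;\geq\; k^{\,\log_k(\ell_{(j+1)m}) + |P_{(m,j+1)}|} \;\geq\; \ell_{(j+1)m},$$
using (\ref{p_jcondition}) and $|P_{(m,j+1)}| \geq 0$; the $m = \infty$ case is identical after replacing $(j+1)m, jm$ by $(j+1)^2, j^2$.

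The only step that requires any care is the verification of hypothesis (b) of \cref{LConstruction} --- the disjointness of the exponent intervals of consecutive blocks --- and the term $|P_{(m,j+1)}|$ appearing in (\ref{p_jcondition}) was included precisely to make that work; the remainder is bookkeeping.
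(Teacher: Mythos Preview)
Your proposal is correct and follows essentially the same approach as the paper: both reduce (i)--(iv) to \cref{LConstruction} by setting $m_i = k^{p_j - r_i}$, $n_i = k^{r_i+1}$, verify hypothesis (b) there by showing the exponents $p_j - r_i$ form disjoint integer intervals as $j$ varies (using the $|P_{(m,j+1)}|$ term in (\ref{p_jcondition})), and obtain (v), (vi) from the direct computation $|u_i| = 2k^{p_j+1}$. You are in fact slightly more careful than the paper in flagging the need for $p_0 \geq m-1$ so that $m_i$ is a positive integer; the paper leaves this implicit.
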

\begin{proof}
Note that, if $i \in P_{(m,j)}$, then $|u_i| = 2 k^{p_j-r_i}k^{r_i+1} = 2k^{p_j+1} \geq 2$, which depends only on $j$.  This establishes (iv) and (v).  Define the sequence $s=(s_j)$ by
$$s_j = 2k^{p_j+1}$$
for all $j \in \mathbb N$.  Then $|u_i| = (m \times s)_i$.  

For (vi), note that $\log_k(s_{(j+1)m}) = \log_k(2) + p_{j+1}+1$.  If $m \in \mathbb Z^+$, then we have $p_{j+1} \geq p_j+\log_k(\ell_{(j+1)m})$, implying that $s_{j+1} \geq \ell_{(j+1)m}s_j$ for all $j \in \mathbb N$.  If $m = \infty$, then $\log_k(s_{j+1}) \geq p_{j+1} \geq p_j+\log_k(\ell_{(j+1)^2})$, so $s_{j+1} \geq \ell_{(j+1)^2}s_j$ for all $j \in \mathbb N$.  This establishes (vi).

For parts (i)-(iv), we use \cref{LConstruction}.  Obviously part (a) of \cref{LConstruction} is satisfied, so we only need to check part (b).  For this is suffices to show that if $i \in P_{(m,j)}, i' \in P_{(m,j')}$, and $i \neq i'$, then  $p_j-r_i \neq p_{j'}-r_{i'}$.  If $j'=j$ then this is immediate.  If $j' = j+1$ then we have
$$p_{j'}-r_{i'} = p_{j+1}-r_{i'} \geq p_{j+1}-|P_{j+1}|+1 > p_j \geq p_j-r_i \, .$$
This shows that $p_j-r_i$ increases with $j$ no matter the choice of $i \in P_{(m,j)}$, so we are done.
\end{proof}

We are ready to construct our example.  Let $m, n \in \mathbb Z^+ \cup \{\infty\}$ with $m < n$.  Let $S_A = \{a,x\}$, $S_B = \{b,y\}$ be disjoint two-element alphabets.  Let $k=14$ and let $\ell_i = 14^i$ for all $i \in \mathbb N$.  Let $(p_j), (q_j)$ be increasing sequences of positive integers.  Let $U_A = \{u_i \mid i \in \mathbb N\} \subset (S_A)_\circ^*$ be the language constructed with respect to $m,k, (\ell_i)$ and $(p_j)$ as in \cref{technicalLanguageConstruction}.  Similarly define $V_B = \{v_i \mid i \in \mathbb N\} \subset (S_B)_\circ^*$ with respect to $n,k, (\ell_i)$, and $(q_j)$.

\begin{lemma}\label{pqSequenceConstruction}
Suppose that for all $i,j \in \mathbb N$ we have
\begin{enumerate}[label=\normalfont(\alph*)]
\item $p_{j+1} \geq p_j + (j+2)m$.
\item $q_{j+1} \geq q_j + (j+2)n$ if $n \in \mathbb Z^+$, and $q_{j+1} \geq q_j + (j+2)^2$ if $n=\infty$.
\item $p_{\lfloor i/m \rfloor} \leq q_{\lfloor i/n \rfloor}$ if $n \in \mathbb Z^+$, and $p_{\lfloor i/m \rfloor} \leq q_{\lfloor \sqrt{i} \rfloor}$ if $n=\infty$.
\end{enumerate}
Then $U_A, V_B$ satisfy conditions \textnormal{(a)-(f)} listed in the proof of \cref{mainProp}.
\end{lemma}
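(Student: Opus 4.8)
The plan is to take $\lambda = \tfrac{1}{13}$ (so that $0 < \lambda < \tfrac{1}{12}$, and $\lambda = \tfrac{1}{k-1}$ for $k = 14$), verify that the growth hypothesis (\ref{p_jcondition}) of \cref{technicalLanguageConstruction} is met by $(p_j)$ for the data $(m,k,(\ell_i))$ and by $(q_j)$ for the data $(n,k,(\ell_i))$, and then read the conditions (a)--(f) listed in the proof of \cref{mainProp} off conclusions (i)--(vi) of \cref{technicalLanguageConstruction}, using hypothesis (c) of the present lemma for the single inequality $|u_i| \le |v_i|$ that is not immediate.

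First I would dispatch the growth hypotheses. Because $\ell_i = 14^i = k^i$, we have $\log_k(\ell_{(j+1)m}) = (j+1)m$, and because $m < n$ forces $m \in \mathbb Z^+$ we have $|P_{(m,j+1)}| = m$; hence (\ref{p_jcondition}) reads $p_{j+1} \ge p_j + (j+1)m + m = p_j + (j+2)m$, which is exactly hypothesis (a). For $(q_j)$: if $n \in \mathbb Z^+$ then $\log_k(\ell_{(j+1)n}) + |P_{(n,j+1)}| = (j+1)n + n = (j+2)n$, while if $n = \infty$ then $\log_k(\ell_{(j+1)^2}) + |P_{(\infty,j+1)}| = (j+1)^2 + (2j+3) = (j+2)^2$; in either case (\ref{p_jcondition}) is precisely hypothesis (b). Since $(p_j)$ and $(q_j)$ are increasing sequences of positive integers, $(\ell_i)$ is increasing, and $k \ge 2$, \cref{technicalLanguageConstruction} now applies and yields (i)--(vi) for $U_A$ (relative to $\mathcal P_m$) and for $V_B$ (relative to $\mathcal P_n$).

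Then I would translate conclusion by conclusion. Condition (a) is conclusion (i) for $U_A$ and $V_B$ together with $\lambda = \tfrac{1}{13} < \tfrac{1}{12}$ (and, since $S_A \cap S_B = \emptyset$, there is nothing further to check for the combined system). Condition (b) follows by taking the nonempty generator $y \in S_B$: conclusion (ii) for $V_B$ gives $|p| < \tfrac{1}{13}|v_i| = \lambda|v_i|$ for any piece $p$ of $y^h$ and $v_i$, and since the $v_i$ are positive words the statement for $h \in \mathbb Z$ reduces to that for $h \in \mathbb N$. For condition (c): the bound $2 \le |u_i|$ is conclusion (iii) for $U_A$; and since $|u_i| = 2k^{p_{\lfloor i/m\rfloor}+1}$ while $|v_i| = 2k^{q_J+1}$ with $J = \lfloor i/n\rfloor$ (or $J = \lfloor\sqrt i\rfloor$ when $n = \infty$), the inequality $|u_i| \le |v_i|$ is equivalent to $p_{\lfloor i/m\rfloor} \le q_J$, which is hypothesis (c). Condition (d) is conclusion (iv) for $U_A$ and for $V_B$; condition (e) is conclusion (v) for each; and condition (f) is conclusion (vi) for each. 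This exhausts (a)--(f).

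The argument is essentially bookkeeping. The only content-bearing observations are the arithmetic identities $\log_k(\ell_{(j+1)m}) + |P_{(m,j+1)}| = (j+2)m$ and $(j+1)^2 + (2j+3) = (j+2)^2$, which make hypotheses (a) and (b) line up with (\ref{p_jcondition}) exactly, and the remark that the relevant block indices of $i$ in $\mathcal P_m$ and $\mathcal P_n$ are $\lfloor i/m\rfloor$ and $\lfloor i/n\rfloor$ (respectively $\lfloor\sqrt i\rfloor$), which is what converts hypothesis (c) into $|u_i| \le |v_i|$. I do not anticipate any genuine obstacle.
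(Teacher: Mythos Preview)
Your proposal is correct and follows essentially the same route as the paper: verify that hypotheses (a) and (b) imply the growth condition (\ref{p_jcondition}) of \cref{technicalLanguageConstruction} for $(p_j)$ and $(q_j)$ respectively, then read off conditions (a)--(f) of \cref{mainProp} from conclusions (i)--(vi), using hypothesis (c) only for the inequality $|u_i|\le |v_i|$. Your arithmetic in the $n=\infty$ case is in fact slightly sharper than the paper's: you correctly compute $|P_{(\infty,j+1)}|=2j+3$ and obtain the exact equality $(j+1)^2+(2j+3)=(j+2)^2$, whereas the paper writes $2j+1$ (the size of $P_{(\infty,j)}$ rather than $P_{(\infty,j+1)}$) and records only an inequality.
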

\begin{proof}
Note that
\begin{align*}
\log_k(\ell_{(j+1)n})+|P_{(n,j+1)}| &= \log_{14}(14^{(j+1)n})+n = (j+2)n &&\text{ if } n \in \mathbb Z^+\\
\log_k(\ell_{(j+1)^2})+|P_{(n,j+1)}| &= \log_{14}(14^{(j+1)^2})+ (2j+1) \leq (j+2)^2 &&\text{ if } n = \infty \, .
\end{align*}
Therefore assumptions (a) and (b) guarantee that $(p_j)$ and $(q_j)$ satisfy (\ref{p_jcondition}) with respect to $(\ell_i)$ and $m,n$, respectively, and so $U_A, V_B$ satisfy all conditions listed in the proof of \cref{mainProp}, except possibly that $|u_i| \leq |v_i|$ for all $i \in \mathbb N$.  Now, if $i \in P_{(n,j)} \in \mathcal P_n$, then $j = \lfloor i/n \rfloor$ if $n \in \mathbb Z^+$, and $j = \lfloor \sqrt{i} \rfloor$ if $n = \infty$.  It follows that assumption (c) is necessary and sufficient to guarantee that $|u_i| \leq |v_i|$ for all $i \in \mathbb N$.
\end{proof}

\begin{example}
Let
\begin{align*}
p_j &= m(j+2)^2
&& q_j =
\begin{cases}
n^2(j+3)^2 &\text{ if }n \in \mathbb Z^+\\
m(j+3)^4 &\text{ if } n = \infty.
\end{cases}
\end{align*}
Then $(p_j), (q_j)$ satisfy the hypotheses of  \cref{pqSequenceConstruction}.  The verification of this is no more than a tedious calculation, so we omit it.  Note that, in the notation of \cref{technicalLanguageConstruction},
$$r_i = 
\begin{cases}
i \mod n &\text{ if } n \in \mathbb Z^+\\
i^2-\lfloor \sqrt i \rfloor^2 &\text{ if } n = \infty \, .
\end{cases}$$
Also, if $i \in P_{(n,j)}$, then $j = \lfloor i / n \rfloor$ if $n \in \mathbb Z^+$, and $j = \lfloor \sqrt i \rfloor$ if $n = \infty$.  So, expanding the forms of $u_i$ and $v_i$ according to \cref{technicalLanguageConstruction} with respect to the sequences $(p_j)$ and $(q_j)$ given above yields

\begin{align*}
u_i &= \left(a^{14^{m(\lfloor i/m \rfloor+2)^2-(i\text{ mod } m)}}x^{14^{m(\lfloor i/m \rfloor+2)^2-(i\text{ mod } m)}}\right)^{14^{(i \text{ mod } m) +1}}\\
v_i &= 
\begin{cases}
\left(b^{14^{n^2(\lfloor i/n \rfloor+3)^2-(i \text{ mod } n)}}y^{14^{n^2(\lfloor i/n \rfloor+3)^2-(i \text{ mod } n)}}\right)^{14^{(i \text{ mod } n)+1}} & \text{ if } n \in \mathbb Z^+\\
\left(b^{14^{m(\lfloor \sqrt i \rfloor+3)^4-(i-\lfloor \sqrt i \rfloor^2)}}y^{14^{m(\lfloor \sqrt i \rfloor+3)^4-(i-\lfloor \sqrt i \rfloor^2)}}\right)^{14^{(i-\lfloor \sqrt i \rfloor^2)+1}} &\text{ if } n = \infty \, .
\end{cases} 
\end{align*}
Then the languages $\{u_i \mid i \in \mathbb N\}$ and $\{v_i \mid i \in \mathbb N\}$ satisfy conditions (a)-(f) listed in the proof of \cref{mainProp}, and are clearly recursive.  Thus the group $G$ with presentation
$$G = \langle a, b, x, y \mid [a,u_i], [x,u_i], [b,u_i], [y,u_i], u_i^{14^i}, u_iv_i^{-1} : i \in \mathbb N \rangle$$
is a finitely generated, recursively presented group of Assouad-Nagata dimension at most $m+2$, containing a finitely generated subgroup of Assouad-Nagata dimension at least $n+1$.

\end{example}

\pagebreak

\begin{bibdiv}
\begin{biblist}

\bib{Sledd}{article}{
  title={Assouad-Nagata dimension of finitely generated $C'(1/6)$ groups},
  author={Sledd, L.},
  journal={arXiv preprint 1909.06646},
  year={2019}
}

\bib{Higes2}{article}{
  title={Assouad-Nagata dimension of nilpotent groups with arbitrary left invariant metrics},
  author={Higes, J.},
  journal={Proceedings of the American Mathematical Society},
  volume={138},
  number={6},
  pages={2235--2244},
  year={2010}
}

\bib{Brodskiy_Dydak_Lang}{article}{
    AUTHOR = {Brodskiy, N.}
    AUTHOR = {Dydak, J.}
    AUTHOR = {Lang, U.},
     TITLE = {Assouad-{N}agata dimension of wreath products of groups},
   JOURNAL = {Canad. Math. Bull.},
  FJOURNAL = {Canadian Mathematical Bulletin. Bulletin Canadien de
              Math\'{e}matiques},
    VOLUME = {57},
      YEAR = {2014},
    NUMBER = {2},
     PAGES = {245--253},
      ISSN = {0008-4395},
   MRCLASS = {54F45 (20E22 54C65 55M10)},
  MRNUMBER = {3194169},
MRREVIEWER = {Takahisa Miyata},
       DOI = {10.4153/CMB-2013-024-8},
       URL = {https://doi-org.proxy.library.vanderbilt.edu/10.4153/CMB-2013-024-8},
}

\bib{Brodskiy_etal}{article}{
  AUTHOR = {Brodskiy, N.},
  AUTHOR = {Dydak, J.},
  AUTHOR = {Levin, M.},
  AUTHOR = {Mitra, A.},
     TITLE = {A {H}urewicz theorem for the {A}ssouad-{N}agata dimension},
   JOURNAL = {J. Lond. Math. Soc. (2)},
  FJOURNAL = {Journal of the London Mathematical Society. Second Series},
    VOLUME = {77},
      YEAR = {2008},
    NUMBER = {3},
     PAGES = {741--756},
      ISSN = {0024-6107},
   MRCLASS = {54F45 (18B30 20F99 54E35)},
  MRNUMBER = {2418302},
MRREVIEWER = {Klaas Pieter Hart},
       DOI = {10.1112/jlms/jdn005},
       URL = {https://doi.org/10.1112/jlms/jdn005},
}

\bib{Roe}{book}{
  title={Index theory, coarse geometry, and topology of manifolds},
  author={Roe, J.},
  volume={90},
  year={1996},
  publisher={American Mathematical Soc.}
}

\bib{Bell_Dranishnikov}{article}{
    AUTHOR = {Bell, G. C.},
    AUTHOR = {Dranishnikov, A. N.},
     TITLE = {Asymptotic dimension},
   JOURNAL = {Topology Appl.},
  FJOURNAL = {Topology and its Applications},
    VOLUME = {155},
      YEAR = {2008},
    NUMBER = {12},
     PAGES = {1265--1296},
      ISSN = {0166-8641},
   MRCLASS = {55M10 (20F69)},
  MRNUMBER = {2423966},
MRREVIEWER = {Piotr W. Nowak},
       DOI = {10.1016/j.topol.2008.02.011},
       URL = {https://doi.org/10.1016/j.topol.2008.02.011},
}

\bib{Dranishnikov_Smith2}{article}{
    AUTHOR = {Dranishnikov, A.},
    AUTHOR = {Smith, J.},
     TITLE = {Asymptotic dimension of discrete groups},
   JOURNAL = {Fund. Math.},
  FJOURNAL = {Fundamenta Mathematicae},
    VOLUME = {189},
      YEAR = {2006},
    NUMBER = {1},
     PAGES = {27--34},
      ISSN = {0016-2736},
   MRCLASS = {20F69},
  MRNUMBER = {2213160},
       DOI = {10.4064/fm189-1-2},
       URL = {https://doi-org.proxy.library.vanderbilt.edu/10.4064/fm189-1-2},
}

\bib{Higes}{article}{
    AUTHOR = {Higes, J.},
     TITLE = {Assouad-{N}agata dimension of locally finite groups and
              asymptotic cones},
   JOURNAL = {Topology Appl.},
  FJOURNAL = {Topology and its Applications},
    VOLUME = {157},
      YEAR = {2010},
    NUMBER = {17},
     PAGES = {2635--2645},
      ISSN = {0166-8641},
   MRCLASS = {54F45},
  MRNUMBER = {2725356},
MRREVIEWER = {Rainer L\"{o}wen},
       DOI = {10.1016/j.topol.2010.07.015},
       URL = {https://doi.org/10.1016/j.topol.2010.07.015},
}

\bib{Olshanskii_Osin_Sapir}{article}{
  title={Lacunary hyperbolic groups},
  author={Ol’shanskii, A. Yu.}
  author={Osin, D. V.}
  author={Sapir, M. V.},
  journal={Geometry \& Topology},
  volume={13},
  number={4},
  pages={2051--2140},
  year={2009},
  publisher={Mathematical Sciences Publishers}
}

\bib{Lyndon_Schupp}{book}{
    AUTHOR = {Lyndon, R. C.},
    AUTHOR = {Schupp, P. E.},
     TITLE = {Combinatorial group theory},
    SERIES = {Classics in Mathematics},
      NOTE = {Reprint of the 1977 edition},
 PUBLISHER = {Springer-Verlag, Berlin},
      YEAR = {2001},
     PAGES = {xiv+339},
      ISBN = {3-540-41158-5},
   MRCLASS = {20Fxx (20Exx 57M07)},
  MRNUMBER = {1812024},
       DOI = {10.1007/978-3-642-61896-3},
       URL = {https://doi.org/10.1007/978-3-642-61896-3},
}

\bib{Olshanskii}{book}{
  title={Geometry of defining relations in groups},
  author={Ol'shanskii, A. Yu.},
  volume={70},
  year={2012},
  publisher={Springer Verlag}
}

\bib{Bell_Dranishnikov2}{article}{
    AUTHOR = {Bell, G. C.}
    AUTHOR = {Dranishnikov, A. N.},
     TITLE = {A {H}urewicz-type theorem for asymptotic dimension and
              applications to geometric group theory},
   JOURNAL = {Trans. Amer. Math. Soc.},
  FJOURNAL = {Transactions of the American Mathematical Society},
    VOLUME = {358},
      YEAR = {2006},
    NUMBER = {11},
     PAGES = {4749--4764},
      ISSN = {0002-9947},
   MRCLASS = {20F69 (20E08 20F65 57M07)},
  MRNUMBER = {2231870},
MRREVIEWER = {Athanase Papadopoulos},
       DOI = {10.1090/S0002-9947-06-04088-8},
       URL = {https://doi.org/10.1090/S0002-9947-06-04088-8},
}

\bib{Dranishnikov_Smith}{article}{
  title={On asymptotic Assouad--Nagata dimension},
  author={Dranishnikov, A. N.}
  author={Smith, J.},
  journal={Topology and its Applications},
  volume={154},
  number={4},
  pages={934--952},
  year={2007},
  publisher={Elsevier}
}

\bib{Dranishnikov}{article}{
    AUTHOR = {Dranishnikov, A. N.},
     TITLE = {On asymptotic dimension of amalgamated products and
              right-angled {C}oxeter groups},
   JOURNAL = {Algebr. Geom. Topol.},
  FJOURNAL = {Algebraic \& Geometric Topology},
    VOLUME = {8},
      YEAR = {2008},
    NUMBER = {3},
     PAGES = {1281--1293},
      ISSN = {1472-2747},
   MRCLASS = {20F65 (20F55 20F69)},
  MRNUMBER = {2443244},
MRREVIEWER = {Goulnara N. Arzhantseva},
       DOI = {10.2140/agt.2008.8.1281},
       URL = {https://doi.org/10.2140/agt.2008.8.1281},
}

\bib{Brodskiy_Higes}{article}{
  title={Assouad-Nagata dimension of tree-graded spaces},
  author={Brodskiy, N.}
  author={Higes, J.},
  journal={arXiv preprint 0910.2378},
  year={2009}
}

\bib{Bowditch2}{article}{
  title={Continuously many quasiisometry classes of 2-generator groups},
  author={Bowditch, B. H.},
  journal={Commentarii Mathematici Helvetici},
  volume={73},
  number={2},
  pages={232--236},
  year={1998}
}

\end{biblist}
\end{bibdiv}
\end{document}